\newtheorem{teo}{Theorem}[section]
\newtheorem{lema}[teo]{Lemma}
\newtheorem{prop}[teo]{Proposition}
\newtheorem{defi}{Definition}[section]
\newtheorem{cor}[teo]{Corollary}
\newtheorem{rem}{Remark}[section]
\renewcommand{\theequation}{\arabic{section}.\arabic{equation}}
\begin{document}

\title{Blow-up for a double nonlocal heat equation}

\author{R. Ferreira and A. de Pablo}

\address{Ra\'{u}l Ferreira
\hfill\break\indent  Departamento de An\'alisis Matem\'atico y Matem\'{a}tica Aplicada,
\hfill\break\indent Universidad Complutense de Madrid,
\hfill\break\indent
 28040 Madrid, Spain.
\hfill\break\indent  e-mail: {\tt raul$_-$ferreira@mat.ucm.es}}

\address{Arturo de Pablo
\hfill\break\indent  Departamento de Matem\'{a}ticas,
\hfill\break\indent Universidad Carlos III de Madrid,
\hfill\break\indent
 28911 Legan\'{e}s, Spain.
\hfill\break\indent  and ICMAT, Instituto de Ciencias Matem\'aticas (CSIC-UAM-UC3M-UCM),
\hfill\break\indent
 28049 Madrid , Spain.
 \hfill\break\indent  e-mail: {\tt arturop@math.uc3m.es}}

\

\begin{abstract}
We study the blow-up question for the diffusion equation
$$
(\mathcal{D}_t +\mathcal{L}_x) u=u^p,\qquad x\in\mathbb{R}^N,\;0<t<T,
$$
with $p>1$, where $\mathcal{D}_t$ is a nonlocal derivative in time defined by convolution with a nonnegative and nonincreasing kernel, and $\mathcal{L}_x$ is a nonlocal operator in space driven by a nonnegative radial L\'evy kernel.  We show that the existence of solutions that blow up in finite time or exist globally depends only on the behaviour of  the spatial kernel at infinity.  A main difficulty of the work stems from estimating the fundamental pair defining the solution through a Duhamel formula, due to the generality of the setting, which includes singular or not, at the origin, spatial kernels, that can be either positive or compactly supported.

As a byproduct we obtain that the Fujita exponent for the fractional type operators $\mathcal{D}_t\sim\partial_t^\alpha$, $0<\alpha<1$, and $\mathcal{L}_x\sim(-\Delta)^{\beta/2}$, $0<\beta<2$ (the Caputo fractional derivative and the fractional Laplacian, resp.), is $p_*=1+\beta/N$, provided $N<2\beta$ when $\mathcal{D}_t\neq\partial_t^\alpha$.

Keywords: fractional heat equation, L\'evy kernels, Caputo derivative, fractional Laplacian, blow-up, Fujita exponent.
\end{abstract}

\maketitle



\section{Introduction}

\label{sect-introduction} \setcounter{equation}{0}

We consider non-negative bounded solutions to the following double nonlocal problem
\begin{equation}\label{eq.principal}
\left\{
\begin{array}{ll}
(\mathcal{D}_t +\mathcal{L}_x) u=u^p,\qquad & x\in \mathbb R^N,\; t\in(0,T),\\
u(x,0)=u_0(x),
\end{array}\right.
\end{equation}
where $N\ge1$, $p>1$, and $u_0\in L^1(\mathbb{R}^N)\cap L^\infty(\mathbb{R}^N)$ is a given nonnegative function.
Here $0<T\le\infty$ is the maximal time of existence; we denote $Q_T=\mathbb{R}^N\times(0,T)$, $Q=Q_\infty$. The fractional time derivative is defined, for smooth enough functions, by
\begin{equation}\label{Caputo}
\mathcal{D}_t f(t)=\partial_t(\kappa\star(f-f(0)))(t)=\partial_t\left(\int_{0}^t(f(\tau)-f(0))\kappa(t-\tau)\,d\tau\right),
\end{equation}
where the kernel $\kappa$ satisfies
\begin{equation}\label{K0}\tag{K0}
\begin{array}{l}
\kappa\in L^1_{loc}(\mathbb{R}^+), \;\text{nonnegative, nonincreasing},\;\lim\limits_{t\to\infty}\kappa(t)=0, \\ [2mm]
\exists\,\ell\in L^1_{loc}(\mathbb{R}^+), \;\text{nonnegative, nonincreasing with }\;\ell\star\kappa=1.
\end{array}\end{equation}
The symbol $\star$ denotes convolution in time (the functions involved are assumed to vanish for negative times). The existence of the so called \emph{conjugate kernel} $\ell$ is easy to derive, see Section~\ref{subsec-Zt}, but the monotonicity property, which we do not know if it is true in general, is crucial in our arguments.

As to the diffusion operator $\mathcal{L}_x$, it is a nonlocal L\'evy operator defined, for smooth enough functions, by
\begin{equation}\label{operator}
\mathcal{L}_xw(x)=\text{P.V.}\int_{\mathbb{R}^N}(w(x)-w(z))\mathcal{J}(x-z)\,dz,
\end{equation}
where
\begin{equation}\label{J0}\tag{J0}
\begin{array}{c}
\displaystyle\mathcal{J} \;\text{nonnegative, radially symmetric, nonincreasing near infinity}, \; \\ [3mm]
\displaystyle\int_{\mathbb{R}^N}\min\{1,|z|^2\}\mathcal{J}(z)\,dz<\infty,
\end{array}
\end{equation}
and P.V. stands for principal value.

Recently space-time fractional differential equations have been used in lots of applications, such as memory effects,  long-range interactions, anomalous diffusion, quantum mechanics, L\'evy flights, see for instance \cite{Cartea07,delCastillo05,Ezzat11,Metzler00,Zaslavsky02}.

The special power case in the time operator, $\kappa(t)=t^{-\alpha}$ is (a multiple of) the Caputo derivative
\begin{equation}\label{Caputo2}
\partial_t^\alpha f(t)={}_{RL}\partial_t^\alpha (f(t)-f(0))=\dfrac1{\Gamma(1-\alpha)}\partial_t\left(\int_{0}^t\frac{f(\tau)-f(0)}{(t-\tau)^\alpha}\,d\tau\right),
\end{equation}
the standard Riemann-Liouville fractional derivative of order $\alpha\in(0,1)$ of $f(t)-f(0)$.

We want to consider as much general operators as possible. In particular, the basic theory for our equation can be established with the only assumptions on $\kappa$ made above. Nevertheless, in order to characterize the blow-up phenomenon in terms of the power $p$, some homogeneity on the operators must be assumed. We then impose for some results the hypothesis that the time operator behaves like the Caputo fractional derivative, i.e., satisfying,
\begin{equation}
  \label{K1}
  \tag{K1}
c_1t^{-\alpha}\le \kappa(t)\le c_2t^{-\alpha},\qquad t>0, \quad0<\alpha<1,
\end{equation}
or even that $\kappa(t)=\frac1{\Gamma(1-\alpha)}t^{-\alpha}$, i.e., $\mathcal{D}_t=\partial_t^\alpha$, for some other results.

Also, the special case in the spatial operator where $\mathcal{J}(z)=|z|^{-N-\beta}$, $0<\beta<2$, reduces to (a multiple of) the fractional Laplacian,
\begin{equation}\label{fraclap}
(-\Delta)^{\beta/2}w(x)=c_{N,\beta}\int_{\mathbb{R}^N}\frac{w(x)-w(z)}{|x-z|^{N+\beta}}\,dz.
\end{equation}
The normalization constant $c_{N,\beta}$ 
is chosen so that
$$
\left[(-\Delta)^{\beta/2} w\right]\widehat{\,}\,(\xi)= |\xi|^{\beta}\widehat{w}(\xi),
$$
Here $\;\widehat{}\;$ stands for Fourier transform.

On the other hand, if the kernel $\mathcal{J}$ is integrable and for instance $\|\mathcal{J}\|_1=1$, then
\begin{equation}\label{julio}
\mathcal{L}_x w(x)=w(x)-w*\mathcal{J}(x)=w(x)-\int_{\mathbb{R}^N}w(y)\mathcal{J}(x-y)\,dy,
\end{equation}
and
$$
\widehat{\mathcal{L}_x w}(\xi)=(1-\widehat{\mathcal{J}}(\xi))\widehat{w}(\xi).
$$
Here $*$ means convolution in space.

These two operators \eqref{fraclap} and \eqref{julio} are the prototype of two completely different families of nonlocal operators, and their study has been usually been considered separately. We want to consider
nonlocal operators similar to the fractional Laplacian, but also allowing, for some results, operators with kernels on the integrable side. And also kernels with compact support have been considered. This is reflected in the hypotheses concerning the behaviour of the L\'evy kernel $\mathcal{J}$ at zero or at infinity, which determines the behaviour of the symbol $m(\xi)$,
\begin{equation}
  \label{symbol}
  \widehat{\mathcal{L}_x w}(\xi)=m(\xi)\widehat{w}(\xi).
\end{equation}
The behaviour of this symbol is characterized in an Appendix in terms of the kernel $\mathcal{J}$. In particular we consider the following different assumptions:
\begin{equation}
  \label{J1}
  \tag{J1}
  \mathcal{J}(z)\le  c|z|^{-N-\gamma}\qquad\text{for } |z|>1,\quad 0<\gamma\le\infty.
\end{equation}
\begin{equation}
  \label{J2}
  \tag{J2}
  \mathcal{J}(z)\ge  c|z|^{-N-\omega}\qquad\text{for } |z|>1,\quad 0<\omega\le\infty.
\end{equation}
\begin{equation}
  \label{J3}
  \tag{J3}
  \mathcal{J}(z)\ge  c|z|^{-N-\beta}\qquad\text{for } 0<|z|<1,\quad 0<\beta<2.
\end{equation}
The constants $c>0$ in the above conditions are in general different. Hypotheses \eqref{J3} marks the differential character of the spatial diffusion operator, and departs form the nonsingular case $\mathcal{J}\in L^1(\mathbb{R}^N)$; both types of operators are allowed by \eqref{J1} and \eqref{J2}, which deal only with the tail of the kernel. Clearly $\omega\ge\gamma$. If $\mathcal{J}$ has finite second order momentum, which includes the case of compact support, we can take $\gamma=\omega=2$, as we will see below.

A particular case is
\begin{equation}\label{stable-like}
 c_1|z|^{-N-\beta}\le  \mathcal{J}(z)\le  c_2|z|^{-N-\beta}\qquad\text{for any } z\in\mathbb{R}^N\setminus\{0\},
\end{equation}
for some $0<\beta<2$, which implies that the operator behaves as the fractional Laplacian  of order $\beta$, $\mathcal{L}_x\sim(-\Delta)^{\beta/2}$, and it is called \emph{stable-like}.
More precise assumptions will be defined for each result. Hypotheses \eqref{K0} and \eqref{J0} are assumed throughout the paper without further mention.

It can be proved that there exist two functions, $Z_t$ and $Y_t$, such that, if $v_0,f$ are regular enough,  the  function given by Duhamel's type formula
\begin{equation}\label{Duhamel}
v(x,t)=\int_{\mathbb{R}^N}Z_t(x-y)v_0(y)\,dy+\int_0^t\int_{\mathbb{R}^N}Y_{t-\tau}(x-y)f(y,\tau)\,dyd\tau,
\end{equation}
satisfies
\begin{equation}\label{prob-linear}
\left\{
\begin{array}{ll}
(\mathcal{D}_t +\mathcal{L}_x)v=f,\qquad & \text{in } Q,\\
v(x,0)=v_0(x).
\end{array}\right.
\end{equation}
The above formula can be justified using Fourier/Laplace transforms in space and time (denoted by $\;\widehat{}\;/\;\widetilde{}\;$). The kernel $Z_t$ is the fundamental solution of the double nonlocal heat operator $\mathcal{H}=\mathcal{D}_t +\mathcal{L}_x$, while $Y_t$, called the resolvent, is some time differential operator applied to $Z_t$, see \eqref{Z-Y}. The second term is a double convolution, in space and time, but to avoid confusion we only use the notation for time convolution in the proof of some abstract results in Sections~\ref{subsec-Zt}, \ref{subsec-Yt} and \ref{sec-linearproblem}. The pair $(Z_t, Y_t)$, called \emph{the matrix of fundamental solutions}, or \emph{fundamental pair}, of operator $\mathcal{H}$, is defined in a precise way in Section~\ref{sec-fundamentalair}, as well as studied its properties.

Whenever the two terms in formula \eqref{Duhamel} are well defined  we say that $v$ is a \emph{mild solution} to problem \eqref{prob-linear}. This allows to define the concept of mild solution to our problem~\eqref{eq.principal}, see Section~\ref{sec-basictheory}.

\

{\sc Previous results on blow-up}

The story of blow-up for parabolic equations started in the sixties of the previous century  with the study of the semilinear (local) heat equation
\begin{equation}\label{local-eq}
(\partial_t-\Delta) u=u^p.
\end{equation}
See the seminal works of Kaplan \cite{Kaplan63} and Fujita \cite{Fujita66}. For that equation there exist two critical exponents, the global existence exponent $p_0=1$ and the so called \emph{Fujita} exponent $p_*=1+2/N$, such that: $(i)$ all solutions are global in time if $p\le p_0$, $(ii)$ all solutions blow up in finite time if $p_0<p\le p_*$, and $(iii)$ there exist both, global in time solutions and blow-up solutions if $p>p_*$. See also~\cite{Hayakawa73,KobayashiSiraoTanaka77}.  We concentrate on the range $p>1$, since for $p<1$ there are nonuniqueness issues, though it is easy to check that all possible solutions are global in time; the case $p=1$ results on a linear equation, and solutions are also global.

From those results related to equation \eqref{local-eq} a lot of research has been performed for different local diffusion operators, like the $p$--Laplacian or the Porous Medium. See for instance the works~\cite{CazenaveDicksteinWeissler08,DengLevine00} and the review books \cite{Hu18,QuittnerSouplet07,SamarskiiGalaktionovKurdyumovMikhailov95}.

In the fractional derivatives framework we quote the works \cite{CortazarQuirosWolanski24,NagasawaSirao69,Sugitani75,ZhangSun15} for the space fractional, time fractional or even double fractional equation
\begin{equation}\label{double-eq}
(\partial_t^\alpha+(-\Delta)^{\beta/2}) u=u^p,
\end{equation}
$0<\alpha\le1$, $0<\beta\le2$.
Equation~\eqref{local-eq} then corresponds to $\alpha=\beta/2=1$. The papers \cite{NagasawaSirao69,Sugitani75} for one side and \cite{ZhangSun15} for the other consider, respectively, the cases $\alpha=1$ and $\beta=2$, showing that the Fujita exponent is $p_*=1+\beta/N$. In the case $0<\alpha<\beta/2=1$, it is also proved that for the critical exponent $p=1+2/N$ there exist global solutions, contrary to what happens for $\alpha=1$.

The inner case $0<\alpha<1$, $0<\beta<2$ has been recently studied, see the preprint \cite{CortazarQuirosWolanski24}, where the authors obtain the Fujita exponent $p_*=1+\beta/N$, in a more general context of blow-up in $L^q(\mathbb{R}^N)$, $1\le q\le\infty$. They also prove that the exponent $p_*$ belongs to the range where also global solutions exist.

Blow-up for integrable kernels has been studied in \cite{GarciaMelianQuiros10}, in the compactly supported case, and the Fujita exponent is $p^*=1+2/N$, and in~\cite{Alfaro17} for general kernels, where the Fujita exponent is characterized by the finiteness or not of the second moment of the kernel.

Related with the doubly nonlocal operator studied here, but of a different nature, we have studied in the recent paper \cite{FerreiradePablo24} the blow-up problem involving  a power of the heat operator
$$
\left\{
\begin{array}{ll}
(\partial_t-\Delta)^{\sigma} u=u^p,\qquad & (x,t)\in \mathbb R^N\times (0,T),\\
u(x,t)=f(x,t),\qquad & (x,t)\in \mathbb R^N\times (-\infty,0].
\end{array}\right.
$$
We proved that the Fujita exponent is given by $p_*=1+\frac{2\sigma}{N+2(1-\sigma)}$, which is different to the Fujita exponent for problem \eqref{double-eq} with $\alpha=\beta/2=\sigma$. Nevertheless, the above power operator is related to the Marchaud fractional time derivative more than Caputo derivative: all the memory data $f$ for $t<0$ is involved in the evolution.

In this paper we consider the generalization \eqref{eq.principal} to equation~\eqref{double-eq} by introducing the more general kernels \eqref{Caputo}, \eqref{operator}. We show that the existence of blow-up depends only on the tail of the kernel~$\mathcal{J}$, and only on an estimate from above, condition \eqref{J1}. On the contrary, the existence of global solutions depends on the estimate on the tail from below~\eqref{J2}, together with a minimum of singularity of the kernel at the origin, condition~\eqref{J3}, ensuring a smoothing effect. Observe that compactly supported kernels can be considered for both results. Condition~\eqref{K1} is required whenever precise estimates on the functions $Z_t$ and $Y_t$ are needed.

Put
$$
\overline\gamma=\min\{\gamma,2\},\qquad \varpi=\min\{\omega,2\}.
$$
Our main result is the following.
\begin{teo}\label{teo-BUP} \

\begin{itemize}
\item Assume condition \eqref{J1}. If $p>1$ there exist blowing-up solutions to problem \eqref{eq.principal}.
\item Assume also condition \eqref{K1}. If  $1<p< 1+\overline\gamma/N$  then every solution blows up in finite time.
  \item Assume conditions \eqref{K1}, \eqref{J2} and \eqref{J3} with $\beta>\frac{N\varpi}{N+\varpi}$ and $N<2\beta$.
 If $p\ge 1+\varpi/N$  then there exist global solutions.
\item If $\mathcal{D}_t=\partial_t^\alpha$ the result in the previous item holds true for any dimension $N\ge1$. \end{itemize}
\end{teo}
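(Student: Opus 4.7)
The proof splits into two blow-up statements (items 1 and 2) and two global-existence statements (items 3 and 4). All four rely on the Duhamel representation \eqref{Duhamel} and the nonnegativity of the fundamental pair $(Z_t,Y_t)$, which yields a comparison principle for mild solutions, so that lower and upper bounds for $u$ propagate by substitution and iteration.

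For item 1 I would run a Kaplan-type weighted-integral argument. Pick a nonnegative bump $\phi$ with $\int\phi=1$ and satisfying $\mathcal{L}_x\phi\leq\lambda\phi$ pointwise; such a $\phi$ can be built as a smoothed characteristic function of a large ball, and \eqref{J1} is exactly what keeps $\mathcal{L}_x\phi$ under control, since the tail contribution is finite because $\mathcal{J}$ decays at infinity. Setting $F(t)=\int u(x,t)\phi(x)\,dx$, testing the equation against $\phi$ and applying Jensen to $u\mapsto u^p$ yields
$$
\mathcal{D}_tF\geq F^p-\lambda F,
$$
which via the conjugate kernel $\ell\geq 0$ rewrites as $F(t)\geq F(0)+\ell\star(F^p-\lambda F)(t)$. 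For $F(0)$ sufficiently large, iterating this mild inequality produces lower bounds for $F$ that diverge in finite time, giving blow-up. For item 2 the strategy becomes a Fujita-style iteration: the sharpened assumption \eqref{K1} combined with \eqref{J1} delivers a two-sided heat-kernel bound of the form $Z_t(x)\gtrsim t^{-\alpha N/\overline\gamma}$ on a ball of radius $\sim t^{\alpha/\overline\gamma}$. Starting from $u(x,t)\geq (Z_t*u_0)(x)$ for any nontrivial $u_0$ and feeding this lower bound back into the forcing term $\int_0^t Y_{t-s}*u^p\,ds$ of \eqref{Duhamel}, the resulting chain of pointwise lower bounds has time-integrals that diverge precisely when $1<p<1+\overline\gamma/N$, forcing finite-time blow-up for every solution.

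Item 3 is a small-data global existence. Under \eqref{K1}, \eqref{J2} and \eqref{J3}, the symbol estimates from the Appendix give smoothing bounds of the type $\|Z_t*v\|_\infty\lesssim t^{-\alpha N/\varpi}\|v\|_1$ and an analogous estimate for $Y_t$. Inserting them into
$$
\|u(t)\|_\infty\leq\|Z_t*u_0\|_\infty+\int_0^t\|Y_{t-s}*u^p(s)\|_\infty\,ds
$$
and running a contraction in a suitably weighted $L^\infty_t$ norm, the forcing integral stays finite for all $t$ exactly when $p\geq 1+\varpi/N$, producing a global mild solution for small $u_0$. The ancillary conditions $\beta>N\varpi/(N+\varpi)$ and $N<2\beta$ are what make the $L^1$-$L^\infty$ interpolation and the time integrability of $Y_{t-s}$ near $s=t$ compatible. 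For item 4, the exact self-similar scaling available when $\mathcal{D}_t=\partial_t^\alpha$ lets the pair $(Z_t,Y_t)$ be controlled by pure rescaling, bypassing the dimensional restriction $N<2\beta$ needed in the general case.

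The main obstacle, I expect, lies not in the comparison/iteration scheme itself but in the sharp pointwise and $L^q$ estimates on $(Z_t,Y_t)$ in the generality claimed --- singular versus integrable, compactly supported versus power-tailed kernels --- since both the Fujita iteration of item~2 and the fixed-point scheme of item~3 depend sensitively on the precise exponents $\alpha N/\overline\gamma$ and $\alpha N/\varpi$ that such estimates deliver.
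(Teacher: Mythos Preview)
Your outline for item~1 is in the right spirit and close to the paper's argument, which also builds a spatial weight $\phi$ with $|\mathcal{L}_x\phi|\le c\phi$ (Proposition~\ref{prop2-BV}) and feeds it into the very weak formulation. Your ODE route $\mathcal{D}_tF\ge F^p-\lambda F$ is a plausible variant, though you would have to justify it carefully for mild solutions: the paper only shows these are very weak solutions, where the time derivative sits on the test function as a backward operator, so a pointwise-in-$t$ differential inequality for $F$ is not immediate.

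There are, however, two genuine gaps.

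\textbf{Item~2.} Your Fujita iteration needs a pointwise \emph{lower} bound $Z_t(x)\gtrsim t^{-\alpha N/\overline\gamma}$ on balls of the right radius. Under the sole hypotheses \eqref{K1}+\eqref{J1} this is not available: \eqref{J1} is an \emph{upper} bound on the tail of $\mathcal{J}$, and the paper's kernel estimates (Lemma~\ref{lem-Z-estim}, Proposition~\ref{prop-Gt}) all require \eqref{J2} and \eqref{J3}, which are not assumed here. Two-sided heat-kernel bounds for L\'evy-type operators generally need two-sided control on $\mathcal{J}$; with \eqref{J1} alone you have none from below. The paper sidesteps kernel estimates entirely for the Fujita range: it uses the very weak formulation with a \emph{rescaled} test function $\zeta(x,t)=\phi(x/T^{\alpha/\gamma})(1-t/T)_+^\mu$, applies Proposition~\ref{prop2-BV} to get $|\mathcal{L}_x\phi_T|\le cT^{-\alpha}\phi_T$, and then a H\"older inequality turns the weak identity into an estimate that contradicts $u_0\not\equiv0$ when $p<1+\overline\gamma/N$.

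\textbf{Item~3.} A straight $L^1$--$L^\infty$ smoothing estimate on $Z_t$ does not exist in the range claimed: by Lemma~\ref{lem-Z-estim}, $Z_t\in L^\eta$ only for $\eta<N/(N-\beta)_+$, so $Z_t\notin L^\infty$ whenever $N\ge\beta$, and hence $\|Z_t*u_0\|_\infty$ cannot be bounded by $\|u_0\|_1$. The contraction in a weighted $L^\infty_t(L^\infty_x)$ norm therefore fails outside the very narrow case $N<\beta$. The paper instead runs the fixed point in a weighted $L^q$ space with $q$ chosen in the window $\max\{p,\varpi/\beta\}<q<\min\{q_1,q_2,q_3\}$ (this is where the condition $\beta>N\varpi/(N+\varpi)$ enters, to make the window nonempty), and only afterwards bootstraps $L^q\to L^r\to L^\infty$ in finitely many steps (Step~4 of Theorem~\ref{teo-fujita3}). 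The restriction $N<2\beta$ arises from the Fourier-side proof of the $L^\eta$ bounds (Section~\ref{sec-Fourier}), and for item~4 it is removed not by scaling per se but by the subordination formula of Proposition~\ref{prop-W-caputo}, which expresses $Z_t,Y_t$ as averages of $G_\tau$ against an explicit Wright-function density and allows the $L^\eta$ estimates without passing through $\widehat Z_t\in L^\sigma$.
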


We do not know if the restriction $\beta>\frac{N\varpi}{N+\varpi}$ in the last two items is necessary or technical. Observe that if $\mathcal{J}(z)\sim|z|^{-N-\gamma}$ at infinity, $\gamma>0$, and the above restrictions on $\beta$ hold, the Fujita exponent is $p_*=1+\overline\gamma/N$. In particular for the stable-like case \eqref{stable-like} it is $p_*=1+\beta/N$; see \cite{CortazarQuirosWolanski24} for the double fractional equation \eqref{double-eq}.

\

{\sc Organization of the paper}

We have tried to make this work self-contained for the benefit of the reader. We start with  a long preliminary Section~\ref{sec-fundamentalair} where we  construct the fundamental pair $(Z_t, Y_t)$, and study its integrability properties, in the more general case and in particular situations. In Section~\ref{sec-basictheory} we consider problem~\eqref{eq.principal} and prove the existence and uniqueness of a mild solution. We also prove that mild solutions are very weak solutions. In Sections~\ref{sec-bup1} and~~\ref{sec-bup2} we characterize the ranges for which there exist blow-up solutions and/or global solutions. We end with an Appendix~\ref{app-A} where we study the properties of the symbol $m(\xi)$ that characterizes the diffusion operator $\mathcal{L}_x$ and its associated heat kernel.

The letters $c,c_1,c_2,\dots$ denote generic constants non depending on the relevant quantities, and that may change through the calculations.

\

\section{The fundamental pair}\label{sec-fundamentalair}

\setcounter{equation}{0}

In this section we define the concept of mild solution to the linear problem associated to our operator $\mathcal{H}=\mathcal{D}_t +\mathcal{L}_x$, that we write here again for convenience.
\begin{equation}\label{prob-linear2}
\left\{
\begin{array}{ll}
\mathcal{H}v=f,\qquad & \text{in } Q_T,\\
v(x,0)=v_0(x).
\end{array}\right.
\end{equation}
We denote $X=L^1(\mathbb{R}^N)\cap L^\infty(\mathbb{R}^N)$.
\begin{defi}
  Given the data $u_0\in X$ and $f\in L^\infty((0,T);X)$, a function $u\in C([0,T);X)$ is said to be a mild solution of problem \eqref{prob-linear2} if
\begin{equation}\label{mild0}
u(x,t)=Z_t*u_0(x)+\int_0^tY_{t-\tau}*f(x,\tau)\,d\tau,
\end{equation}
a.e $x\in\mathbb{R}^N$ and for all $0<t<T$.\end{defi}

We must give a sense to the above formula by constructing the functions $Z_t,Y_t$, so that $u$  is well defined and satisfies $u\in C([0,T);X)$. See \cite{Duan05,KemppainenSiljanderZacher17} for the double fractional case.

\subsection{The kernel $Z_t$.}\label{subsec-Zt}
First we take $Z_t$ as the fundamental solution of the operator $\mathcal{H}$, that is
\begin{equation}\label{zeta}
\left\{
\begin{array}{ll}
\mathcal{H} Z_t=0,\qquad & \text{in } Q,\\
Z_0=\delta(x).
\end{array}\right.
\end{equation}
In that way, if $v_0$ is regular enough, then the function $v=Z_t *v_0$ satisfies
\begin{equation}\label{linearzero}
\left\{
\begin{array}{ll}
\mathcal{H} v=0,\qquad & \text{in } Q,\\
v(x,0)=v_0(x).
\end{array}\right.
\end{equation}
In the special double fractional case the kernel $Z_t$ is self-similar
\begin{equation}\label{selfsimilarZ}
Z_t(x)=t^{-\frac{\alpha N}{\beta}}F(xt^{-\frac{\alpha}{\beta}}),\qquad \widehat{F}(\xi)=E_\alpha(-|\xi|^{\beta}),
\end{equation}
where $E_a(r)=\sum\limits_{j=0}^\infty\frac{r^j}{\Gamma(a j+1)}$ is the Mittag-Leffler function. From this formula in \cite{Duan05} it is obtained an explicit representation of $F$ in terms of Fox functions, and thus deduced its behaviour (and then that of $Z_t$). In particular $F>0$ in $\mathbb{R}^N$ and it has a singularity at the origin depending on the dimension and $\beta$. It is proved  in \cite{Duan05} that $F\in L^q(\mathbb{R}^N)$ if and only if $1\le q<\frac N{(N-\beta)_+}$.

For the general nonlocal operator $\mathcal{H}$ considered here there is no such an explicit representation so we must study the properties of $Z_t$ in an indirect way from the properties of its Fourier/Laplace transform.

In order to decribe $Z_t$ we apply the Fourier transform in $x$ to the above problem, getting the nonlocal problem in $t$
\begin{equation}\label{Z-Fourier}
(\mathcal{D}_t+m(\xi))\widehat{Z}_t(\xi)=0, \quad t>0,\qquad \widehat{Z}_0(\xi)=1,
\end{equation}
where $m(\xi)$ is the symbol of $\mathcal{L}_x$. Then $\widehat{Z}_t(\xi)=\varrho_1(t;m(\xi))$, where $\varrho_1=\varrho_1(t;\mu)$ solves
\begin{equation}\label{eq:ro1}
(\mathcal{D}_t+\mu) \varrho_1=0, \quad t>0,\qquad \varrho_1(0;\mu)=1.
\end{equation}

\

We now observe that for any given $\kappa\in L^1_{loc}(\mathbb{R}^+)$, nonnegative and nonincreasing vanishing at infinity, there exists a conjugate kernel $\ell$ such that
\begin{equation}\label{conjugate}
  k\star \ell(t)=\int_0^t\kappa(\tau)\ell(t-\tau)\,d\tau=1.
\end{equation}
The argument is as follows, see \cite{Pruss93}, Propositions 4.2 and 4.3: Since the Laplace transform $\widetilde\kappa$ is a completely monotone function, then $\varphi(s)=s\widetilde\kappa(s)$ is a Bernstein function, and $1/\varphi(s)$ is also a completely monotone function; therefore there exists a nonnegative  function $\ell$ such that $\widetilde\ell(s)=1/\varphi(s)$, which implies $\widetilde k(s)\widetilde\ell(s)=1/s$; uniqueness of the Laplace transform gives \eqref{conjugate}. However, the monotonicity of $\ell$ is crucial in what follows, so we must impose this property in assumption~\eqref{K0}.

For example, in the Caputo derivative case $\kappa(t)=\frac1{\Gamma(1-\alpha)}t^{-\alpha}$, we get $\ell(t)=\frac1{\Gamma(\alpha)}t^{\alpha-1}$. See for instance~\cite{VergaraZacher15} for more explicit examples.

For the general case we define
\begin{equation}\label{haches}
h_\kappa(t)=\int_0^t\kappa(\tau)\,d\tau,\qquad h_\ell(t)=\int_0^t\ell(\tau)\,d\tau.
\end{equation}

\begin{prop}
\begin{equation}\label{ell}
    \ell(t)\le \frac1{h_\kappa(t)},\qquad h_\ell(t)\ge \frac t{h_\kappa(t)}.
  \end{equation}
  In particular, assuming \eqref{K1} we have
  \begin{equation}\label{ell2}
    \ell(t)\le ct^{\alpha-1},\qquad h_\ell(t)\ge ct^\alpha.
  \end{equation}
\end{prop}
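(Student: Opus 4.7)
The plan is to exploit the defining identity $\kappa\star\ell=1$ together with the monotonicity properties of $\kappa$ and $\ell$ guaranteed by \eqref{K0}.

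For the first bound, I would start from
$$1=\int_0^t\kappa(\tau)\ell(t-\tau)\,d\tau,$$
and use that $\ell$ is nonincreasing: for $\tau\in[0,t]$ we have $t-\tau\le t$, hence $\ell(t-\tau)\ge\ell(t)$. Pulling this constant out of the integral gives $1\ge\ell(t)\int_0^t\kappa(\tau)\,d\tau=\ell(t)h_\kappa(t)$, which is exactly $\ell(t)\le 1/h_\kappa(t)$.

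For the integrated bound, the trick is to integrate the identity $\kappa\star\ell\equiv 1$ over $[0,t]$ and swap the order of integration by Fubini (both integrands are nonnegative):
$$t=\int_0^t(\kappa\star\ell)(s)\,ds=\int_0^t\int_0^s\kappa(s-r)\ell(r)\,dr\,ds=\int_0^t\ell(r)\,h_\kappa(t-r)\,dr.$$
Since $\kappa\ge 0$ the primitive $h_\kappa$ is nondecreasing, so $h_\kappa(t-r)\le h_\kappa(t)$ for $r\in[0,t]$, giving $t\le h_\kappa(t)\,h_\ell(t)$, which is the second inequality.

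Finally, under \eqref{K1} a direct integration yields
$$h_\kappa(t)\ge c_1\int_0^t\tau^{-\alpha}\,d\tau=\frac{c_1}{1-\alpha}\,t^{1-\alpha},$$
and inserting this into \eqref{ell} gives both $\ell(t)\le ct^{\alpha-1}$ and $h_\ell(t)\ge ct^\alpha$. I do not anticipate any real obstacle: the proof is essentially a two-line monotonicity argument, and the only subtle point is that the monotonicity of $\ell$ is not automatic from the existence theorem of \cite{Pruss93} but must be imposed as part of hypothesis \eqref{K0}, as the authors already emphasized in the paragraph right after \eqref{conjugate}.
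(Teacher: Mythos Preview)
Your argument is correct. The first inequality is proved exactly as in the paper. For the second, however, you take a different and in fact more direct route: you integrate the identity $\kappa\star\ell\equiv1$ over $[0,t]$, apply Fubini to obtain $t=\int_0^t\ell(r)h_\kappa(t-r)\,dr$, and then bound $h_\kappa(t-r)\le h_\kappa(t)$. The paper instead evaluates the convolution at the point $2t$, splits the integral at $\tau=t$, uses monotonicity of both $\ell$ and $\kappa$ separately on the two pieces to get $1\le \ell(t)h_\kappa(t)+\kappa(t)h_\ell(t)=(h_\ell h_\kappa)'(t)$, and then integrates this differential inequality. Your approach is shorter and uses only the nonnegativity of $\kappa$ (through the monotonicity of $h_\kappa$), whereas the paper's argument needs the monotonicity of both $\kappa$ and $\ell$; on the other hand, the paper's version exhibits the product $h_\ell h_\kappa$ as a function with derivative bounded below by $1$, which is a slightly stronger pointwise statement.

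One small slip in the final paragraph: the lower bound $h_\kappa(t)\ge\frac{c_1}{1-\alpha}t^{1-\alpha}$ that you compute only gives $\ell(t)\le ct^{\alpha-1}$. To conclude $h_\ell(t)\ge ct^\alpha$ from $h_\ell(t)\ge t/h_\kappa(t)$ you need the \emph{upper} bound $h_\kappa(t)\le\frac{c_2}{1-\alpha}t^{1-\alpha}$, which of course also follows immediately from \eqref{K1}.
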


\begin{proof}
We first have
$$
1=\int_0^t\kappa(\tau)\ell(t-\tau)\,d\tau\ge \ell(t)\int_0^t\kappa(\tau)\,d\tau=\ell(t)h_\kappa(t).
$$
In the other direction
$$
\begin{array}{rl}
1&\displaystyle=\int_0^{2t}\kappa(\tau)\ell(2t-\tau)\,d\tau\le  \ell(t)\int_0^t\kappa(\tau)\,d\tau+\kappa(t)\int_t^{2t}\ell(2t-\tau)\,d\tau\\ [4mm]
&\displaystyle=\ell(t)h_\kappa(t)+\kappa(t)h_\ell(t)=(h_\ell h_\kappa)'(t).
\end{array}
$$
Thus $h_\ell(t)h_\kappa(t)\ge t$.

\end{proof}

In terms of the conjugate kernel $\ell$ equation \eqref{eq:ro1} is written as the linear Volterra equation
\begin{equation}\label{eq:ro1-b}
\varrho_1+\mu (\ell\star\varrho_1)=1.
\end{equation}
In \cite{Miller71} it is proved that equation \eqref{eq:ro1-b} has a solution $\varrho_1\in W_{loc}^{1,1}(\mathbb{R}^+)$, $0<\varrho_1<1$, $\partial_t\varrho_1\le0$ for every $\mu>0$, see also \cite{VergaraZacher15}.
It is characterized by its Laplace transform (in $t$)
\begin{equation}\label{laplace-rho}
  \widetilde{\varrho}_1(s;\mu)=\frac{1/s}{1+\mu\widetilde{\ell}(s)}=
  \frac{\widetilde{\kappa}(s)}{s\widetilde{\kappa}(s)+\mu}.
\end{equation}
We can also write
$$
\widetilde{\varrho}_1(s;\mu)=\frac1s\sum_{n=0}^\infty(-\mu\widetilde{\ell}(s))^n
=\frac1s\left(1-\mu\sum_{n=0}^\infty(-\mu)^n(\ell\stackrel{n)}\star\ell)\,\widetilde{}\,(s)\right).
$$
Here $\ell\stackrel{n)}\star\ell$ denotes the $n$--times convolution, so that $\ell\stackrel{0)}\star\ell=\ell$ and  $(\ell\stackrel{n)}\star\ell)\,\widetilde{}=(\widetilde{\ell}(s))^{n+1}$. Therefore
\begin{equation}\label{rho1-sum}
\varrho_1(t;\mu)=1-\mu\int_0^t\sum_{n=0}^\infty(-\mu)^n(\ell\stackrel{n)}\star\ell)(\tau)\,d\tau.
\end{equation}

Using any of the above formulas in the particular Caputo case $\mathcal{D}_t=\partial_t^\alpha$ it is easy to recover the expression of $\varrho_1$ as a Mittag-Leffler function.

Observe also that $\ell=1$ in \eqref{eq:ro1-b}, which corresponds to $\mathcal{D}_t=\partial_t$, gives $\varrho_1(t;\mu)=e^{-\mu t}$, and thus $G_t=(e^{-m(\xi)t})^\vee$, which is the Gauss kernel of the operator $\partial_t+\mathcal{L}_x$.

\

Now, if $z_0\in L^2(\mathbb{R}^N)$, the function $a(\xi,t)=\varrho_1(t;m(\xi))z_0(\xi)$ is a solution in $C((0,T);L^2(\mathbb{R}^N))$, for every $T>0$, to the problem
$$
\left\{
\begin{array}{ll}
(\mathcal{D}_t +m(\xi))a=0,\qquad & \text{in } Q_T,\\
a(\xi,0)=z_0(\xi).
\end{array}\right.
$$
Therefore, if $v_0\in L^2(\mathbb{R}^N)$, the function $v=Z_t*u_0\in C((0,T);L^2(\mathbb{R}^N))$ is a solution  for every $T>0$ to problem \eqref{linearzero}.

\subsection{The kernel $Y_t$.}\label{subsec-Yt} We now take a look at the second term in \eqref{mild0}. We want to solve the nonhomogeneous linear problem
\begin{equation}\label{linear-nonh}
\left\{
\begin{array}{ll}
\mathcal{H} w=f,\qquad &\text{in } Q,\\
w(x,0)=0,
\end{array}\right.
\end{equation}
with a given smooth function $f$. Taking Fourier transform and inverting the time operator, we get the expression
$$
\widehat w+m(\ell\star\widehat w)=\ell\star\widehat f.
$$
Thus, in the same way as before we consider now the solution $\varrho_2=\varrho_2(t;\mu)$ to the Volterra equation
\begin{equation}\label{eq:ro2}
\varrho_2+\mu (\ell\star\varrho_2)=\ell, \quad t>0.
\end{equation}
Then, if $g\in C((0,T);L^2(\mathbb{R}^N))$, the function $b(\xi,t)=\varrho_2(\cdot;m(\xi))\star g(t)$ is a solution in $C((0,T);L^2(\mathbb{R}^N))$, for every $T>0$, to the problem
\begin{equation}\label{Y-Fourier}
\left\{
\begin{array}{ll}
(\mathcal{D}_t +m(\xi))b=g,\qquad & \text{in } Q_T,\\
b(\xi,0)=0.
\end{array}\right.
\end{equation}
Finally, defining $\widehat{Y}_t(\xi)=\varrho_2(t;m(\xi))$,  given  $f\in C((0,T);L^2(\mathbb{R}^N))$ the function $w=\int_0^tY_{t-s}*f(s)\,ds$ solves problem \eqref{linear-nonh}.
See \cite{VergaraZacher15}.

The above resolvent $\varrho_2$ exists for the family of kernels considered, i.e., equation \eqref{eq:ro2} has a unique solution, which is locally integrable and nonnegative, see \cite[Theorem 2.3.1]{GripenbergLondenStaffans90}.
It  is constructed recursively
\begin{equation}\label{recurs-rho2}
\rho_2=\lim_{n\to\infty}r_n,\qquad r_{n+1}=\ell-\mu(\ell\star r_n),\;r_1=\ell,
\end{equation}
and is characterized by its Laplace transform
\begin{equation}\label{laplace-rho2}
  \widetilde{\varrho}_2(s;\mu)=\frac{\widetilde{\ell}(s)}{1+\mu\widetilde{\ell}(s)}=\frac{1}{s\widetilde{\kappa}(s)+\mu}.
\end{equation}
Comparing with \eqref{laplace-rho}, we get the relation
\begin{equation}\label{rho1-2}
\widetilde{\varrho}_1=\widetilde{\kappa}\widetilde{\varrho}_2\;\Rightarrow\;\varrho_1=\kappa\star\varrho_2\;\Rightarrow\;
\ell\star\varrho_1=1\star\varrho_2\;\Rightarrow\;\varrho_2=\partial_t(\ell\star\varrho_1).
\end{equation}
This gives
$$
\widehat{Y}_t(\xi)=\partial_t(\ell\star\widehat{Z}_t(\xi)),
$$
and as a corollary,
\begin{prop}
The fundamental pair $(Z_t,Y_t)$ satisfies the following relation
\begin{equation}\label{Z-Y}
Y_t=\partial_t(\ell\star Z_t).
\end{equation}
\end{prop}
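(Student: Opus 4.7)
The plan is to lift the scalar identity $\varrho_2(t;\mu)=\partial_t(\ell\star\varrho_1(\cdot;\mu))(t)$ between the two Volterra resolvents (which is essentially already contained in the chain of implications \eqref{rho1-2}) to the fundamental pair by the substitution $\mu=m(\xi)$ followed by inverse spatial Fourier transform. Since $m(\xi)\ge 0$ (this is proved in the Appendix, and in any case follows from the Lévy form of $\mathcal{L}_x$), the substitution is legitimate.

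First I would establish the scalar identity rigorously. From the Laplace transform formulas \eqref{laplace-rho} and \eqref{laplace-rho2} we read off $\widetilde{\varrho}_1(s;\mu)=\widetilde{\kappa}(s)\widetilde{\varrho}_2(s;\mu)$. Multiplying by $\widetilde{\ell}(s)$ and using the Laplace form of \eqref{conjugate}, namely $\widetilde{\kappa}(s)\widetilde{\ell}(s)=1/s$, gives
$$
\widetilde{\ell}(s)\widetilde{\varrho}_1(s;\mu)=\frac{1}{s}\widetilde{\varrho}_2(s;\mu).
$$
By uniqueness of the Laplace transform, this translates into
$$
(\ell\star\varrho_1)(t;\mu)=\int_0^t\varrho_2(\tau;\mu)\,d\tau.
$$
Since $\varrho_2(\cdot;\mu)\in L^1_{loc}(\mathbb{R}^+)$, the right-hand side is absolutely continuous in $t$, and differentiating in $t$ gives the pointwise (a.e.) identity $\partial_t(\ell\star\varrho_1)(t;\mu)=\varrho_2(t;\mu)$.

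Next I would substitute $\mu=m(\xi)$ to obtain, for a.e.\ $t>0$ and every $\xi\in\mathbb{R}^N$,
$$
\partial_t(\ell\star\widehat{Z}_t(\xi))=\widehat{Y}_t(\xi),
$$
and then take the inverse spatial Fourier transform. The time convolution with $\ell$ and the derivative $\partial_t$ act only in the $t$ variable, so they commute with $\mathcal{F}^{-1}_\xi$; this yields \eqref{Z-Y} in the sense of tempered distributions in $x$, which is the claimed formula. The main technical nuisance I anticipate is justifying the interchange of $\partial_t$ with $\mathcal{F}^{-1}_\xi$ and the absolute continuity in $t$ uniformly enough in $\xi$; this is resolved by the identity $(\ell\star\varrho_1)(t;\mu)=(1\star\varrho_2)(t;\mu)$, whose right-hand side is Lipschitz in $t$ locally uniformly in $\mu\ge 0$ (using $0\le\varrho_2(\cdot;\mu)\le \ell$ from the recursive construction \eqref{recurs-rho2}), making the spatial inverse transform of the $t$-derivative well defined as a distribution and identifiable with $Y_t$ via its defining relation $\widehat{Y}_t(\xi)=\varrho_2(t;m(\xi))$.
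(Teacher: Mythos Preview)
Your proposal is correct and follows essentially the same route as the paper: the proposition is stated as an immediate corollary of the scalar chain \eqref{rho1-2} (Laplace transforms give $\widetilde{\varrho}_1=\widetilde{\kappa}\widetilde{\varrho}_2$, hence $\ell\star\varrho_1=1\star\varrho_2$, hence $\varrho_2=\partial_t(\ell\star\varrho_1)$), then one substitutes $\mu=m(\xi)$ to obtain $\widehat{Y}_t(\xi)=\partial_t(\ell\star\widehat{Z}_t(\xi))$ and passes to the $x$-side. You in fact supply more justification for the interchange of $\partial_t$ with the inverse Fourier transform than the paper does.
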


On the other hand, using formula \eqref{rho1-sum} we have
\begin{equation}\label{rho2-sum}
\varrho_2(t;\mu)=-\frac1\mu\partial_t\varrho_1(t;\mu)=\sum_{n=0}^\infty(-\mu)^{n}(\ell\stackrel{n)}\star\ell)(t),
\end{equation}
see also \eqref{recurs-rho2}.

In the Caputo case the above formulas give the explicit expression
\begin{equation}\label{ygriega-mitag}
\begin{array}{rl}
\varrho_2(t;\mu)&\displaystyle=\sum_{n=1}^\infty(-\mu)^{n-1}\frac{t^{n\alpha-1}}{\Gamma(n\alpha)}=t^{\alpha-1}\sum_{j=0}^\infty\frac{(-\mu t^\alpha)^{j}}{\Gamma((j+1)\alpha)} \\ [3mm]
&\displaystyle=t^{\alpha-1}E_{\alpha,\alpha}(-\mu t^\alpha),
\end{array}
\end{equation}
where $E_{a,b}(r)=\sum\limits_{j=0}^\infty\frac{r^j}{\Gamma(a j+b)}$ is the two-parametric Mittag-Leffler function. Also, formula \eqref{Z-Y} becomes here,
\begin{equation}\label{Zt-Tt}
Y_t=\partial_t(\ell\star Z_t)={}_{RL}\partial_t^{1-\alpha}(Z_t).
\end{equation}
This formula can be checked directly by using the Riemann-Liouville fractional derivative of a power in the expression of the Mittag-Leffler function.

We end this section by observing that  $Z_t$ and $Y_t$ are nonnegative. A direct proof could be performed by a comparison argument. Nevertheless, as we will see, it is an easy consequence of the subordination formula given in Section~\ref{subordination}.

As a corollary we have
\begin{equation}\label{norm1}
\begin{array}{c}
\|Z_t\|_1=\widehat{Z}_t(0)=\varrho_1(t;0)=1,\\ [4mm]
\|Y_t\|_1=\widehat{Y}_t(0)=\varrho_2(t;0)=\ell(t).
\end{array}\end{equation}
Moreover by \eqref{ell} (and \eqref{ell2} if we assume \eqref{K1}), if $g\in L^q(\mathbb{R}^N)$ for some $1\le q\le\infty$,
\begin{equation}\label{eq:u0zyt-inf}
\|Z_t*g\|_q\le \|g\|_q,\qquad\|Y_t*g\|_q\le \ell(t)\|g\|_q\le ct^{\alpha-1}\|g\|_q,
\end{equation}
for every $t>0$.

\subsection{Estimates for the fundamental pair: The Fourier transform strategy}\label{sec-Fourier}

We have defined $Z_t(x)$ and $Y_t(x)$ as the inverse Fourier transforms of $\varrho_1(t;m(\xi))$ and $\varrho_2(t;m(\xi))$, respectively. We now try to estimate these two last functions and apply the inverse Fourier transform. Unfortunately this restricts the dimension since $Z_t,Y_t\notin L^\infty(\mathbb{R}^N)$. In the next section we follow another strategy in order to avoid this restriction; we say in advance that we are able to do that but only in the Caputo case $\mathcal{D}_t=\partial_t^\alpha$.

We start with $\varrho_1$, which can be estimated easily in the following way:
$$
1=\varrho_1+\mu (\ell\star\varrho_1)\ge\varrho_1+\mu (1\star\ell)\varrho_1,
$$
that is
\begin{equation}\label{bounds:s}
\varrho_1(t;\mu)\le\frac1{1+\mu(1\star\ell(t))}=\frac1{1+\mu h_\ell(t)}.
\end{equation}

This bound is used now to estimate the integrability properties of $\widehat Z_t$ and then of $Z_t$.
\begin{lema}\label{lem-Z-estim}
Assume hypotheses \eqref{J2} and \eqref{J3} with $N<2\beta$. Then $Z_t\in L^\eta(\mathbb{R}^N)$ for $t>0$, where $1\le \eta<\frac N{(N-\beta)_+}$ if $N\ge\beta$, or $1\le \eta \le\infty$ if $1=N<\beta$. Moreover
\begin{equation}\label{decayZt}
\|Z_t\|_\eta\le c\begin{cases}
  h_\ell(t)^{-\frac{ N}{\beta}(1-1/\eta)} &t\le1, \\ h_\ell(t)^{-\min\{1,\frac{ N}{\varpi}(1-1/\eta)\}}&t>1.
  \end{cases}
\end{equation}
If we also assume condition \eqref{K1}, the corresponding explicit estimates are deduced by using that $h_\ell(t)\ge ct^\alpha$.
\end{lema}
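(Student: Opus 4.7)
The starting point is the pointwise bound
\[
\widehat{Z}_t(\xi) = \varrho_1(t; m(\xi)) \le \frac{1}{1 + m(\xi)\,h_\ell(t)}
\]
supplied by \eqref{bounds:s}. Abbreviating $\lambda = h_\ell(t)$, my strategy is to control $\|\widehat{Z}_t\|_q$ for a suitable $q \in [1,2]$ and transfer the bound to $Z_t$: for $\eta \ge 2$ via Hausdorff-Young (with $q = \eta'$), and for $\eta \in [1,2]$ via interpolation between $\|Z_t\|_1 = 1$ (from \eqref{norm1}) and Plancherel's identity $\|Z_t\|_2 = \|\widehat{Z}_t\|_2$. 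Either way, the problem reduces to estimating
\[
\int_{\mathbb{R}^N}\frac{d\xi}{(1+\lambda\,m(\xi))^{q}},\qquad \frac1q = 1 - \frac1\eta.
\]

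I then split the integral at $|\xi|=1$. The Appendix, translating \eqref{J3} and \eqref{J2} into bounds on the symbol, yields $m(\xi) \ge c|\xi|^\beta$ for $|\xi|\ge 1$ and $m(\xi) \ge c|\xi|^\varpi$ for $|\xi|\le 1$. On the outer region the rescaling $u = \lambda^{1/\beta}\xi$ gives
\[
\int_{|\xi|\ge 1}\frac{d\xi}{(1+c\lambda|\xi|^\beta)^{q}} \le \lambda^{-N/\beta}\int_{|u|\ge\lambda^{1/\beta}}\frac{du}{(1+c|u|^\beta)^{q}},
\]
whose residual integral converges at infinity if and only if $q\beta > N$, which is exactly the stated restriction $\eta < N/(N-\beta)_+$. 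The inner region is handled by the analogous rescaling $u = \lambda^{1/\varpi}\xi$, producing a prefactor $\lambda^{-N/\varpi}$ times an integral whose size depends on whether $q\varpi$ exceeds $N$.

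Tracking the dominant term in each regime, for $\lambda \le 1$ the outer piece is of order $\lambda^{-N/\beta}$ and dominates the inner (itself bounded by the measure of the unit ball), so that $\|\widehat{Z}_t\|_q \le c\,\lambda^{-N/(q\beta)}$. For $\lambda \ge 1$ the outer piece is of order $\lambda^{-q}$ while the inner is of order $\lambda^{-\min\{N/\varpi,\,q\}}$, giving $\|\widehat{Z}_t\|_q \le c\,\lambda^{-\min\{1,\,N/(q\varpi)\}}$. Substituting $1/q = 1-1/\eta$ and using the monotonicity of $h_\ell$ to align the dichotomy $\lambda \lessgtr 1$ with $t \lessgtr 1$ (absorbing a constant depending on $h_\ell(1)$) produces \eqref{decayZt}; the last sentence of the lemma is then immediate from $h_\ell(t) \ge c t^\alpha$ in \eqref{ell2}.

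The step I expect to be most delicate is the case analysis in $\lambda$ together with the matching of thresholds: the hypothesis $N < 2\beta$ enters precisely at the Plancherel endpoint $q = 2$ as the convergence condition $q\beta > N$, which both forces the upper bound $\eta < N/(N-\beta)_+$ and cleanly separates the dimensional ranges $N > \beta$ and $1 = N < \beta$ in the statement; verifying that the sub-range $\eta \in [1,2]$ really does absorb into the same formula (rather than losing a factor through interpolation) also requires some care.
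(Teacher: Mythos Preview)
Your proposal is correct and follows essentially the same route as the paper: bound $\widehat{Z}_t$ pointwise via \eqref{bounds:s}, split the $L^\sigma$-integral of $\widehat{Z}_t$ at $|\xi|=1$ using the symbol estimates $m(\xi)\ge c|\xi|^\varpi$ (inner) and $m(\xi)\ge c|\xi|^\beta$ (outer), rescale to extract the powers of $h_\ell(t)$, apply Hausdorff--Young for $\eta\ge2$ (where $N<2\beta$ guarantees $\sigma\beta>N$ at $\sigma=2$), and interpolate with $\|Z_t\|_1=1$ for $1<\eta<2$. Your identification of where each hypothesis enters and of the case analysis in $\lambda=h_\ell(t)$ matches the paper's argument closely.
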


\begin{proof}
Recall that $\|Z_t\|_1=1$. Now we use \eqref{bounds:s}, that gives
\begin{equation}\label{bounds2:hatZ}
\widehat{Z}_t(\xi)\le\frac 1{1+cm(\xi)h_\ell(t)}.
\end{equation}
We calculate the $L^\sigma$ norm of $\widehat{Z}_t$ using the estimates \eqref{rho>1} and \eqref{rho<1} of the symbol $m(\xi)$ from Appendix~\ref{app-A},
$$
\int_{\mathbb{R}^N}|\widehat{Z}_t(\xi)|^\sigma\,d\xi\le c_1\int_0^1\frac{\rho^{N-1}}{(1+\rho^{\varpi}h_\ell(t))^\sigma}\,d\rho+
c_2\int_1^\infty\frac{\rho^{N-1}}{(1+\rho^{\beta}h_\ell(t))^\sigma}\,d\rho
=I_1+I_2.$$
The first integral is always finite while the second integral converges only if $\beta\sigma>N$.
 Now
$$
\begin{array}{l}
\displaystyle I_1=c_1h_\ell(t)^{-\frac{N}{\varpi}}\int_0^{h_\ell(t)}\frac{\zeta^{\frac N{\varpi}-1}}{(1+\zeta)^\sigma}\,d\zeta\le c_1\begin{cases}
  1 &t\le1, \\ h_\ell^{-\frac{N}{\varpi}}(t)+h_\ell^{-\sigma}(t)&t>1;
\end{cases} \\ [6mm]
\displaystyle I_2=c_2h_\ell^{-\frac{N}{\beta}}(t)\int_{h_\ell(t)}^\infty\frac{\zeta^{\frac N{\beta}-1}}{(1+\zeta)^\sigma}\,d\zeta\le c_2\begin{cases}
 h_\ell^{-\frac{N}{\beta}}(t)&t\le 1,\\
 h_\ell^{-\sigma}(t) &t>1. \end{cases}
\end{array}$$
This gives
$$
\int_{\mathbb{R}^N}|\widehat{Z}_t(\xi)|^\sigma\,d\xi\le c\begin{cases}
  h_\ell^{-\frac{N}{\beta}}(t) &t\le1, \\ h_\ell^{-\min\{\sigma,\frac{N}{\varpi}\}}(t)&t>1.
  \end{cases}
$$
We now have
$$
\|Z_t\|_\eta\le c\|\widehat{Z}_t\|_\sigma\qquad \max\{N/\beta,1\}<\sigma=\frac \eta{\eta-1}\le2.
$$
We get the desired estimate for $2\le \eta<\frac{N}{(N-\beta)_+}$. We end with an interpolation argument for $1<\eta<2$. We can also include the exponent $\eta=\infty$ if $N<\beta$.
\end{proof}

\begin{cor}\label{cor-Zt}
In the above hypotheses, if $g\in L^r(\mathbb{R}^N)$ for some $1\le r\le\infty$,
\begin{equation}\label{eq:u0zt}
\|Z_t*g\|_q\le c h_\ell^{-\delta}(t)\|g\|_r
\end{equation}
for every $r\le q<\frac{Nr}{(N-\beta r)_+}$, if $r\le N/\beta$, for every $r\le q\le \infty$ if $r>N/\beta$,
where
\begin{equation}\label{exp:u0zt}
\delta=\delta(r,q;t)=\begin{cases}
  \frac{ N}\beta(1/r-1/q) &t\le1, \\ \min\{1,\frac{N}\varpi(1/r-1/q)\}&t>1.
\end{cases}
\end{equation}\end{cor}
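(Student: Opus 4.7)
The plan is to apply Young's convolution inequality and feed in the bound from Lemma~\ref{lem-Z-estim}. Concretely, for an exponent $\eta\in[1,\infty]$ satisfying the hypothesis of that lemma, Young's inequality gives
$$\|Z_t*g\|_q\le \|Z_t\|_\eta\,\|g\|_r,\qquad 1+\tfrac1q=\tfrac1\eta+\tfrac1r,$$
so that the choice of $\eta$ is forced by $r$ and $q$ through the relation $1-\tfrac1\eta=\tfrac1r-\tfrac1q$. The only work is to check that for each $q$ in the stated range, this $\eta$ lies in the admissible interval of Lemma~\ref{lem-Z-estim}, and then to read off the exponent $\delta(r,q;t)$.

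For admissibility, the condition $\eta\ge 1$ is equivalent to $q\ge r$, while (when $N\ge\beta$) the upper bound $\eta<\frac{N}{(N-\beta)_+}$ rewrites as $\tfrac1q>\tfrac1r-\tfrac\beta N$. If $r\le N/\beta$ the latter translates into $q<\frac{Nr}{N-\beta r}$ (with the endpoint interpreted as $q<\infty$ when $r=N/\beta$), while if $r>N/\beta$ the constraint is automatic and $q$ may range up to $\infty$; when additionally $N<\beta$ the endpoint $\eta=\infty$ is allowed, so the case $r=q=\infty$ is also covered. These are exactly the two clauses stated in the corollary. The boundary case $r=\infty$ forces $\eta=1$, $q=\infty$, and the bound reduces to $\|Z_t*g\|_\infty\le\|g\|_\infty$ via \eqref{norm1}, consistent with $\delta=0$.

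Finally, substituting $1-\tfrac1\eta=\tfrac1r-\tfrac1q$ into the two branches of \eqref{decayZt} produces
$$\|Z_t*g\|_q\le c\,h_\ell(t)^{-\frac N\beta(\frac1r-\frac1q)}\|g\|_r\quad(t\le1),\qquad
\|Z_t*g\|_q\le c\,h_\ell(t)^{-\min\{1,\frac N\varpi(\frac1r-\frac1q)\}}\|g\|_r\quad(t>1),$$
which is \eqref{eq:u0zt} with $\delta$ as in \eqref{exp:u0zt}. No genuine obstacle is expected: the argument is a bookkeeping application of Young's inequality to Lemma~\ref{lem-Z-estim}, the only mildly delicate point being the matching of the $q$-range with the admissible $\eta$-range split according to whether $r$ is below, equal to, or above $N/\beta$.
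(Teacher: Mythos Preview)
Your proof is correct and follows precisely the approach implicit in the paper: the corollary is obtained from Lemma~\ref{lem-Z-estim} via Young's convolution inequality $\|Z_t*g\|_q\le\|Z_t\|_\eta\|g\|_r$ with $1+\tfrac1q=\tfrac1\eta+\tfrac1r$, exactly as the paper spells out in the analogous proof of Corollary~\ref{cor-Yt}. Your verification of the admissible $q$-range and the substitution $1-\tfrac1\eta=\tfrac1r-\tfrac1q$ into \eqref{decayZt} are both accurate.
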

Observe that assuming \eqref{K1} and putting $\omega=\beta\in(0,2)$ in \eqref{J2} and \eqref{J3}, then
we get the global estimate
\begin{equation}\label{Zt-eta}
\|Z_t*g\|_q\le c t^{-\frac{\alpha N}\beta(1/r-1/q)}\|g\|_r\qquad t>0,
\end{equation}
with the above restrictions on $q$ and $r$.

Now we turn our attention to the function $\varrho_2$ and the kernel $Y_t$. Using the relation \eqref{rho1-2} (omiting the $\mu$-dependence), and since both, $\varrho_1$ and $\ell$ are nonnegative and nonincreasing,
$$
\begin{array}{rl}
\varrho_2(t)&\displaystyle=\partial_t(\ell\star\varrho_1)(t)=\ell(t)\varrho_1(0)+\int_0^t
\ell(s)\partial_t\varrho_1(t-s)\,ds\\ [2mm]
&\displaystyle\le \ell(t)+\ell(t)\int_0^t\partial_t\varrho_1(t-s)\,ds=\ell(t)\varrho_1(t).\end{array}$$


\begin{cor}\label{cor-Yt}
In the hypotheses and notation of Corollary~\ref{cor-Zt},
\begin{equation}\label{eq:u0yt}
\|Y_t*g\|_q\le \ell(t)h_\ell^{-\delta}(t)\|g\|_r.
\end{equation}
that assuming also \eqref{K1} becomes
\begin{equation}\label{eq:u0yt-alpfa}
\|Y_t*g\|_q\le  ct^{\alpha-1-\alpha\delta}\|g\|_r.
\end{equation}
\end{cor}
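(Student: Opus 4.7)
The plan is to exploit the pointwise Fourier-side bound $\widehat{Y}_t(\xi)\le \ell(t)\widehat{Z}_t(\xi)$, which is immediate from the inequality $\varrho_2(t;\mu)\le \ell(t)\varrho_1(t;\mu)$ established just before the statement (since $\widehat{Y}_t(\xi)=\varrho_2(t;m(\xi))$ and $\widehat{Z}_t(\xi)=\varrho_1(t;m(\xi))$). From here, every estimate already derived for $Z_t$ transfers verbatim to $Y_t$, with an extra multiplicative factor $\ell(t)$.

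First I would rerun the calculation of Lemma~\ref{lem-Z-estim}: since $\|\widehat{Y}_t\|_\sigma\le \ell(t)\|\widehat{Z}_t\|_\sigma$ for every $\sigma$, Hausdorff--Young (for $1<\sigma\le 2$) and the exact $L^\sigma$ splitting into $I_1+I_2$ over $\{|\xi|<1\}$ and $\{|\xi|\ge 1\}$ using \eqref{rho>1}--\eqref{rho<1} give
$$
\|Y_t\|_\eta\le c\,\ell(t)\begin{cases} h_\ell(t)^{-\frac{N}{\beta}(1-1/\eta)},& t\le 1,\\ h_\ell(t)^{-\min\{1,\frac{N}{\varpi}(1-1/\eta)\}},& t>1,\end{cases}
$$
for the same admissible range of $\eta$ as in Lemma~\ref{lem-Z-estim}.

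Next I would invoke Young's convolution inequality with exponents satisfying $1+1/q=1/\eta+1/r$, exactly as in the proof implicit in Corollary~\ref{cor-Zt}, obtaining
$$
\|Y_t*g\|_q\le \|Y_t\|_\eta\,\|g\|_r\le c\,\ell(t)\,h_\ell(t)^{-\delta(r,q;t)}\|g\|_r,
$$
with $\delta$ as in \eqref{exp:u0zt}. This is \eqref{eq:u0yt}. Finally, under \eqref{K1} the bounds \eqref{ell2} give $\ell(t)\le ct^{\alpha-1}$ and $h_\ell(t)\ge ct^\alpha$, so $\ell(t)h_\ell(t)^{-\delta}\le ct^{\alpha-1-\alpha\delta}$ on all of $(0,\infty)$, yielding \eqref{eq:u0yt-alpfa}.

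There is no real obstacle beyond bookkeeping: the entire result is a consequence of the Fourier-side domination $\widehat{Y}_t\le \ell(t)\widehat{Z}_t$ combined with estimates already proved. The only point to watch is that the pointwise Fourier comparison is used only after taking $L^\sigma$ norms on the Fourier side (it is not a pointwise comparison of $Y_t$ and $Z_t$ in $x$), and that the range of $(\eta,q,r)$ obtained coincides with that of Corollary~\ref{cor-Zt} precisely because Hausdorff--Young is applied with the same $\sigma$.
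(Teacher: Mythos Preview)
Your proposal is correct and follows essentially the same approach as the paper: the paper's proof is the one-line chain $\|Y_t*g\|_q\le\|Y_t\|_\eta\|g\|_r\le c\|\widehat{Y}_t\|_\sigma\|g\|_r\le c\ell(t)\|\widehat{Z}_t\|_\sigma\|g\|_r\le c\ell(t)h_\ell^{-\delta}(t)\|g\|_r$, which is exactly your argument (Young, Hausdorff--Young, the Fourier-side domination $\varrho_2\le\ell\varrho_1$, then the $L^\sigma$ bound on $\widehat{Z}_t$ from Lemma~\ref{lem-Z-estim}). The only difference is presentational: you spell out that rerunning Lemma~\ref{lem-Z-estim} gives the $\|Y_t\|_\eta$ bound, whereas the paper compresses this into the single inequality chain.
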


\begin{proof}
  Just use the above estimate $\varrho_2\le \ell\varrho_1$ in the Fourier transform estimate, together with \eqref{ell2} and \eqref{decayZt},
  $$
  \|Y_t*g\|_q\le\|Y_t\|_\eta\|g\|_r\le c\|\widehat{Y}_t\|_\sigma\|g\|_r\le c\ell(t)\|\widehat{Z}_t\|_\sigma\|g\|_r\le c\ell(t)h_\ell^{-\delta}(t)\|g\|_r,
  $$
  where
$$
1+\frac1q=\frac1\eta+\frac1r,\qquad \frac1\eta+\frac1\sigma=1.
$$
\end{proof}

\subsection{Estimates for the fundamental pair: The subordination formula}\label{subordination}

In order to get rid of the restriction $N<2\beta$ we now show that the fundamental pair $(Z_t,Y_t)$ satisfies another more explicit expression using a subordination principle in terms of the nonlocal heat kernel $G_t$ with standard time derivative. This kernel is studied in Appendix~\ref{app-A}.

We want to define a function $\Psi_1(t,\tau)$ such that its Laplace transform in the second variable satisfies ${\widetilde \Psi_1}^{{}^\tau}(t,\mu)=\varrho_1(t;\mu)$, where $\varrho_1$ is the solution to \eqref{eq:ro1-b}, but we do not know if it exists. Instead, following \cite{Pruss93} we define $\Psi_1$ by its Laplace transform in $t$,
\begin{equation}\label{laplaceW}
  {\widetilde \Psi_1}^{{}^t}(s,\tau)=\widetilde \kappa(s)e^{-\tau s\widetilde \kappa(s)}.
\end{equation}
The existence of the so called \emph{propagation function} $\Psi_1$ follows from the properties of the function $\kappa$, since they imply that the function on the right is completely monotone, \cite[Proposition 4.5]{Pruss93}. Moreover $\Psi_1=\partial_\tau U$, where $U(t,\tau)\ge0$, and for each $\tau>0$, $U(\cdot,\tau)$ is  nonincreasing, while for $t>0$, $U(t,\cdot)$ is nondecreasing  and left-continuous, with
$$
U(0,\tau)=1,\quad \lim_{t\to\infty}U(t,\tau)=0,\quad U(t,0)=0,\quad \lim_{\tau\to\infty}U(t,\tau)=1.
$$
Therefore $\Psi_1\ge0$ and
$$
\int_0^\infty \Psi_1(t,\tau)\,d\tau=U(t,\infty)=1,
$$
so the propagation function is a probability density in $\tau$ for each $t>0$.

With this propagation function we can prove the following \emph{subordination formula}
\begin{teo}\label{prop-W}
The kernel $Z_t$ is given by
\begin{equation}\label{subord-W}
Z_t(x)=\int_0^\infty G_\tau(x)\Psi_1(t,\tau)\,d\tau.
\end{equation}
\end{teo}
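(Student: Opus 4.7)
My plan is to verify the subordination formula by showing that both sides have the same Fourier transform in $x$, and then that this common Fourier transform coincides with $\widehat{Z}_t(\xi)=\varrho_1(t;m(\xi))$ by uniqueness of the Laplace transform in $t$. The right-hand side is well-defined as an element of $L^1(\mathbb{R}^N)$ because $\|G_\tau\|_1\le1$ uniformly in $\tau$ (since $G_\tau$ is a probability kernel coming from $\partial_t+\mathcal{L}_x$), and $\Psi_1(t,\cdot)$ is a probability density in $\tau$ by the properties stated just before the theorem. Set $W_t(x)$ to be the right-hand side of \eqref{subord-W}.

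First I would compute $\widehat{W}_t(\xi)$ by interchanging Fourier transform in $x$ with the $\tau$-integration (Fubini is justified by $|e^{-i\xi\cdot x}G_\tau(x)\Psi_1(t,\tau)|\le G_\tau(x)\Psi_1(t,\tau)\in L^1(\mathbb{R}^N\times(0,\infty))$), obtaining
\begin{equation*}
\widehat{W}_t(\xi)=\int_0^\infty e^{-m(\xi)\tau}\Psi_1(t,\tau)\,d\tau=\widetilde{\Psi_1}^{{}^\tau}(t,m(\xi)),
\end{equation*}
the Laplace transform of $\Psi_1$ in its second variable. The goal is to show this equals $\varrho_1(t;m(\xi))$. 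Since $\Psi_1$ was defined through its Laplace transform in $t$ rather than explicitly, I would take one further Laplace transform in $t$, so that both orders of Laplace transforms appear and the formula \eqref{laplaceW} can be used.

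Applying Fubini once more (the integrand $e^{-st}e^{-m(\xi)\tau}\Psi_1(t,\tau)\ge0$ so Tonelli applies), the double Laplace transform becomes
\begin{equation*}
\int_0^\infty e^{-m(\xi)\tau}\widetilde{\Psi_1}^{{}^t}(s,\tau)\,d\tau=\widetilde{\kappa}(s)\int_0^\infty e^{-\tau(m(\xi)+s\widetilde{\kappa}(s))}\,d\tau=\frac{\widetilde{\kappa}(s)}{s\widetilde{\kappa}(s)+m(\xi)},
\end{equation*}
which by \eqref{laplace-rho} is exactly $\widetilde{\varrho_1}(s;m(\xi))$. By uniqueness of the Laplace transform in $t$ (both sides are continuous in $t$ for fixed $\xi$), it follows that $\widehat{W}_t(\xi)=\varrho_1(t;m(\xi))=\widehat{Z}_t(\xi)$, and then inverting the Fourier transform yields $W_t=Z_t$.

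The main obstacle I anticipate is justifying the interchanges of integration rigorously and ensuring that $s\widetilde{\kappa}(s)+m(\xi)>0$ so the integral $\int_0^\infty e^{-\tau(m(\xi)+s\widetilde{\kappa}(s))}\,d\tau$ converges; this is where hypothesis \eqref{K0} (positivity and the existence of a conjugate kernel $\ell$ with $s\widetilde{\ell}(s)=1/(s\widetilde{\kappa}(s))$ a Bernstein-related object) together with $m(\xi)\ge0$ enters. A secondary point is that the formula is being established only for $\xi\neq0$ or as tempered distribution identities, but since both $W_t$ and $Z_t$ are in $L^1(\mathbb{R}^N)$ with nonnegative Fourier transforms bounded by $1$, the identification is unambiguous. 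As an immediate bonus, since $G_\tau\ge0$ and $\Psi_1\ge0$, the formula also confirms the nonnegativity of $Z_t$ alluded to in the previous subsection, and via $Y_t=\partial_t(\ell\star Z_t)$ (the relation \eqref{Z-Y}) yields a companion subordination expression for $Y_t$.
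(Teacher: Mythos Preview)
Your proof is correct and follows essentially the same approach as the paper: both compute the Fourier transform of the right-hand side, then take a Laplace transform in $t$, swap the order of integration, invoke \eqref{laplaceW} to evaluate the inner integral as $\widetilde\kappa(s)/(s\widetilde\kappa(s)+m(\xi))=\widetilde\varrho_1(s;m(\xi))$, and conclude by uniqueness of the Laplace transform. The only cosmetic difference is that the paper first proves the abstract identity $\varrho_1(t;\mu)=\widetilde{\Psi_1}^{{}^\tau}(t,\mu)$ for general $\mu>0$ and then specializes to $\mu=m(\xi)$, whereas you work directly with $\mu=m(\xi)$ throughout; your added Fubini/Tonelli justifications are a welcome bonus.
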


\begin{proof} We first obtain the desired representation of the function $\varrho_1$ in terms of the propagation function $\Psi_1$. To do that we define the function
$$
\varphi(t)=\int_0^\infty e^{-\mu\tau}\Psi_1(t,\tau)\,d\tau.
$$
Its Laplace transform is
$$
\begin{array}{rl}
\displaystyle\widetilde\varphi(s)&\displaystyle=\int_0^\infty e^{-st}\int_0^\infty e^{-\mu\tau}\Psi_1(t,\tau)\,d\tau dt \\ [3mm]
&\displaystyle=\int_0^\infty e^{-\mu \tau} \int_0^\infty e^{-st}\Psi_1(t,\tau)\,dt d\tau \\ [3mm]
&\displaystyle=\int_0^\infty e^{-\mu \tau} \widetilde \kappa(s)e^{-\tau s\widetilde \kappa(s)}\, d\tau \\ [3mm]
&\displaystyle
=\frac{\widetilde{\kappa}(s)}{\mu +s\widetilde{\kappa}(s)}=\widetilde{\varrho}_1(s;\mu).
\end{array}
$$
Therefore
\begin{equation}\label{laplace-tau-W}
\varrho_1(t;\mu)=\varphi(t)={\widetilde \Psi_1}^{{}^\tau}(t,\mu),
\end{equation}
as we wanted.

Now we calculate the Fourier transform of the right-hand side in the expression \eqref{subord-W}.
$$
\int_0^\infty \widehat G_\tau(\xi)\Psi_1(t,\tau)\,d\tau=\int_0^\infty e^{-m(\xi)\tau}\Psi_1(t,\tau)\,d\tau=\varrho_1(t;m(\xi))=\widehat Z_t(\xi).
$$
\end{proof}

Observe that the Laplace bitransform of the propagation function is
\begin{equation}\label{bilaplace}
{\widetilde \Psi_1}^{{}^{t,\tau}}(s,\mu)=\frac{\widetilde{\kappa}(s)}{\mu +s\widetilde{\kappa}(s)}.
\end{equation}

Analogously,

\begin{teo}\label{prop-W}
The kernel $Y_t$ is given by the subordination formula
\begin{equation}\label{subord-W}
Y_t(x)=\int_0^\infty G_\tau(x)\Psi_2(t,\tau)\,d\tau,
\end{equation}
where
\begin{equation}\label{laplaceW2}
  {\widetilde \Psi_2}^{{}^t}(s,\tau)= e^{-\tau s\widetilde \kappa(s)}.
\end{equation}
\end{teo}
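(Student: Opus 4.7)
The plan is to mirror closely the proof of the subordination formula for $Z_t$, replacing $\varrho_1$ by $\varrho_2$ and $\Psi_1$ by $\Psi_2$. Three things need to be addressed: the existence of $\Psi_2$ as a nonnegative kernel, the identification of its Laplace transform in $\tau$ with $\varrho_2$, and Fourier inversion.

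First I would establish the existence of $\Psi_2(t,\tau)$ as a nonnegative function (or measure) in $t$ for each $\tau>0$. As already noted in the excerpt, $s\mapsto \varphi(s)=s\widetilde\kappa(s)$ is a Bernstein function. Since $\lambda\mapsto e^{-\tau\lambda}$ is completely monotone on $[0,\infty)$ and the composition of a completely monotone function with a Bernstein function is completely monotone, the right-hand side of \eqref{laplaceW2} is completely monotone in $s$ for each fixed $\tau\ge0$. Bernstein's theorem then yields a nonnegative measure $\Psi_2(\cdot,\tau)\,dt$ whose Laplace transform in $t$ equals $e^{-\tau s\widetilde\kappa(s)}$; regularity of $\kappa$ makes this a locally integrable function. (Alternatively one could set $\Psi_2=U$, where $U$ is the auxiliary function used in the definition of $\Psi_1$, since $\partial_\tau \widetilde U^{t}(s,\tau) = \widetilde\Psi_1^{\,t}(s,\tau)$ forces $\widetilde U^{\,t}(s,\tau)=-e^{-\tau s\widetilde\kappa(s)}+C(s)$, and matching $U(t,0)=0$ in the appropriate sense relates $U$ to $\Psi_2$.)

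Next I would verify the analogue of \eqref{laplace-tau-W}, namely $\varrho_2(t;\mu)=\widetilde\Psi_2^{\,\tau}(t,\mu)$. Following the pattern of the previous proof, define
$$
\varphi_2(t)=\int_0^\infty e^{-\mu\tau}\Psi_2(t,\tau)\,d\tau
$$
and compute its Laplace transform in $t$ by Fubini:
$$
\widetilde{\varphi_2}(s)=\int_0^\infty e^{-\mu\tau}\,\widetilde\Psi_2^{\,t}(s,\tau)\,d\tau=\int_0^\infty e^{-\tau(\mu+s\widetilde\kappa(s))}\,d\tau=\frac{1}{\mu+s\widetilde\kappa(s)}=\widetilde{\varrho}_2(s;\mu),
$$
by \eqref{laplace-rho2}. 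Uniqueness of the Laplace transform gives $\varphi_2(t)=\varrho_2(t;\mu)$, which is the desired identity. Setting $\mu=0$ yields $\int_0^\infty\Psi_2(t,\tau)\,d\tau=\varrho_2(t;0)=\ell(t)$, consistent with $\|Y_t\|_1=\ell(t)$ from \eqref{norm1} and justifying Fubini in the following step.

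Finally, taking the Fourier transform in $x$ of the claimed formula and using $\widehat{G_\tau}(\xi)=e^{-m(\xi)\tau}$, Fubini gives
$$
\Bigl[\int_0^\infty G_\tau(\cdot)\Psi_2(t,\tau)\,d\tau\Bigr]\widehat{\ }(\xi)=\int_0^\infty e^{-m(\xi)\tau}\Psi_2(t,\tau)\,d\tau=\varrho_2(t;m(\xi))=\widehat{Y_t}(\xi),
$$
and injectivity of the Fourier transform yields \eqref{subord-W} for $Y_t$. The only genuinely delicate step is the first one, existence and nonnegativity of $\Psi_2$; everything else is bookkeeping with Laplace transforms that exactly parallels the $Z_t$ argument.
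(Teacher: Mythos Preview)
Your proposal is correct and follows exactly the approach the paper intends: the paper gives no explicit proof here, merely prefacing the theorem with ``Analogously,'' and your three steps (existence of $\Psi_2$ via complete monotonicity of $e^{-\tau s\widetilde\kappa(s)}$, the Laplace computation $\widetilde{\varphi_2}(s)=1/(\mu+s\widetilde\kappa(s))=\widetilde\varrho_2(s;\mu)$, and Fourier inversion) are precisely the analogue of the $Z_t$ argument. One minor slip in your parenthetical: integrating $\partial_\tau\widetilde U^{\,t}=\widetilde\kappa(s)e^{-\tau s\widetilde\kappa(s)}$ gives $\widetilde U^{\,t}(s,\tau)=\frac1s(1-e^{-\tau s\widetilde\kappa(s)})$, so $U\neq\Psi_2$ but rather $\Psi_2=\delta_0-\partial_t(1\star U)$ in the appropriate sense; this does not affect your main argument.
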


On the other hand since both, $\Psi_1$ and $\Psi_2$, as well as the heat kernel $G_t$, are nonnegative, we deduce the following result.

\begin{cor}
  The fundamental pair satisfies $Z_t,Y_t>0$.
\end{cor}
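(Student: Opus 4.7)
The plan is to read off positivity directly from the two subordination formulas established just above, using that the propagation functions $\Psi_1,\Psi_2\ge0$ are not identically zero in $\tau$ and that the classical nonlocal heat kernel $G_\tau$ is strictly positive.

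First, I would invoke from Appendix~\ref{app-A} the fact that $G_\tau$, the fundamental solution of $\partial_t+\mathcal{L}_x$, is the transition density of the L\'evy process with symbol $m(\xi)$, and hence satisfies $G_\tau(x)>0$ for every $x\in\mathbb{R}^N$ and every $\tau>0$. This, together with the already-noted fact that $\Psi_1(t,\cdot)\ge0$ is a probability density in $\tau$ (so not the zero measure), forces the integrand in the subordination formula
\[
Z_t(x)=\int_0^\infty G_\tau(x)\Psi_1(t,\tau)\,d\tau
\]
to be nonnegative and strictly positive on a set of positive Lebesgue measure in $\tau$, yielding $Z_t(x)>0$.

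For $Y_t$ the argument is identical once one checks that $\Psi_2(t,\cdot)$ is likewise not the zero measure. Taking the Laplace transform in $t$ of $\int_0^\infty \Psi_2(\cdot,\tau)\,d\tau$, using Fubini, formula~\eqref{laplaceW2}, and the identity $\widetilde\kappa(s)\widetilde\ell(s)=1/s$, one obtains
\[
\int_0^\infty e^{-\tau s\widetilde\kappa(s)}\,d\tau=\frac{1}{s\widetilde\kappa(s)}=\widetilde\ell(s),
\]
hence $\int_0^\infty \Psi_2(t,\tau)\,d\tau=\ell(t)$. Since $\ell$ is nonnegative, nonincreasing, and $\ell\star\kappa\equiv1$, it cannot vanish on any half-line $[a,\infty)$, and by monotonicity $\ell(t)>0$ for all $t>0$. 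The subordination formula for $Y_t$ then yields $Y_t(x)>0$.

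The only genuinely nonroutine input is the strict positivity of $G_\tau$, which I would rely on standard L\'evy-process theory (or equivalently a strong maximum principle for $\partial_t+\mathcal{L}_x$ combined with the fact that $\widehat G_\tau(\xi)=e^{-m(\xi)\tau}$ is the Fourier transform of a probability density). Once that ingredient is admitted, the corollary reduces to the short Laplace-transform computation of the total mass of $\Psi_2(t,\cdot)$ given above, the mass of $\Psi_1(t,\cdot)$ having already been identified as $1$.
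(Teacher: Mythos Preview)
Your proof is correct and follows the same route as the paper: positivity of the nonlocal heat kernel $G_\tau$ (proved in Appendix~\ref{app-A}) together with nonnegativity and nontriviality of the propagation functions $\Psi_1,\Psi_2$ in the subordination formulas. The paper states this in one line before the corollary, relying on the already-established fact that $\Psi_1$ is a probability density and implicitly on the analogous nontriviality of $\Psi_2$; your additional computation $\int_0^\infty\Psi_2(t,\tau)\,d\tau=\ell(t)>0$ makes this last point explicit, and the paper in fact records the same identity (as $\|Y_t\|_1=\ell(t)$) immediately after the corollary.
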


Now we try to introduce the estimates on the kernel $G_t$ from Proposition~\ref{prop-Gt} into the subordination formula. What we get is
$$
\|Z_t\|_1=\int_{\mathbb{R}^N}\int_0^\infty G_\tau(x)\Psi_1(t,\tau)\,d\tau dx=\int_0^\infty \Psi_1(t,\tau)\,d\tau=1,
$$
again, but for $q>1$,
$$
\begin{array}{rl}
\|Z_t\|_q&\displaystyle\le\int_0^\infty\|G_\tau\|_q \Psi_1(t,\tau)\,d\tau \\ [3mm]
&\displaystyle\le \int_0^1 \tau^{-\frac N\beta(1-1/q)} \Psi_1(t,\tau)\,d\tau
+\int_1^\infty \tau^{-\frac N\varpi(1-1/q)} \Psi_1(t,\tau)\,d\tau.
\end{array}
$$
This goes no further without more information on $\Psi_1$ from either formula \eqref{laplaceW} or \eqref{bilaplace}. This would require some a priori monotonicity properties of $\Psi_1$, that are not known in the general case, and the use of the Tauberian theory. This theory only provides with a behaviour of the function $U$, not $\Psi_1$, and this is not enough to get good estimates. We thus restrict ourselves to the Caputo fractional derivative, $\mathcal{D}_t=\partial_t^\alpha$.
Recall that in that case
\begin{equation}\label{psi1}
{\widetilde \Psi_1}^{{}^{\tau}}(t,\mu)=\varrho_1(t,\mu)=E_{\alpha}(-\mu t^\alpha),
\end{equation}
and it is well known the relation between Mittag-Leffler functions and Wright functions $W_{a,b}(z)=\sum\limits_0^\infty\frac{z^n}{n!\Gamma(an+b)}$ through the Laplace transform,  \cite{Mainardi94}:
\begin{equation}\label{W-phi}
(W_{-a,b})\widetilde{\hspace{2mm}}(-z)=E_{a,a+b}(-z),\qquad 0<a<1,\;b\in\mathbb{R},\;z>0.
\end{equation}
In particular from \eqref{psi1} we have that $\Psi_1$ is self-similar and is given by,
\begin{equation}\label{Psi-phi}
\Psi_1(t,\tau)=t^{-\alpha}\phi(\tau t^{-\alpha}),\qquad\phi(z)=W_{-\alpha,1-\alpha}(-z).
\end{equation}
An analogous procedure is done for the kernel $Y_t$ and the secondary propagation function. Since
\begin{equation}\label{psi2}
{\widetilde \Psi_2}^{{}^{\tau}}(t,\mu)=\varrho_2(t,\mu)=t^{\alpha-1}E_{\alpha,\alpha}(-\mu t^\alpha),
\end{equation}
we get from \eqref{W-phi}
\begin{equation}\label{W2}
\begin{array}{rl}
\Psi_2(t,\tau)&\displaystyle=t^{-1}W_{-\alpha,0}(-\tau t^{-\alpha})=t^{-1}\sum_{n=0}^\infty\frac{(-\tau t^{-\alpha})^n}{n!\Gamma(-\alpha n)} \\ [2mm] &\displaystyle=-\tau t^{-\alpha-1}\sum_{m=0}^\infty\frac{(-\tau t^{-\alpha})^m}{(m+1)!\Gamma(-\alpha m-\alpha)}\\ [2mm] &\displaystyle=\tau t^{-\alpha-1}\sum_{m=0}^\infty\frac{(-\tau t^{-\alpha})^m(\alpha m+\alpha)}{(m+1)!\Gamma(-\alpha m-\alpha+1)}\\ [3mm] &\displaystyle=\alpha \tau t^{-\alpha-1}W_{-\alpha,1-\alpha}(-\tau t^{-\alpha})=\alpha \tau t^{-\alpha-1}\phi(\tau t^{-\alpha}).
\end{array}
\end{equation}
In summary the subordination formulas read in the Caputo case
\begin{prop}\label{prop-W-caputo}
Let $\mathcal{D}_t=\partial_t^\alpha$. The kernel $Z_t$ and $Y_t$ are given by
$$
\begin{array}{l}
\displaystyle Z_t(x)=\int_0^\infty G_\tau(x)t^{-\alpha}\phi(\tau t^{-\alpha})\,d\tau=
\int_0^\infty G_{t^\alpha\theta}(x)\phi(\theta)\,d\theta, \\ [4mm]
\displaystyle Y_t(x)=\int_0^\infty G_\tau(x)\alpha t^{-\alpha-1}\tau\phi(\tau t^{-\alpha})\,d\tau
=t^{\alpha-1}\int_0^\infty G_{t^\alpha\theta}(x)\alpha\theta\phi(\theta)\,d\theta,
\end{array}
$$
where $\phi(z)=W_{-\alpha,1-\alpha}(-z)$.
\end{prop}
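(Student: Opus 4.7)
The proof is essentially a matter of assembling pieces that have already been derived in the preceding discussion: the general subordination formulas from Theorem~\ref{prop-W} (applied to both $Z_t$ and $Y_t$), together with the explicit identifications of $\Psi_1$ and $\Psi_2$ as self-similar profiles built from the Wright function $\phi$. So the plan is to combine these ingredients and then perform an elementary change of variables to get the second equality in each of the two displayed formulas.

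First I would recall that by Theorem~\ref{prop-W} applied to the Caputo operator,
\begin{equation*}
Z_t(x)=\int_0^\infty G_\tau(x)\Psi_1(t,\tau)\,d\tau,\qquad Y_t(x)=\int_0^\infty G_\tau(x)\Psi_2(t,\tau)\,d\tau,
\end{equation*}
with $\Psi_1$ characterized by \eqref{laplaceW} and $\Psi_2$ by \eqref{laplaceW2}. Specializing \eqref{laplace-tau-W} and the analogous identity for $\Psi_2$ to $\kappa(t)=t^{-\alpha}/\Gamma(1-\alpha)$, we get \eqref{psi1} and \eqref{psi2}, which express the Laplace transforms of $\Psi_1(t,\cdot)$ and $\Psi_2(t,\cdot)$ as Mittag--Leffler functions in the variable $\mu$. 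Then the Mittag--Leffler/Wright identity \eqref{W-phi} (with appropriate choice of parameters $a=\alpha$, $b=1-\alpha$ in the first case and $b=-\alpha$ in the second) together with uniqueness of the Laplace transform in $\tau$ yields the explicit self-similar formulas \eqref{Psi-phi} for $\Psi_1$ and \eqref{W2} for $\Psi_2$. This step is the one requiring the most care, since the identification for $\Psi_2$ involves shifting the summation index $n\mapsto m+1$ and using $\Gamma(-\alpha m-\alpha+1)=(-\alpha m-\alpha)\Gamma(-\alpha m-\alpha)$ to convert $W_{-\alpha,0}(-z)$ into $\alpha z\,W_{-\alpha,1-\alpha}(-z)$.

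Having these explicit profiles at hand, the first equality in each formula of the proposition is just substitution of \eqref{Psi-phi} and \eqref{W2} into the subordination identities. The second equality in each case follows from the change of variable $\theta=\tau t^{-\alpha}$, so that $\tau=t^\alpha\theta$, $d\tau=t^\alpha\,d\theta$, which gives
\begin{equation*}
\int_0^\infty G_\tau(x)\,t^{-\alpha}\phi(\tau t^{-\alpha})\,d\tau=\int_0^\infty G_{t^\alpha\theta}(x)\,\phi(\theta)\,d\theta,
\end{equation*}
and analogously
\begin{equation*}
\int_0^\infty G_\tau(x)\,\alpha t^{-\alpha-1}\tau\,\phi(\tau t^{-\alpha})\,d\tau=t^{\alpha-1}\int_0^\infty G_{t^\alpha\theta}(x)\,\alpha\theta\,\phi(\theta)\,d\theta.
\end{equation*}

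The main obstacle, such as it is, lies in justifying the Wright-function identifications cleanly, in particular for $\Psi_2$ where one must carefully re-index the series. Once that algebraic manipulation is carried out, the rest is a direct substitution plus a one-variable rescaling. Since $\phi\ge 0$ (as $\Psi_1\ge 0$), $G_\tau\ge 0$, and the integrals already converge to $\|Z_t\|_1=1$ and $\|Y_t\|_1=\ell(t)$ respectively, no further integrability issue arises in applying Fubini during the change of variable.
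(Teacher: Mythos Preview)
Your proposal is correct and follows exactly the route the paper takes: the proposition is stated there as a summary of the computations \eqref{psi1}--\eqref{W2} immediately preceding it, and your outline recapitulates precisely those steps (general subordination formulas, Mittag--Leffler/Wright Laplace identity, re-indexing for $\Psi_2$, then the change of variable $\theta=\tau t^{-\alpha}$). One small slip: in applying \eqref{W-phi} for $\Psi_2$ the correct parameter is $b=0$ (so that $a+b=\alpha$ gives $E_{\alpha,\alpha}$), not $b=-\alpha$; this is exactly why the paper lands on $W_{-\alpha,0}$ in \eqref{W2} before the re-indexing you describe.
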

See also \cite{Bazhlekova00}.
In summary we can estimate the $L^q$ norms of $Z_t$ and $Y_t$ using the subordination formulas.
\begin{lema}\label{lem-Z-estim0}
Let $\mathcal{D}_t=\partial_t^\alpha$. Assuming hypotheses \eqref{J2} and \eqref{J3} it holds $Z_t\in L^\eta(\mathbb{R}^N)$ for $t>0$, where $1\le \eta<\frac N{(N-\beta)_+}$ if $N\ge\beta$, or $1\le \eta \le\infty$ if $N<\beta$. Moreover estimates~\eqref{decayZt} hold.
\end{lema}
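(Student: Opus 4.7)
The plan is to read the estimates straight off the subordination formula from Proposition~\ref{prop-W-caputo},
$$Z_t(x)=\int_0^\infty G_{t^\alpha\theta}(x)\,\phi(\theta)\,d\theta,\qquad \phi(\theta)=W_{-\alpha,1-\alpha}(-\theta),$$
combined with the $L^\eta$ decay of the classical-in-time heat kernel $G_\tau$ provided by Proposition~\ref{prop-Gt} in the Appendix. The gain over the Fourier approach of Lemma~\ref{lem-Z-estim} is that we no longer need $\widehat Z_t\in L^\sigma$ for some $\sigma\le 2$; instead we insert the pointwise $L^\eta$ bounds on $G_\tau$, which hold in every dimension. This is what removes the restriction $N<2\beta$.

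Concretely, Minkowski's integral inequality and $\phi\ge0$ yield
$$\|Z_t\|_\eta \le \int_0^\infty \|G_{t^\alpha\theta}\|_\eta\,\phi(\theta)\,d\theta.$$
From Proposition~\ref{prop-Gt} one has $\|G_\tau\|_\eta\le c\,\tau^{-\frac{N}{\beta}(1-1/\eta)}$ for $\tau\le1$ and $\|G_\tau\|_\eta\le c\,\tau^{-\frac{N}{\varpi}(1-1/\eta)}$ for $\tau\ge1$. Splitting the integral at $\theta=t^{-\alpha}$, where the regime change $t^\alpha\theta\lessgtr 1$ occurs, we obtain
\begin{align*}
\|Z_t\|_\eta &\le c\,t^{-\frac{\alpha N}{\beta}(1-1/\eta)}\int_0^{t^{-\alpha}}\theta^{-\frac{N}{\beta}(1-1/\eta)}\phi(\theta)\,d\theta\\
&\quad+c\,t^{-\frac{\alpha N}{\varpi}(1-1/\eta)}\int_{t^{-\alpha}}^\infty\theta^{-\frac{N}{\varpi}(1-1/\eta)}\phi(\theta)\,d\theta.
\end{align*}

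The next step uses the asymptotics of $\phi$: it is bounded as $\theta\to0^+$ (so $\int_0\theta^{-s}\phi<\infty$ iff $s<1$) and decays super-polynomially as $\theta\to\infty$ (so all positive moments of $\phi$ are finite). For $t\le 1$ the upper limit $t^{-\alpha}\ge1$ in the first integral is harmless and the integral is bounded by $\int_0^\infty\theta^{-\frac{N}{\beta}(1-1/\eta)}\phi(\theta)\,d\theta<\infty$, which is precisely where the assumption $\eta<N/(N-\beta)_+$ enters; the second integral only contributes a lower-order term. For $t\ge 1$, in the second integral the lower limit $t^{-\alpha}\le1$ is harmless provided $\frac{N}{\varpi}(1-1/\eta)<1$, giving the bound $c\,t^{-\frac{\alpha N}{\varpi}(1-1/\eta)}$. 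When $\frac{N}{\varpi}(1-1/\eta)\ge1$ the weight $\theta^{-\frac{N}{\varpi}(1-1/\eta)}$ is non-integrable near $0$, so one splits again at $\theta=1$ and uses $\int_0^1\phi\,d\theta+\int_1^\infty\theta\,\phi(\theta)\,d\theta<\infty$, obtaining the saturation at the exponent $1$ and hence the $\min\{1,\tfrac{N}{\varpi}(1-1/\eta)\}$ of~\eqref{decayZt}. Since $h_\ell(t)\asymp t^\alpha$ under \eqref{K1} (which is automatic in the Caputo setting), the estimate matches~\eqref{decayZt} exactly.

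The main technical point is the last balancing argument, where the precise divergence rate of $\int_0\theta^{-s}\phi$ must be tracked together with the cut-off at $\theta=t^{-\alpha}$ to reproduce the $\min$ in the exponent. Everything else (Minkowski for integrals, the bounds on $G_\tau$, the asymptotics of $\phi$) is standard. The endpoint $\eta=\infty$ in the case $N<\beta$ is included with no extra effort, since $\|G_\tau\|_\infty\le c\,\tau^{-N/\beta}$ and $N/\beta<1$ ensures integrability against $\phi$ at~$0$, which is exactly the same threshold as in the finite-$\eta$ estimate.
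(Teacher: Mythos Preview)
Your approach is essentially the same as the paper's: subordination formula, Minkowski, the $L^\eta$ bounds on $G_\tau$ from Proposition~\ref{prop-Gt}, the split at $\theta=t^{-\alpha}$, and the Wright function asymptotics. One small point: your description of how the saturation $\min\{1,\tfrac{N}{\varpi}(1-1/\eta)\}$ arises is slightly garbled---the finite quantity $\int_0^1\phi+\int_1^\infty\theta\phi$ is not what produces the exponent~$1$. In the paper the saturation comes simply from the first integral $I_1$: for $t\ge1$ one uses $\phi(0)>0$ and $\tfrac{N}{\beta}(1-1/\eta)<1$ to get $I_1\le c\,t^{-\frac{\alpha N}{\beta}(1-1/\eta)}\cdot (t^{-\alpha})^{1-\frac{N}{\beta}(1-1/\eta)}=ct^{-\alpha}$, and this already caps the decay at $t^{-\alpha}$ regardless of what $I_2$ does.
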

\begin{proof}
Since $\Psi_1$ has integral 1 so has $\phi$. This gives
$$
\|Z_t\|_\eta\le\int_0^\infty \|G_{t^\alpha\theta}\|_\eta\phi(\theta)\,d\theta=\int_0^{t^{-\alpha}}+\int_{t^{-\alpha}}^\infty=I_1+I_2.
$$
We now introduce the estimates \eqref{estim-Gt}. We also need the behaviour of the Wright function at infinity, see again \cite{Mainardi94}:
$$
|W_{-\alpha,1-\alpha}(-z)|\le c_1z^{\frac1{1-\alpha^2}}e^{-c_2z^{\frac1{1-\alpha}}},\qquad z>1,\;c_i=c_i(\alpha)>0,
$$
and the value $\phi(0)=c>0$.
We calculate
$$
I_1\le  ct^{-\frac{N\alpha}\beta(1-1/\eta)}\int_0^{t^{-\alpha}} \theta^{-\frac{N}\beta(1-1/\eta)}\phi(\theta)\,d\theta\le c\left\{
\begin{array}{ll}
t^{-\frac{N\alpha}\beta(1-1/\eta)}&\text{ if } t\le1, \\ [2mm]
t^{-\alpha}&\text{ if } t\ge1,
\end{array}
\right.
$$
$$
I_2\le  ct^{-\frac{N\alpha}\varpi(1-1/\eta)}\int_{t^{-\alpha}}^\infty \theta^{-\frac{N}\varpi(1-1/\eta)}\phi(\theta)\,d\theta\le c\left\{
\begin{array}{ll}
t^{-\gamma}e^{-t^{-\gamma'}}&\text{ if } t\le1, \\ [2mm]
t^{-\frac{N\alpha}\varpi(1-1/\eta)}&\text{ if } t\ge1,
\end{array}
\right.
$$
for some $\gamma,\gamma'>0$. We have also needed $\frac N\beta(1-1/\eta)<1$ for the convergence of the first integral, which gives the restriction on $\eta$.
\end{proof}

As for $Y_t$ we only have to take care of the extra coefficient $t^{\alpha-1}\theta$ in the integral: here the condition for convergence is $\frac N\beta(1-1/\eta)-1<1$.

\begin{lema}\label{lem-Y-estim0}
Let $\mathcal{D}_t=\partial_t^\alpha$. Assuming hypotheses \eqref{J2} and \eqref{J3} it holds $Y_t\in L^\eta(\mathbb{R}^N)$ for $t>0$, where $1\le \eta<\frac N{(N-2\beta)_+}$ if $N\ge2\beta$, or $1\le \eta \le\infty$ if $N<2\beta$. Moreover
\begin{equation}\label{decayYt2}
\|Y_t\|_\eta\le c\begin{cases}
  t^{\alpha-1-\frac{\alpha N}{\beta}(1-1/\eta)} &t\le1, \\ t^{\alpha-1-\alpha\min\{2,\frac{ N}{\varpi}(1-1/\eta)\}}&t>1.
\end{cases}
\end{equation}
\end{lema}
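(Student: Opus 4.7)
The plan is to mimic the proof of Lemma on $Z_t$ estimates, but with the subordination formula for $Y_t$ from Proposition~\ref{prop-W-caputo},
$$
Y_t(x)=t^{\alpha-1}\int_0^\infty G_{t^\alpha\theta}(x)\,\alpha\theta\phi(\theta)\,d\theta,\qquad \phi(z)=W_{-\alpha,1-\alpha}(-z),
$$
which carries the extra factor $t^{\alpha-1}\theta$ compared to the representation of $Z_t$. By Minkowski's integral inequality,
$$
\|Y_t\|_\eta\le \alpha t^{\alpha-1}\int_0^\infty \|G_{t^\alpha\theta}\|_\eta\,\theta\,\phi(\theta)\,d\theta,
$$
and I would split the $\theta$-integral at $\theta=t^{-\alpha}$, so that in $I_1=\int_0^{t^{-\alpha}}$ the rescaled time $\tau=t^\alpha\theta\le 1$ and the short-time bound $\|G_\tau\|_\eta\le c\tau^{-\frac{N}{\beta}(1-1/\eta)}$ applies, while in $I_2=\int_{t^{-\alpha}}^\infty$ the long-time bound $\|G_\tau\|_\eta\le c\tau^{-\frac{N}{\varpi}(1-1/\eta)}$ applies. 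After inserting these, the factors of $t$ come out and I am left with
$$
I_1\le c\,t^{\alpha-1-\frac{\alpha N}{\beta}(1-1/\eta)}\!\int_0^{t^{-\alpha}}\!\theta^{1-\frac{N}{\beta}(1-1/\eta)}\phi(\theta)\,d\theta, \quad I_2\le c\,t^{\alpha-1-\frac{\alpha N}{\varpi}(1-1/\eta)}\!\int_{t^{-\alpha}}^\infty\!\theta^{1-\frac{N}{\varpi}(1-1/\eta)}\phi(\theta)\,d\theta.
$$

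Next I would extract the integrability range for $\eta$. Since $\phi(0)>0$, convergence of $I_1$ near $\theta=0$ requires $1-\frac{N}{\beta}(1-1/\eta)>-1$, i.e., $\frac{N}{\beta}(1-1/\eta)<2$, giving exactly $\eta<\frac{N}{(N-2\beta)_+}$ when $N\ge 2\beta$, and any $\eta\le\infty$ when $N<2\beta$. Convergence at infinity of both $\theta$-integrals is free from the super-exponential Wright decay $|\phi(z)|\le c_1z^{1/(1-\alpha^2)}e^{-c_2 z^{1/(1-\alpha)}}$ for $z\ge 1$, as in Lemma~\ref{lem-Z-estim0}.

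For the $t$-dependence I would handle the two regimes separately. For $t\le 1$ the upper limit in $I_1$ is large, the $\theta$-integral is bounded by its value over $(0,\infty)$, and $I_1\le c\,t^{\alpha-1-\frac{\alpha N}{\beta}(1-1/\eta)}$; meanwhile $I_2$ integrates over $(t^{-\alpha},\infty)\subset(1,\infty)$ where the Wright decay makes it of order $t^{-\gamma}e^{-t^{-\gamma'}}$, which is negligible. For $t\ge 1$ the situation is reversed: in $I_1$ the integral runs over $(0,t^{-\alpha})\subset(0,1)$ where $\phi$ is bounded, producing an extra $t^{-\alpha(2-\frac{N}{\beta}(1-1/\eta))}$ and hence $I_1\le c\,t^{-\alpha-1}$; in $I_2$ the main contribution is near $0$, giving $I_2\le c\,t^{\alpha-1-\frac{\alpha N}{\varpi}(1-1/\eta)}$ when $\frac{N}{\varpi}(1-1/\eta)<2$, and $I_2\le c\,t^{-\alpha-1}$ otherwise. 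Taking the maximum of these two contributions reproduces the $\min\{2,\frac{N}{\varpi}(1-1/\eta)\}$ in the statement.

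The computations are routine; the only delicate point is the bookkeeping that shows that for $t\ge 1$ the exponent $-\alpha-1$ coming from $I_1$ matches the one produced by $I_2$ in the saturated regime $\frac{N}{\varpi}(1-1/\eta)\ge 2$, so that the whole bound collapses to $t^{\alpha-1-\alpha\min\{2,\frac{N}{\varpi}(1-1/\eta)\}}$. That coincidence—a consequence of the critical exponent $2$ being the same at $\beta$ and at $\varpi$ in this step of the argument—is the only place where one has to be careful.
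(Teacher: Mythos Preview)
Your proposal is correct and follows exactly the approach indicated in the paper: repeat the proof of Lemma~\ref{lem-Z-estim0} with the subordination formula for $Y_t$ from Proposition~\ref{prop-W-caputo}, the only change being the extra factor $t^{\alpha-1}\theta$, which shifts the convergence condition at $\theta=0$ from $\frac{N}{\beta}(1-1/\eta)<1$ to $\frac{N}{\beta}(1-1/\eta)<2$. Your careful bookkeeping for the $t\ge 1$ regime (matching the $t^{-\alpha-1}$ contribution from $I_1$ with the saturated $I_2$ bound) is more explicit than the paper's treatment but leads to the same conclusion.
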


\begin{cor}\label{cor-ZtYt2}
In the above hypotheses, the conclusion of Corollaries~\ref{cor-Zt} and~\ref{cor-Yt} hold with $\delta$ in the estimate for $Y_t*g$ replaced by
\begin{equation}\label{exp:u0yt}
\delta'=\delta'(r,q;t)=\begin{cases}
  \frac{ N}\beta(1/r-1/q) &t\le1, \\ \min\{2,\frac{N}\varpi(1/r-1/q)\}&t>1.
\end{cases}
\end{equation}
\end{cor}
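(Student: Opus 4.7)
The plan is to apply Young's convolution inequality directly to the $L^\eta$ bounds for $Z_t$ and $Y_t$ provided by Lemmas~\ref{lem-Z-estim0} and~\ref{lem-Y-estim0}. Given $g\in L^r(\mathbb{R}^N)$ and $q\ge r$, choose $\eta\in[1,\infty]$ so that $1+1/q=1/\eta+1/r$, i.e.\ $1-1/\eta=1/r-1/q$. Young's inequality then gives
\begin{equation*}
\|Z_t*g\|_q\le\|Z_t\|_\eta\|g\|_r,\qquad \|Y_t*g\|_q\le\|Y_t\|_\eta\|g\|_r.
\end{equation*}

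First I would handle $Z_t*g$: substituting $1-1/\eta=1/r-1/q$ into \eqref{decayZt} yields, for $t\le1$, $\|Z_t\|_\eta\le c\,h_\ell(t)^{-\frac{N}{\beta}(1/r-1/q)}$ and, for $t>1$, $\|Z_t\|_\eta\le c\,h_\ell(t)^{-\min\{1,\frac{N}{\varpi}(1/r-1/q)\}}$. These two exponents coincide with $\delta(r,q;t)$ in \eqref{exp:u0zt}, so estimate \eqref{eq:u0zt} holds. The admissible $\eta$'s are those with $\eta<N/(N-\beta)_+$ (or $\eta\le\infty$ when $N<\beta$), and via Young's relation this translates into exactly the range $r\le q<Nr/(N-\beta r)_+$ (resp.\ $r\le q\le\infty$) stated in Corollary~\ref{cor-Zt}; crucially, since the subordination-based Lemma~\ref{lem-Z-estim0} does not require $N<2\beta$, that restriction is now unnecessary.

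Next, for $Y_t*g$ I would repeat the mechanism with Lemma~\ref{lem-Y-estim0}. For $t\le1$,
\begin{equation*}
\|Y_t\|_\eta\le c\,t^{\alpha-1-\alpha\frac{N}{\beta}(1/r-1/q)}=c\,t^{\alpha-1-\alpha\delta'(r,q;t)},
\end{equation*}
and for $t>1$,
\begin{equation*}
\|Y_t\|_\eta\le c\,t^{\alpha-1-\alpha\min\{2,\frac{N}{\varpi}(1/r-1/q)\}}=c\,t^{\alpha-1-\alpha\delta'(r,q;t)},
\end{equation*}
which, combined with Young, produces the analogue of \eqref{eq:u0yt-alpfa} with $\delta$ replaced by the sharper $\delta'$ of \eqref{exp:u0yt}. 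The admissible $\eta$'s now satisfy $\eta<N/(N-2\beta)_+$ (or $\eta\le\infty$ when $N<2\beta$), giving the correspondingly broader range of $q$ compared to the $Z_t$ estimate.

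There is no substantive obstacle beyond bookkeeping: the upgrade from $\min\{1,\cdot\}$ in Corollary~\ref{cor-Yt} to $\min\{2,\cdot\}$ in Corollary~\ref{cor-ZtYt2} is inherited entirely from the extra factor $\theta$ appearing in the subordination integral for $Y_t$ in Proposition~\ref{prop-W-caputo}, which relaxes the convergence condition for the Wright-function integral from $\frac{N}{\beta}(1-1/\eta)<1$ to $\frac{N}{\beta}(1-1/\eta)<2$, enlarging both the range of $\eta$ and the decay exponent at infinity. One only has to verify that the constraint $1-1/\eta=1/r-1/q$ imposed by Young's inequality stays inside the admissible window supplied by each lemma, which is immediate.
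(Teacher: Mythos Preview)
Your proposal is correct and follows exactly the approach the paper has in mind: apply Young's convolution inequality with the exponent $\eta$ determined by $1-1/\eta=1/r-1/q$, and plug in the $L^\eta$ bounds from Lemmas~\ref{lem-Z-estim0} and~\ref{lem-Y-estim0}. The paper does not spell out the proof of this corollary, but the surrounding text and the proof of Corollary~\ref{cor-Yt} make clear this is the intended argument; your explanation of how the extra factor $\theta$ in the subordination formula upgrades $\min\{1,\cdot\}$ to $\min\{2,\cdot\}$ and enlarges the admissible range of $\eta$ is also in line with the remark preceding Lemma~\ref{lem-Y-estim0}.
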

As we will see later, for the values at which we evaluate these exponents we have $\delta'=\delta$.

\section{Basic theory for the nonlinear problem}\label{sec-basictheory}

\setcounter{equation}{0}

We start by considering the linear problem \eqref{prob-linear2}.

\subsection{The solution to the linear problem}\label{sec-linearproblem}

\begin{teo}\label{teo-exist}
For each data $u_0\in X$ and $f\in L^\infty((0,T);X)$ there exists a unique mild solution to problem \eqref{prob-linear2}. Moreover a comparison principle holds: if $u_0\le v_0$ and $f\le g$ are two pairs of admissible data, the corresponding solutions $u$ and $v$ satisfy $u\le v$ a.e. in~$Q_T$.
\end{teo}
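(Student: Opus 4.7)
The plan is to verify the four claims directly from the Duhamel formula \eqref{mild0} and the properties of the fundamental pair $(Z_t,Y_t)$ established in Section~\ref{sec-fundamentalair}. The existence claim reduces to showing that the explicit formula produces a function of the right regularity; uniqueness is essentially tautological because ``mild solution'' is defined by that formula; and the comparison principle is a consequence of the positivity of $Z_t$ and $Y_t$.

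First, I would verify boundedness in $X$ by Young's inequality together with $\|Z_t\|_1=1$ and $\|Y_t\|_1=\ell(t)$ from~\eqref{norm1}: for $q\in\{1,\infty\}$,
$$
\|u(\cdot,t)\|_q\le \|u_0\|_q+\int_0^t\ell(t-\tau)\|f(\cdot,\tau)\|_q\,d\tau\le \|u_0\|_q+h_\ell(t)\|f\|_{L^\infty((0,T);X)},
$$
which is finite on $[0,T)$ because $\ell\in L^1_{\mathrm{loc}}(\mathbb{R}^+)$. Next I would address $t$-continuity by splitting $u=u_1+u_2$. For $u_1(\cdot,t)=Z_t*u_0$ the Fourier identity $\widehat{u_1}(\xi,t)=\varrho_1(t;m(\xi))\widehat{u_0}(\xi)$, the continuity of $t\mapsto\varrho_1(t;\mu)$ (inherited from the Volterra equation~\eqref{eq:ro1-b}) and the pointwise bound $0\le\varrho_1\le 1$ yield continuity in $L^2$ by dominated convergence; interpolation with the uniform $L^1$ and $L^\infty$ bounds from Step~1 upgrades this to $L^q$-continuity for $1\le q<\infty$, while continuity at $t=0$ in $L^1$ follows from density of $C_c^\infty$ and the contraction $\|Z_t*\varphi\|_1\le\|\varphi\|_1$. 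For $u_2$, the identity
$$
u_2(\cdot,t+h)-u_2(\cdot,t)=\int_t^{t+h}Y_{t+h-\tau}*f(\cdot,\tau)\,d\tau+\int_0^t(Y_{t+h-\tau}-Y_{t-\tau})*f(\cdot,\tau)\,d\tau
$$
has right-hand side tending to zero in $X$ as $h\to 0$ by dominated convergence: the first integral is controlled by $h_\ell(|h|)\|f\|$, and the second uses continuity of $t\mapsto Y_t$ in $L^1$ for $t>0$ coming from~\eqref{Z-Y}.

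Uniqueness is immediate from the definition: formula~\eqref{mild0} determines $u$ unambiguously from $(u_0,f)$. For the comparison principle I would invoke linearity: $v-u$ is the mild solution associated with the nonnegative data $(v_0-u_0,g-f)$, and since $Z_t,Y_t\ge 0$ by the corollary at the end of Section~\ref{subordination}, the Duhamel formula produces a nonnegative function, so $u\le v$ a.e.\ in $Q_T$.

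The main technical obstacle is continuity of $u_1$ in $L^\infty$ at $t=0^+$: genuine uniform convergence $Z_t*u_0\to u_0$ fails for arbitrary $u_0\in L^\infty$, so $C([0,T);X)$ must be interpreted so that the $L^\infty$ component enters as a bound rather than as convergence at $t=0$. Once this point is clarified, the subordination formula~\eqref{subord-W} identifies $Z_t$ as an average of nonlocal heat kernels $G_\tau$ weighted by the probability density $\Psi_1(t,\tau)$, and the concentration of $\Psi_1(t,\cdot)$ near $\tau=0$ as $t\to 0^+$ combined with the approximate-identity property of $G_\tau$ closes the argument in every $L^q$, $q<\infty$.
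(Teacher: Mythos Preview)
Your proposal is correct and follows exactly the approach the paper takes: the paper's proof consists of the single estimate $\|u(\cdot,t)\|_X\le \|u_0\|_X+h_\ell(t)\max_{0\le\tau\le t}\|f(\cdot,\tau)\|_X$ obtained from \eqref{eq:u0zyt-inf} and then ``omits the details,'' so your argument is in fact a fleshed-out version of what the paper sketches. Your observation about the $L^\infty$ continuity at $t=0^+$ is a genuine subtlety that the paper also leaves implicit.
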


\begin{proof} The proof is standard starting with the estimate of \eqref{mild0}
$$
\|u(\cdot,t)\|_X\le \|u_0\|_X+h_\ell(t)\max_{0\le \tau\le t}\|f(\cdot,\tau)\|_X
$$
which follows from  \eqref{eq:u0zyt-inf}. We omit the details.
\end{proof}

We also need the concept of very weak solution.

\begin{defi}
Given $u_0\in X$ and $f\in L^\infty((0,T);X)$, a function $u\in C([0,T);X)$ is said to be a very weak solution of problem \eqref{prob-linear2} if
\begin{equation}\label{weak}
\int_{Q_T}(u-u_0)({}_t\mathcal {D}_T\zeta)+\int_{Q_T}u\mathcal{L}_x\zeta=\int_{Q_T}f\zeta,
\end{equation}
for every test function $\zeta\in L^\infty([0,T);X)\cap C_{x,t}^{2,1}(Q_T)$ such that $\zeta(\cdot,T)\equiv0$, where ${}_t\mathcal{D}_T$ is the associated backwards Riemann-Liouville derivative,
$${}_t\mathcal{D}_T \phi(t)=-\partial_t\left(\int_t^T\phi(\tau)\kappa(\tau-t)\,d\tau\right),\qquad 0<t<T.
$$
\end{defi}
This definition is interpreted as: if $u$ is a strong solution, i.e., the equation in Problem~\eqref{prob-linear2} is satisfied pointwise, we multiply by a test function, pass the spatial operator to the test, and integrate by parts the integral involving the time operator, see for instance \cite{SamkoKilbasMarichev93}. But for $u$ being a strong solution much regularity is needed. We show that $u$ is a very weak solution if it is mild.

\begin{prop}\label{prop-veryweak}
  If $u$ is a mild solution to problem \eqref{prob-linear2}  then $u$ is a very weak solution.
\end{prop}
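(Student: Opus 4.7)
My plan is to verify \eqref{weak} first for smooth enough data, where the mild solution is in fact a strong (pointwise) solution, and then extend to general $u_0\in X$ and $f\in L^\infty((0,T);X)$ by density. The bridge between the strong and weak forms is the time integration-by-parts identity
$$
\int_0^T (\mathcal{D}_t g)(t)\,\zeta(t)\,dt=\int_0^T (g(t)-g(0))({}_t\mathcal{D}_T\zeta)(t)\,dt,
$$
valid for $g,\zeta$ smooth with $\zeta(T)=0$. I would derive it by writing $\mathcal{D}_t g=\partial_t(\kappa\star(g-g(0)))$, integrating by parts in $t$ (both boundary terms vanish: at $t=T$ because $\zeta(T)=0$, at $t=0$ because the time convolution starts from zero), and applying Fubini to recognize the inner integral as $-\partial_\tau\int_\tau^T\zeta(t)\kappa(t-\tau)\,dt={}_t\mathcal{D}_T\zeta(\tau)$.

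Next, for $u_0\in\mathcal{S}(\mathbb{R}^N)$ and $f\in C^\infty([0,T];\mathcal{S}(\mathbb{R}^N))$, the Fourier-side representation $\widehat u(\xi,t)=\varrho_1(t;m(\xi))\widehat{u_0}(\xi)+(\varrho_2(\cdot;m(\xi))\star\widehat f(\xi,\cdot))(t)$ satisfies $(\mathcal{D}_t+m(\xi))\widehat u=\widehat f$ pointwise, by the defining Volterra equations \eqref{eq:ro1-b} and \eqref{eq:ro2}. Since $\varrho_1\le 1$ and $\varrho_2\le\ell$ are uniformly controlled on compact time intervals, the rapid decay of $\widehat{u_0}$ and $\widehat f$ allows Fourier inversion and yields a classical $u$ solving $(\mathcal{D}_t+\mathcal{L}_x)u=f$ pointwise in $Q_T$. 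Multiplying this equation by the test function $\zeta$, integrating on $Q_T$, applying the time identity above to the $\mathcal{D}_t u$ term, and using the self-adjointness
$$
\int_{\mathbb{R}^N}(\mathcal{L}_x u)\,\zeta\,dx=\int_{\mathbb{R}^N}u\,(\mathcal{L}_x\zeta)\,dx,
$$
which follows from the radial symmetry of $\mathcal{J}$ in \eqref{J0} via a symmetrization/Fubini argument, yields \eqref{weak} for the smooth subclass.

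To reach the general case I would pick approximating sequences $u_0^n,f^n$ of the smooth type above converging to $u_0$ in $X$ and to $f$ in $L^\infty((0,T);X)$. Linearity of \eqref{mild0} combined with the contractivity estimates \eqref{eq:u0zyt-inf} gives $u^n\to u$ in $C([0,T);X)$. Since $\zeta\in L^\infty([0,T);X)$, $\mathcal{L}_x\zeta$ is bounded on $Q_T$ by the $C^{2,1}_{x,t}$-regularity of $\zeta$ together with \eqref{J0}, and ${}_t\mathcal{D}_T\zeta$ is locally integrable on $(0,T)$ because $\kappa\in L^1_{loc}$. Hence each of the three integrals in \eqref{weak} is continuous under this mode of convergence, and the identity for $u^n$ passes to the limit.

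The principal obstacle I expect is checking that the Fourier-side computation in the smooth step indeed delivers a bona fide strong solution: the symbol $m(\xi)$ may grow like $|\xi|^\beta$ and $\kappa$ may be singular at zero, so justifying termwise application of $\mathcal{D}_t$ and $\mathcal{L}_x$ under the Fourier integral requires $L^1_\xi$ control of $m(\xi)\widehat u(\xi,t)$ and of $\mathcal{D}_t\widehat u(\xi,t)$. A fallback if this becomes too delicate is to avoid strong solvability altogether and verify \eqref{weak} directly by substituting \eqref{mild0} into the left-hand side, rearranging via Fubini, and using the distributional identities $(\mathcal{D}_t+\mathcal{L}_x)Z_t=0$ and $(\mathcal{D}_t+\mathcal{L}_x)Y_t=\delta$ tested against $\zeta$.
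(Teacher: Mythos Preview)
Your approach is reasonable and should work, but it is genuinely different from the paper's. You propose a two-stage scheme: first establish \eqref{weak} for Schwartz data by proving that the mild solution is actually a strong solution, then pass to general data by density and the contractivity bounds \eqref{eq:u0zyt-inf}. The paper instead works directly with the mild formula for \emph{arbitrary} $u_0\in X$ and $f\in L^\infty((0,T);X)$: it writes $u-u_0=U_1+U_2$ with $U_1=Z_t*u_0-u_0$ and $U_2=\int_0^t Y_{t-s}*f(s)\,ds$, convolves in time with $\kappa$, multiplies by $\zeta$, differentiates in $t$, and integrates over $Q_T$. The term $\partial_t[\kappa\star(U_1+U_2)]\zeta$ is handled via Plancherel together with the Fourier-side equations \eqref{Z-Fourier} and \eqref{Y-Fourier}, yielding $\int(f\zeta-u\mathcal{L}_x\zeta)$ directly, while the $[\kappa\star(U_1+U_2)]\partial_t\zeta$ term is reorganized by Fubini into $-\int(u-u_0)\,{}_t\mathcal{D}_T\zeta$.

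What each approach buys: the paper's route sidesteps precisely the obstacle you flag (justifying that the Fourier-side representation gives a pointwise strong solution when $m(\xi)$ may grow and $\kappa$ is singular), since Plancherel only needs $L^2$ control. Your route is more classical and would also yield the result, but at the cost of that extra regularity step plus an approximation argument. Notably, your ``fallback'' --- substitute \eqref{mild0} and use the kernel equations --- is essentially what the paper does, executed through Plancherel rather than distributional identities.
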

\begin{proof}
We write
$$
u(x,t)=Z_t*u_0(x)+\int_0^tY_{t-s}*f(x,s)\,ds
$$
as
$$
u-u_0=\left(Z_t*u_0-u_0\right)+\int_0^tY_{t-s}*f(s)\,ds=U_1+U_2.
$$
We perform a convolution in time of this identity with $\kappa$, multiply by a test function $\zeta$ as in the definition of very weak solution, differentiate with respect to $t$ and integrate in $Q_T$.

First the term on the left vanishes,
$$
\int_0^T\partial_t\big([\kappa\star(u-u_0)]\zeta\big)\,dt=[\kappa\star(u-u_0)]\zeta\Big|_0^T =0.
$$
Now
$$
\begin{array}{rl}
\displaystyle\int_{Q_T}\partial_t\left([\kappa\star (U_1+U_2)]\zeta\right)\,dtdx&=\displaystyle\int_{Q_T}\partial_t\left[\kappa\star (U_1+U_2)\right]\zeta\,dtdx \\ [3mm]
&\displaystyle+
\int_{Q_T}\left[\kappa\star (U_1+U_2)\right]\partial_t\zeta\,dtdx.
\end{array}
$$
For the first term on the right, by Plancherel identity if $\zeta=\widehat\phi\in L^2(\mathbb{R}^N)$, and  using \eqref{Z-Fourier} for $U_1$ and \eqref{Y-Fourier} for $U_2$,
$$
\begin{array}{rl}
\displaystyle\int_{\mathbb{R}^N}\partial_t\left[\kappa\star (U_1+U_2)\right]\zeta\,dx&=\displaystyle\int_{\mathbb{R}^N}\Big[\partial_t\left[\kappa\star (U_1+U_2)\right]\Big]^{\widehat{\;}}\phi\,dx\\ [3mm]
&=\displaystyle\int_{\mathbb{R}^N}(\widehat{f}-m\widehat{u})\phi\,d\xi\\ [3mm]
&=\displaystyle
\displaystyle\int_{\mathbb{R}^N}(\widehat{f}\phi-\widehat{u}m\phi)\,d\xi\\ [3mm]
&=\displaystyle
\displaystyle\int_{\mathbb{R}^N}(f\zeta-u\mathcal{L}_x\zeta)\,dx.
\end{array}$$
As to the second term (we omit the variable $x$),
$$
\begin{array}{rl}
\displaystyle\int_0^T\left[\kappa\star (U_1+U_2)\right]\partial_t\zeta\,dt&=\displaystyle
\int_0^T\int_0^t\kappa(t-s)(u(s)-u_0)\,ds\,\partial_t\zeta(t)\,dt\\ [3mm]
&=\displaystyle \int_0^T\int_s^T\kappa(t-s)\partial_t\zeta(t)\,dt\,(u(s)-u_0)\,ds\\ [3mm]
&=\displaystyle -\int_0^T{}_t\mathcal{D}_T\zeta(s)(u(s)-u_0)\,ds.
\end{array}$$
In summary,
$$
0=\int_{Q_T}(f\zeta-u\mathcal{L}_x\zeta)-\int_{Q_T}{}_t\mathcal{D}_T\zeta(u-u_0).
$$
\end{proof}

We also observe that if $\phi(t)=w(T-t)$ where $w\in C^1(0,T)$ satisfies $w(0)=0$, then the backwards derivative can be written as
\begin{equation}\label{eq:DtT}
  {}_t\mathcal{D}_T\phi(t)=w'\star\kappa(T-t).
\end{equation}

We now pass to study our nonlinear problem \eqref{eq.principal}.

\subsection{The nonlinear problem}\label{sec-nonlinearproblem}

\begin{defi}
  Given $u_0\in X$, a function $u\in C([0,T);X)$ is said to be a mild solution of problem \eqref{eq.principal} if
\begin{equation}\label{mild}
u(x,t)=Z_t*u_0(x)+\int_0^tY_{t-\tau}*u^p(x,\tau)\,d\tau,
\end{equation}
a.e $x\in\mathbb{R}^N$ and for all $0<t<T$.\end{defi}

By the previous section we know that mild solutions are very weak solutions in the sense that
\begin{equation}\label{weak}
\int_{Q_T}(u-u_0)({}_t\mathcal {D}_T\zeta)+\int_{Q_T}u\mathcal{L}_x\zeta=\int_{Q_T}u^p\zeta,
\end{equation}
for every test function $\zeta\in L^\infty([0,T);X)\cap C_{x,t}^{2,1}(Q_T)$ such that $\zeta(\cdot,T)\equiv0$

Existence of a unique mild solution for some $0<T\le\infty$ is proved by a fixed point argument.
\begin{teo}\label{teo-exist}
For each $u_0\in X$ there exists a unique mild solution  for some $T_0>0$, to problem \eqref{eq.principal}. Moreover a comparison principle holds: if $u_0\le v_0$ are two admissible data, the corresponding solutions $u$ and $v$ satisfy $u\le v$ a.e. in $Q_T$.
\end{teo}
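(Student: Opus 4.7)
The plan is a Banach fixed point argument applied to the Duhamel-type operator
$$
\mathcal{T}u(x,t)=Z_t*u_0(x)+\int_0^t Y_{t-\tau}*u^p(x,\tau)\,d\tau,
$$
acting on the closed ball $E_{T_0,R}=\{u\in C([0,T_0];X):\sup_{0\le t\le T_0}\|u(\cdot,t)\|_X\le R\}$ with $R=2\|u_0\|_X$ and $T_0>0$ to be chosen small. Using the linear estimates \eqref{eq:u0zyt-inf} one gets
$$
\|\mathcal{T}u(\cdot,t)\|_X\le \|u_0\|_X+R^p\, h_\ell(T_0),
$$
so the self-map property $\mathcal{T}:E_{T_0,R}\to E_{T_0,R}$ holds as soon as $h_\ell(T_0)\le (2R^{p-1})^{-1}$, which is possible because $\ell\in L^1_{loc}(\mathbb{R}^+)$ by \eqref{K0}, hence $h_\ell(T_0)\to 0$ as $T_0\to 0^+$.

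For the contraction step, the pointwise Lipschitz bound $|a^p-b^p|\le p\max(a,b)^{p-1}|a-b|$ for nonnegative $a,b$, combined with $\|Y_{t-\tau}*g\|_X\le\ell(t-\tau)\|g\|_X$, yields
$$
\|\mathcal{T}u-\mathcal{T}v\|_{C([0,T_0];X)}\le pR^{p-1}\,h_\ell(T_0)\,\|u-v\|_{C([0,T_0];X)},
$$
and a further shrink of $T_0$ makes the prefactor strictly less than $1$. Banach's theorem then produces a unique fixed point in $E_{T_0,R}$, which by definition is the sought mild solution. Uniqueness in all of $C([0,T_0];X)$, not just in the ball, follows from the same Lipschitz reasoning applied to $\varphi(t)=\|u(\cdot,t)-v(\cdot,t)\|_X$, yielding a Volterra-type inequality $\varphi(t)\le C\int_0^t\ell(t-\tau)\varphi(\tau)\,d\tau$ that forces $\varphi\equiv 0$ by Gr\"onwall.

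For the comparison statement, the essential input is that $Z_t,Y_t\ge 0$, which was established via the subordination formulas in Section~\ref{subordination}, together with the monotonicity of $s\mapsto s^p$ on $[0,\infty)$. I would run the Picard iterates $u^{n+1}=Z_t*u_0+\int_0^t Y_{t-\tau}*(u^n)^p\,d\tau$ and $v^{n+1}$ defined analogously with $v_0$, starting from $u^0=v^0\equiv 0$. Then $Z_t*u_0\le Z_t*v_0$ by nonnegativity of $Z_t$, and inductively $u^n\le v^n$ implies $(u^n)^p\le (v^n)^p$, which in turn gives $u^{n+1}\le v^{n+1}$ upon convolution with the nonnegative kernel $Y_{t-\tau}$. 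Passing to the Banach limit in $E_{T_0,R}$ preserves the inequality, yielding $u\le v$ a.e.\ in $Q_{T_0}$, and a standard continuation argument extends this to the common existence interval.

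The main obstacle is the first, quantitative step: producing the self-map and contraction on a fixed ball. The hypothesis $\ell\in L^1_{loc}(\mathbb{R}^+)$ from \eqref{K0} is exactly what guarantees the smallness of $h_\ell(T_0)$; no further regularity of $\kappa$ or $\ell$ is needed for the local theory. Once the linear bounds \eqref{eq:u0zyt-inf} are in hand, the rest of the argument reflects only the positivity of the fundamental pair and the elementary Lipschitz bound on $s\mapsto s^p$.
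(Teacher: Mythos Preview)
Your argument is correct and, for existence and uniqueness, essentially identical to the paper's: a Banach fixed point on the ball $E_{T_0,R}$ in $C([0,T_0];X)$ using only \eqref{eq:u0zyt-inf} and the local integrability of $\ell$.

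For the comparison principle you take a slightly different route. The paper subtracts the two mild formulations, uses $Z_t,Y_t\ge0$ and $u_0\le v_0$ to drop the homogeneous term, bounds $(u^p-v^p)_+\le c(\|u\|_X^{p-1}+\|v\|_X^{p-1})(u-v)_+$, and closes with a Gronwall-type inequality on $(u-v)_+$. You instead propagate the ordering through the Picard iterates $u^{n}\le v^{n}$ and pass to the limit. Both are standard and equally short; your version makes the role of the monotonicity of $s\mapsto s^p$ and the nonnegativity of the kernels very transparent, while the paper's Gronwall argument has the minor advantage that it applies directly to any two mild solutions without reference to the particular iteration scheme or ball. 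The only small point to make explicit in your version is that one should run both Picard sequences in a common ball (take $R=2\|v_0\|_X$, which dominates $2\|u_0\|_X$ since $0\le u_0\le v_0$) with the corresponding common $T_0$, so that both converge there.
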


\begin{proof} Let $\|u_0\|_X=\|u_0\|_1+\|u_0\|_\infty=M$ and, for $T_0>0$ fixed, consider the space
$$
E=\{v\in C((0,T_0);X)\,:\sup_{0<t<T_0}\|v(t)\|_X\le 4M\}.
$$
where abusing notation we omit the spatial variable $x$. In $E$ we use the standard distance $d(u,v)=\sup\limits_{0<t<T_0}\|u(t)-v(t)\|_X$.
We define the operator
\begin{equation}\label{operator-phi}
\Phi(v)(t)=Z_t*u_0+\int_0^tY_{t-\tau}*v^p(\tau)\,d\tau.
\end{equation}
We want to prove that if $T_0$ is small then $\Phi:E\to E$ is contractive, and thus has a unique fixed point. Using \eqref{eq:u0zyt-inf}, if $v\in E$ and $t<T_0$,
$$
\|\Phi(v)(t)\|_\infty\le \|u_0\|_\infty+\int_0^t\ell(t-\tau)\|v(\tau)\|_\infty^p\,d\tau\le M+cM^ph_\ell(T_0)\le2M,
$$
and also
$$
\|\Phi(v)(t)\|_1\le \|u_0\|_1+\int_0^t\ell(t-\tau)\|v(\tau)\|_\infty^{p-1}\|v(\tau)\|_1\,d\tau\le M+cM^ph_\ell(T_0)\le2M,
$$
provided $T_0$ is small enough.
Thus $\Phi(E)\subset E$. Similarly, for $v_1,v_2\in E$, and $t<T_0$,
$$
\begin{array}{rl}
\displaystyle\|\Phi(v_1)(t)-\Phi(v_2)(t)\|_X&\displaystyle\le \int_0^t\ell(t-\tau)(4M)^{p-1}\|v_1(\tau)-v_2(\tau)\|_{X}\,d\tau \\ [4mm]
&\displaystyle\le ch_\ell(T_0) M^{p-1}\sup\limits_{0<\tau<T_0}\|v_1(\tau)-v_2(\tau)\|_X\\ [3mm]
&\displaystyle\le \frac12d(v_1,v_2),
\end{array}$$
and $\Phi$ is contractive in $E$.

Let us prove the comparison, from where we deduce also uniqueness. Subtracting the definitions for two solutions $u$ and $v$ in  $Q_T$, for some $T>0$, we get
$$
u(t)-v(t)=Z_t*(u_0-v_0)+\int_0^t Y_{t-\tau}*\left(u^p(\tau)-v^p(\tau)\right)\,d\tau.
$$
Since both kernels, $Z_t$ and $Y_t$ are nonnegative, we obtain, if $u_0\le v_0$,
$$
\begin{array}{rl}
(u(t)-v(t))_+&\displaystyle=\int_0^t Y_{t-\tau}*\left(u^p(\tau)-v^p(\tau)\right)_+\,d\tau\\ [3mm]
&\displaystyle\le c\int_0^t Y_{t-\tau}*(\|u(\tau)\|^{p-1}_X+\|v(\tau)\|^{p-1}_X)\left(u(\tau)-v(\tau)\right)_+\,d\tau.
\end{array}
$$
We finish with Gronwall's inequality, $(u(t)-v(t))_+=0$ a.e.
\end{proof}

We have constructed a mild solution in $Q_{T_0}$, for some $T_0$ small, which is unique. Prolonging $u$ up to a maximal time $T$ of existence is  easy using uniqueness, i.e., if $T<\infty$ then $\lim\limits_{t\to T^-}\|u(t)\|_\infty=\infty$. Also a comparison principle between supersolutions and subsolutions is immediate to obtain from the previous arguments.

\begin{rem} $i)$ It is clear that a comparison principle also holds for subsolutions and supersolutions, if these are understood by putting inequalities in the difinition formula~\eqref{mild}.

$ii)$ With the only assumption \eqref{J0} we do not have any smoothing effect, that is, even the first term $Z_t*u_0$ is not better than $u_0$. Thus to get better regularity to the solution~\eqref{mild} we need to impose it to the initial value. On the other hand, a smoothing effect will be crucial in the construction of global solutions in Section~\ref{sec-bup2}, so there we must assume condition~\eqref{J3}.
\end{rem}

\section{Blow-up}\label{sec-bup1}

\setcounter{equation}{0}

We prove in this section the first part of Theorem \ref{teo-BUP}, the existence of blow-up for any $p>1$ and the nonexistence of global solutions for small $p$. We will use the very weak formulation~\eqref{weak}, so we first need to estimate the action of the diffusion operator over some special test functions. See \cite[Lemma 2.1]{BonforteVazquez14} for the fractional Laplacian. The first result is very general and has interest by itself. Later on we obtain a more precise result.

\begin{prop}\label{prop-BV}
Let $\varphi\in C^2(\mathbb{R}^N)\cap L^1(\mathbb{R}^N)$ be a positive, radially symmetric and nonincreasing function such that $|D^2\varphi(x)|\le c|x|^{-2}\varphi(x)$. Then $\mathcal{L}_x\varphi\in L^\infty(\mathbb{R}^N)$ and for $|x|$ large it holds
$$
|\mathcal{L}_x\varphi(x)|\le c(\varphi(x/2)+\mathcal{J}(x/2)).
$$
\end{prop}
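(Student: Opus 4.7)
The plan is to split the principal value integral defining $\mathcal{L}_x\varphi(x)$ into a near-field piece (a ball of radius $|x|/2$ centered at $x$) and a far-field piece, each of which I will estimate in terms of $\varphi(x/2)$ and $\mathcal{J}(x/2)$. Fix $r=|x|$ large, and set $A_1=\{z:|x-z|<r/2\}$ and $A_2=\mathbb{R}^N\setminus A_1$, writing $\mathcal{L}_x\varphi(x)=I_1+I_2$ correspondingly.

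For $I_1$, I use the principal value cancellation: by the radial symmetry of $\mathcal{J}$, the linear term $\nabla\varphi(x)\cdot(z-x)$ in a second-order Taylor expansion of $\varphi(z)$ around $x$ integrates to zero, so
\[
|I_1|\le c\int_{|x-z|<r/2}\sup_{\xi\in[x,z]}|D^2\varphi(\xi)|\,|z-x|^2\mathcal{J}(x-z)\,dz.
\]
For $z\in A_1$ the segment $[x,z]$ lies in $\{|\xi|\ge r/2\}$, so the hypothesis on $D^2\varphi$ together with the radial monotonicity of $\varphi$ gives $|D^2\varphi(\xi)|\le cr^{-2}\varphi(x/2)$. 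Splitting the remaining integral at $|w|=1$ and invoking the L\'evy condition $\int\min(|w|^2,1)\mathcal{J}(w)\,dw<\infty$ (for the singular part) and $\int_{|w|>1}\mathcal{J}(w)\,dw<\infty$ (for the regular part, multiplied by the factor $r^{-2}$) yields $|I_1|\le c\varphi(x/2)$.

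For $I_2$, I treat the terms $\varphi(x)\mathcal{J}(x-z)$ and $\varphi(z)\mathcal{J}(x-z)$ separately. The first gives $\varphi(x)\int_{|w|\ge r/2}\mathcal{J}(w)\,dw\le c\varphi(x/2)$ by the same tail estimate. For the second I split $A_2$ according to whether $|z|<r/2$ or $|z|\ge r/2$. In the first subcase $|x-z|\ge r/2$ and radial monotonicity of $\mathcal{J}$ at infinity give $\mathcal{J}(x-z)\le\mathcal{J}(x/2)$, so the contribution is at most $\mathcal{J}(x/2)\|\varphi\|_1\le c\mathcal{J}(x/2)$. In the second subcase radial monotonicity of $\varphi$ gives $\varphi(z)\le\varphi(x/2)$, and the resulting integral is bounded by $c\varphi(x/2)\int_{|w|>r/2}\mathcal{J}(w)\,dw\le c\varphi(x/2)$. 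Adding the pieces yields the pointwise bound claimed for large $|x|$.

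To conclude $\mathcal{L}_x\varphi\in L^\infty(\mathbb{R}^N)$, I would handle $|x|$ in a compact set by the same Taylor argument on $A_1$, using that $D^2\varphi$ is uniformly bounded there (since $\varphi\in C^2$), together with $\|\varphi\|_\infty<\infty$ on $A_2$ and the L\'evy integrability; this gives a uniform bound. The main obstacle I anticipate is that the L\'evy assumption controls $|w|^2\mathcal{J}(w)$ only near the origin, whereas $\mathcal{J}$ is allowed to be essentially arbitrary for $|w|$ between $1$ and $r/2$; the remedy is the careful split at $|w|=1$ combined with the factor $r^{-2}$ from the Hessian bound, which turns the otherwise uncontrolled piece into $\int_{|w|>1}\mathcal{J}(w)\,dw$, a finite quantity under \eqref{J0}. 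A secondary subtlety is maintaining the correct evaluation point $x/2$ rather than $x$ throughout, in order to get the stated bound without additional monotonicity assumptions on $\varphi$.
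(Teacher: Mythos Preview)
Your argument is correct and follows essentially the same route as the paper. The paper uses a four-region decomposition $\Omega_1=\{|x-z|<|x|/2\}$, $\Omega_2=\{|z|<|x|/2\}$, $\Omega_3=\{|z|>3|x|/2\}$, $\Omega_4=\{|x|/2<|z|<3|x|/2\}\setminus\Omega_1$, whereas you collapse the far field into a single region $A_2$ and instead split the integrand $\varphi(x)-\varphi(z)$ into its two summands; the resulting estimates (Taylor expansion with the $|w|=1$ cutoff on $A_1=\Omega_1$, monotonicity of $\mathcal{J}$ on the subset $\{|z|<r/2\}$, and the $\mathcal{J}$-tail bound elsewhere) coincide line by line with the paper's.
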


\begin{proof}
The fact that $\mathcal{L}_x\varphi$ is finite is easy from the fact that the kernel is of L\'evy type, so we concentrate in estimating its value for large $|x|$. As in \cite{BonforteVazquez14} we divide the integral defining $\mathcal{L}_x\varphi$ into four integrals
$$
\mathcal{L}_x\varphi(x)=\int_{\mathbb{R}^N}(\varphi(x)-\varphi(z))\mathcal{J}(x-z)\,dz=\sum_{i=1}^4I_i
$$
where
$$
I_i=\int_{\Omega_i}(\varphi(x)-\varphi(z))\mathcal{J}(x-z)\,dz,
$$
and
$$
\begin{array}{ll}
\Omega_1=\{|x-z|<|x|/2\},\quad
&\Omega_2=\{|z|<|x|/2\},\\ [3mm]
\Omega_3=\{|z|>3|x|/2\},\quad
&\Omega_4=\{|x|/2<|z|<3|x|/2\}-\Omega_1.
\end{array}
$$
We have
$$
\begin{array}{l}
|I_1|\displaystyle=\left|\int_{|x-z|<|x|/2}\nabla\varphi(x)(x-z)\mathcal{J}(x-z)\,dz\right.\\ [4mm]
\hspace{7mm}\displaystyle+
\left.\frac12\int_{|x-z|<|x|/2}\langle D^2\varphi(\overline x)(x-z),x-z\rangle\mathcal{J}(x-z)\,dz\right| \\ [4mm]
\hspace{7mm}\displaystyle\le0+\sup_{|\overline x|>|x|/2}|D^2\varphi(\overline x)|\int_{|w|<|x|/2}|w|^2\mathcal{J}(w)\,dw\\ [4mm]
\hspace{7mm}\displaystyle\le c|x|^{-2}\varphi(x/2)\left(\int_{|w|<1}|w|^2\mathcal{J}(w)\,dw+|x|^2\int_{1<|w|<|x|/2}\mathcal{J}(w)\,dw\right)\\ [4mm]
\hspace{7mm}\displaystyle\le c|x|^{-2}\varphi(x/2)(1+|x|^2)\le c\varphi(x/2).
\end{array}
$$
The integral with the gradient is zero by symmetry; the last two integrals are bounded by ~\eqref{J0}.
$$
\begin{array}{l}
|I_2|\displaystyle
= \int_{|z|<|x|/2}(\varphi(z)-\varphi(x))\mathcal{J}(x-z)\,dz\le c\mathcal{J}(x/2)\int_{|z|<|x|/2}\varphi(z)\,dz\\ [4mm]
\hspace{7mm}\displaystyle
\le c\mathcal{J}(x/2).
\end{array}
$$
We have used $|\varphi(x)-\varphi(z)|=\varphi(z)-\varphi(x)\le \varphi(z)$, and $|x-z|\ge|x|/2$.
$$
|I_3|
= \int_{|z|>3|x|/2}(\varphi(x)-\varphi(z))\mathcal{J}(x-z)\,dz\le \varphi(x)\int_{|z|>3|x|/2}\mathcal{J}(z/3)\,dz.
$$
Here $|\varphi(x)-\varphi(z)|=\varphi(x)-\varphi(z)\le \varphi(x)$, and $|x-z|\ge|z|/3$.
$$
|I_4|\le 2\varphi(x/2)\int_{|w|>|x|/2}\mathcal{J}(w)\,dw.
$$
And here $|\varphi(x)-\varphi(z)|\le\varphi(x)+\varphi(z)\le 2\varphi(x/2)$. We obtain the desired estimate.
\end{proof}

We now take a more precise function $\phi$ and focus on kernels satisfying \eqref{J1}.
\begin{prop}\label{prop2-BV}
Let  $\mathcal{J}$ satisfy \eqref{J1} with $\gamma<2$, and let $\phi$ be defined by $\phi(x)=\psi(x/A)$ for some $A>2$, where $\psi(z)=1$ for $|z|\le1$, $\psi(z)=|z|^{-N-\gamma}$ for $|z|\ge2$, is such that $\phi$ satisfies the hypothesis of Proposition~\ref{prop-BV}. Then it holds
\begin{equation}\label{phi-A}
|\mathcal{L}_x\phi(x)|\le cA^{-\gamma}\phi(x)
\end{equation}
for every $x\in\mathbb{R}^N$.
\end{prop}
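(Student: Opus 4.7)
The plan is to transfer all the $A$-dependence into a single explicit prefactor by rescaling. Setting $\xi=x/A$ and performing the substitution $y=Aw$ in the integral defining $\mathcal{L}_x\phi(x)$, one obtains
\[
\mathcal{L}_x\phi(x)=A^{N}\,\text{P.V.}\!\int_{\RR^N}\bigl(\psi(\xi)-\psi(\xi-w)\bigr)\mathcal{J}(Aw)\,dw,
\]
so the task reduces to showing $\bigl|\text{P.V.}\!\int(\psi(\xi)-\psi(\xi-w))\mathcal{J}(Aw)\,dw\bigr|\le c\,A^{-N-\gamma}\psi(\xi)$. I would split this integral at the natural scale $|w|=1/A$, corresponding to whether the argument $Aw$ of $\mathcal{J}$ lies inside the unit ball—where only the L\'evy condition in \eqref{J0} is available—or outside, where the tail bound \eqref{J1} applies with exponent $\gamma<2$.

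On the inner region $|w|<1/A$ the radial symmetry of $\mathcal{J}$ annihilates the linear term of the Taylor expansion of $\psi$, and the hypothesis $|D^2\psi(\eta)|\le c|\eta|^{-2}\psi(\eta)$ together with the monotonicity and doubling properties of $\psi$ yields a bound of the form $c\min\{1,|\xi|^{-2}\}\psi(\xi)$ for the pointwise second-order factor. Undoing the substitution leaves the finite integral $\int_{|y|<1}|y|^2\mathcal{J}(y)\,dy$, so this inner piece is bounded by $c\,A^{-N-2}\psi(\xi)$, and after the prefactor $A^N$ its contribution to $\mathcal{L}_x\phi(x)$ is $\le cA^{-2}\psi(\xi)\le cA^{-\gamma}\psi(\xi)$, using $\gamma<2$ and $A>2$. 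On the outer region $|w|>1/A$ hypothesis \eqref{J1} gives $\mathcal{J}(Aw)\le c\,A^{-N-\gamma}|w|^{-N-\gamma}$, reducing matters to the model fractional-Laplacian kernel $\widetilde{\mathcal{J}}(w)=|w|^{-N-\gamma}$ of order $\gamma<2$. I would compare the integral on $\{|w|>1/A\}$ against the full principal-value $\widetilde{\mathcal{L}}\psi(\xi)$, correcting by a second Taylor estimate on $\{|w|<1/A\}$ with the kernel $|w|^{-N-\gamma}$, whose contribution is $\le c\,\psi(\xi)\,A^{\gamma-2}\le c\,\psi(\xi)$. Since Proposition~\ref{prop-BV} applied to $\psi$ with the model kernel gives $|\widetilde{\mathcal{L}}\psi(\xi)|\le c\,\psi(\xi)$ (because $\psi$ itself decays like $|\cdot|^{-N-\gamma}$ at infinity, so the terms $\psi(\xi/2)$ and $|\xi/2|^{-N-\gamma}$ appearing there are both controlled by $\psi(\xi)$), the outer piece contributes $\le c A^{N}\cdot A^{-N-\gamma}\psi(\xi)=c A^{-\gamma}\psi(\xi)$. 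Adding both contributions yields the claimed bound $|\mathcal{L}_x\phi(x)|\le cA^{-\gamma}\phi(x)$.

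The main obstacle is ensuring that the inequality $|\widetilde{\mathcal{L}}\psi(\xi)|\le c\,\psi(\xi)$ holds \emph{uniformly} in $\xi$: Proposition~\ref{prop-BV} delivers it only for $|\xi|$ large, so for bounded $|\xi|$ one must separately invoke the $L^\infty$-bound on $\widetilde{\mathcal{L}}\psi$ together with the positive lower bound of $\psi$ on compact sets. A secondary technical point is verifying the doubling-type relation $\psi(\xi/2)\le c\,\psi(\xi)$ used to absorb displaced arguments of $\psi$, which here is immediate from the explicit construction of $\psi$ (identically $1$ on $|z|\le 1$, pure power on $|z|\ge 2$, smoothly interpolated).
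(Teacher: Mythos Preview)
Your rescaling idea is elegant and the inner piece $\{|w|<1/A\}$ is handled correctly, but the outer piece contains a genuine gap. The only link between $\mathcal{J}(Aw)$ and the model kernel is the \emph{one-sided} inequality $\mathcal{J}(Aw)\le cA^{-N-\gamma}|w|^{-N-\gamma}$. Using it forces you to pass to
\[
\int_{|w|>1/A}\bigl|\psi(\xi)-\psi(\xi-w)\bigr|\,|w|^{-N-\gamma}\,dw,
\]
an \emph{absolute-value} integral. Your subsequent comparison with the signed principal value $\widetilde{\mathcal L}\psi(\xi)$ is then illegitimate: the cancellation that makes $\widetilde{\mathcal L}\psi$ bounded --- the vanishing of the gradient term on the symmetric region $\Omega_1=\{|w|<|\xi|/2\}$ --- is a signed phenomenon and does not survive absolute values. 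Concretely, on $\Omega_1\cap\{|w|>1/A\}$ the first-order Taylor contribution yields a term of size $|\nabla\psi(\xi)|\int_{1/A}^{|\xi|/2}r^{-\gamma}\,dr$, which for $\gamma\in[1,2)$ behaves like $A^{\gamma-1}$ (or $\log A$) as $A\to\infty$. After your prefactor $A^{-\gamma}$ this leaves a bound of order $A^{-1}$, not $A^{-\gamma}$, and the argument fails for $|\xi|$ of moderate size.

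The repair is simple but changes the logic: on the region $\Omega_1\cap\{|w|>1/A\}$ you must exploit the radial symmetry of the \emph{actual} rescaled kernel $\mathcal{J}(A\cdot)$ first, to annihilate the gradient term over the symmetric annulus, and only \emph{afterwards} apply the pointwise bound $\mathcal{J}(Aw)\le cA^{-N-\gamma}|w|^{-N-\gamma}$ to the nonnegative second-order remainder $|w|^2$; this gives the correct factor $|\xi|^{-\gamma}A^{-N-\gamma}$. The regions $\Omega_2,\Omega_3,\Omega_4$ cause no trouble since the estimates there are already in absolute value. This is precisely what the paper's proof does (without rescaling): it splits at $|x|=A/2$, then for $|x|>A/2$ runs the four-region decomposition of Proposition~\ref{prop-BV} directly with $\mathcal{J}$, using its symmetry in $\Omega_1$ before invoking \eqref{J1}. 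Once you make this correction your rescaled argument and the paper's direct argument become essentially the same computation in different coordinates.
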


\begin{proof}
First, if $|x|<A/2$ we have,
$$
\begin{array}{rl}
0<\mathcal{L}_x\phi(x)&\displaystyle=\int_{|z|>A}(1-\phi(z))\mathcal{J}(x-z)\,dz \\ [3mm]
&\displaystyle\le
c\int_{|z|>A}|z|^{-N-\gamma}\,dz=cA^{-\gamma}=cA^{-\gamma}\phi(x),
\end{array}
$$
since $|x-z|>|z|-|x|>A/2>1$. Now let $|x|>A/2$, and estimate the integrals in the notation of Proposition~\ref{prop-BV}.
$$
\begin{array}{rl}
|I_1|&\displaystyle\le \sup_{|\overline x|>|x|/2}|D^2\varphi(\overline x)|\int_{|w|<|x|/2}|w|^2\mathcal{J}(w)\,dw \\ [3mm]
&\displaystyle\le cA^{-2}(|x|/A)^{-N-\gamma-2}\left(\int_{|w|<1}|w|^2\mathcal{J}(w)\,dw+\int_{1<|w|<|x|/2}|w|^2\mathcal{J}(w)\,dw\right)\\ [4mm]
&\displaystyle\le cA^{-2}(|x|/A)^{-N-\gamma-2}(1+|x|^{2-\gamma})\le cA^{-\gamma}(|x|/A)^{-\gamma}(|x|/A)^{-N-\gamma}\\ [4mm]
&\displaystyle\le cA^{-\gamma}\phi(x).
\end{array}
$$
On the other hand,
$$
\begin{array}{rl}
|I_2|&\displaystyle\le  c\mathcal{J}(x/2)\int_{|z|<|x|/2}\phi(z)\,dz\le cA^N|x|^{-N-\gamma}\le cA^{-\gamma}\phi(x),\\ [3mm]
|I_3|&\displaystyle\le \phi(x)\int_{|z|>3|x|/2}\mathcal{J}(z/3)\,dz\le c(|x|/A)^{-N-\gamma}|x|^{-N-\gamma}\le cA^{-\gamma}\phi(x),\\ [3mm]
|I_4|&\displaystyle\le 2\phi(x/2)\int_{|w|>|x|/2}\mathcal{J}(w)\,dw\le cA^{-\gamma}\phi(x).
\end{array}
$$

\end{proof}

\begin{teo}\label{teo-fujita2}
Assume hypothesis \eqref{J1}. If $p>1$ then the solution to problem \eqref{eq.principal} blows up in finite time if the datum $u_0$ is large enough
\end{teo}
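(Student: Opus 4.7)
The approach is a Kaplan-type eigenfunction argument adapted to the double nonlocal setting, using the approximate eigenfunction $\phi_A$ constructed in Proposition~\ref{prop2-BV}. Fix $A>2$ and set $\phi:=\phi_A$, so $\phi\in L^1(\mathbb{R}^N)\cap L^\infty(\mathbb{R}^N)$ is positive and satisfies $|\mathcal{L}_x\phi|\le b\phi$ pointwise, with $b:=cA^{-\gamma}$. Define the Kaplan functional
$$
F(t):=\int_{\mathbb{R}^N}u(x,t)\,\phi(x)\,dx,\qquad t\in[0,T),
$$
which is continuous in $t$ since $u\in C([0,T);X)$ and $\phi\in X$. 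Testing the very weak formulation~\eqref{weak} against $\zeta(x,t)=\phi(x)w(t)$ for nonnegative $w$ vanishing at $t=T$, estimating the linear term by Proposition~\ref{prop2-BV} as $\bigl|\int u\,\mathcal{L}_x\phi\,dx\bigr|\le bF(t)$, and applying Jensen's inequality to the source with the probability measure $\phi\,dx/\|\phi\|_1$ to get $\int u^p\phi\,dx\ge\|\phi\|_1^{1-p}F(t)^p=:aF(t)^p$, I obtain the differential inequality $\mathcal{D}_tF\ge aF^p-bF$. Convolving with the conjugate kernel $\ell$ and using $\ell\star\kappa=1$ (cf.~\eqref{rho1-2}) yields the equivalent Volterra integral inequality
$$
F(t)\;\ge\;F(0)+\int_0^t\ell(t-s)\bigl[aF(s)^p-bF(s)\bigr]\,ds.
$$

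Now choose $u_0$ so large that $F(0)^{p-1}\ge 4b/a$. I claim $F\ge F(0)$ on the whole existence interval. Indeed, if $\tau:=\inf\{t>0:F(t)<F(0)\}$ were finite, continuity would give $F(\tau)=F(0)$ and $F(s)\ge F(0)$ on $[0,\tau]$, so that $aF(s)^{p-1}-b\ge 3b$ and hence $aF(s)^p-bF(s)\ge 3bF(0)>0$ on that interval; the Volterra inequality then forces $F(\tau)\ge F(0)+3bF(0)h_\ell(\tau)>F(0)$, a contradiction. With $F\ge F(0)$ established, the inequality strengthens to
$$
F(t)\;\ge\;F(0)+\tfrac{a}{2}\int_0^t\ell(t-s)F(s)^p\,ds,
$$
to which one applies the standard fractional-ODE blow-up argument: consider the scalar Volterra equation $\tilde F(t)=F(0)+(a/2)(\ell\star\tilde F^p)(t)$; its solution is nondecreasing, and by an iterative doubling scheme setting $F_k=2^kF(0)$ and $\tau_k=h_\ell^{-1}\bigl(2/(aF_k^{p-1})\bigr)$ one checks $\tilde F(\sum_{j<k}\tau_j)\ge F_k$, and the total time $T_*=\sum_k\tau_k$ is finite because $h_\ell^{-1}$ vanishes at a polynomial rate at the origin (explicitly, $h_\ell^{-1}(y)\sim y^{1/\alpha}$ under~\eqref{K1}, and more generally at a summable rate under the standing hypothesis~\eqref{K0} via Proposition~2.1). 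A scalar Volterra comparison using the strict positivity of $\ell$ gives $F\ge\tilde F$, hence $F\to\infty$ at some finite $T\le T_*$, and $u$ cannot be extended globally.

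The main obstacle is the propagation of the lower bound $F\ge F(0)$: because $\mathcal{D}_t$ is nonlocal in time, one cannot argue via the sign of $\partial_tF$ at a first crossing, and must instead work entirely with the Volterra integral form and exploit $\ell>0$. The requirement ``$u_0$ large enough'' in the statement is made precise by the threshold $F(0)\ge (4cA^{-\gamma})^{1/(p-1)}\|\phi_A\|_1$, which one can always satisfy by scaling up the admissible datum and which fixes the trade-off between enlarging $A$ (to shrink $b=cA^{-\gamma}$) and the concurrent growth of $\|\phi_A\|_1$.
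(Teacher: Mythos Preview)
Your argument is essentially correct, but it follows a genuinely different route from the paper's proof. The paper does \emph{not} derive a differential or Volterra inequality for the Kaplan functional $F(t)=\int u\phi$; instead it uses a Mitidieri--Pokhozhaev capacity-type argument. Assuming the solution lives on $[0,1]$, the paper tests the very weak formulation on $Q_1$ with the \emph{separated} function $\zeta(x,t)=\phi(x)\,h_\ell^{p/(p-1)}(1-t)$, uses Proposition~\ref{prop2-BV} to bound the spatial part, shows ${}_t\mathcal D_1\varphi_2\le c\varphi_2^{1/p}$ for the time part, and then a single application of H\"older plus the trivial maximization $cJ^{1/p}-J\le c'$ yields the a priori bound $\int_{|x|<3}u_0\le c$. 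Large data violate this bound, hence blow-up before $t=1$.

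Your Kaplan route is longer but perfectly viable, and it delivers an explicit blow-up time estimate via the doubling series $\sum_k\tau_k$. Two remarks. First, the passage ``testing the weak formulation $\Rightarrow\mathcal D_tF\ge aF^p-bF$ $\Rightarrow$ Volterra inequality'' should be done in one step: from the weak identity you get (distributionally) $\partial_t\bigl(\kappa\star(F-F(0))\bigr)=H-G$ with $H\ge aF^p$, $|G|\le bF$, and convolving with $\ell$ (using $\ell\star\kappa=1$ from~\eqref{K0}, not~\eqref{rho1-2}) gives the integral inequality directly; there is no need for $\mathcal D_tF$ to exist pointwise. Second, your claim that $\sum_k\tau_k<\infty$ under~\eqref{K0} alone is correct, but the reason is the \emph{monotonicity of $\ell$} in~\eqref{K0}: this makes $h_\ell$ concave, hence $h_\ell(t)\ge h_\ell(1)\,t$ for $t\le1$, so $h_\ell^{-1}(y)\le y/h_\ell(1)$ for small $y$ and the geometric series converges. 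Proposition~2.1 by itself does not give this; it gives $h_\ell(t)\ge t/h_\kappa(t)$, which is not immediately useful for inverting near $0$.

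In short: the paper's proof is a two-line test-function trick with no ODE analysis, while yours is a bona fide Kaplan argument requiring Volterra comparison and a doubling iteration. Both work; the paper's is considerably shorter, yours is more quantitative.
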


\begin{proof}
The proof follows the standard argument of \cite{Kaplan63} of using as a test in the very weak formulation a function which plays the role of an eigenfunction of the operator. We use also ideas of \cite{ZhangSun15} for the case of a Caputo time derivative and local diffusion.

We first observe that if $\mathcal{J}$ satisfies condition \eqref{J1} with some $\gamma\ge2$ then it also satisfies the same condition with $\gamma=2-\epsilon$ for any $0<\epsilon<2$; we thus assume without loss of generality $\gamma<2$.

Suppose the solution $u$ is defined at least up to $t=1$. Define the  test function
$$
\zeta(x,t)=\varphi_1(x)\varphi_2(t),
$$
where $\varphi_1=\phi$, the function in Proposition \ref{prop2-BV} with $A=3$, and
$$
\varphi_2(t)=h_\ell^\mu(1-t), \qquad\mu=\frac p{p-1}.
$$

Let us estimate the action of the operator $\mathcal{H}$ on $\zeta$, i.e., we estimate $\mathcal{L}_x\varphi_1$ and ${}_t\mathcal{D}_T\varphi_2$.

By \eqref{phi-A} we have $|\mathcal{L}_x\varphi_1(x)|\le c\varphi_1(x)$.
On the other hand, using \eqref{eq:DtT}, since $h_\ell$ is nondecreasing, we obtain
$$
\begin{array}{rl}
{}_t\mathcal{D}_1\varphi_2(t)&= \Big((h_\ell^{\mu})'\star\kappa\Big)(1-t)=\mu \Big((h_\ell^{\mu-1}\ell)\star\kappa\Big)(1-t)\\ [2mm]
&\le\mu h_\ell^{\mu-1}(1-t)(\ell\star\kappa)(1-t)\\ [2mm]
&=\mu h_\ell^{\mu-1}(1-t)
=c\varphi_2^{1/p}(t).
\end{array}
$$
Also,
$$
\begin{array}{rl}
\displaystyle\int_0^1{}_t\mathcal{D}_1\varphi_2(t)\,dt&\displaystyle=\int_0^1\int_0^{1-t}(h_\ell^\mu)'(\tau)\kappa(1-t-\tau)\,d\tau dt \\ [3mm]
&\displaystyle=\int_0^1\kappa(z)\int_0^{1-z}(h_\ell^\mu)'(1-t-z)\,dtdz \\ [3mm]
&\displaystyle=\int_0^1\kappa(z)h_\ell^\mu(1-z)\,dz\ge c>0.
\end{array}
$$
Thus the identity
$$
\int_{Q_1}u_0({}_t\mathcal {D}_1\zeta)=\int_{Q_1}u({}_t\mathcal {D}_1+\mathcal{L}_x)\zeta-
\int_{Q_1}u^p\zeta$$
i.e.,
$$
\int_{Q_1}u_0\varphi_1({}_t\mathcal {D}_1\varphi_2)=\int_{Q_1}(u\varphi_1({}_t\mathcal {D}_1\varphi_2)+u\varphi_2\mathcal{L}_x\varphi_1)-
\int_{Q_1}u^p\zeta$$
becomes
$$
\begin{array}{rl}
\displaystyle c\int_{\mathbb{R}^N}u_0\varphi_1&\displaystyle \le  c\int_{Q_1}u\varphi_1\varphi_2^{1/p}+c\int_{Q_1}u\varphi_1\varphi_2-
\int_{Q_1}u^p\varphi_1\varphi_2 \\ [3mm]
&\displaystyle \le  c\int_{Q_1}u\varphi_1\varphi_2^{1/p}-
\int_{Q_1}u^p\varphi_1\varphi_2.
\end{array}
$$
H\"older's inequality on the first integral of the right-hand side gives
$$
\int_{Q_1}u\varphi_1\varphi_2^{1/p}
\le \left(\int_{Q_1}u^p\varphi_1\varphi_2\right)^{\frac1p}
\left(\int_{Q_1}\varphi_1\right)^{\frac{p-1}p}=c\left(\int_{Q_1}u^p\varphi_1\varphi_2\right)^{\frac1p}.
$$
Thus we get
$$
\int_{|x|<3}u_0\le\int_{\mathbb{R}^N}u_0\varphi_1\le c\left(\int_{Q_1}u^p\zeta\right)^{1/p}-
\int_{Q_1}u^p\zeta\le c.
$$
This is a contradiction if $\int_{|x|<3}u_0$ is large, and then $u$ must blow up before $t=1$.
\end{proof}

\begin{rem}
  Without imposing condition \eqref{J1} to the kernel $\mathcal{J}$, that is, considering L\'evy kernels in the limit of integrability at infinity, for instance
  $$
  \mathcal{J}(z)=|z|^{-N}(\log|z|)^{-\varepsilon},\qquad|z|>2,\quad\varepsilon>1,
  $$
  we could use the argument of Proposition \eqref{prop-BV} if we assume additional regularity properties in order to use the kernel itself as test function $\varphi_1$.
\end{rem}

We now characterize the Fujita range, i.e., the interval of powers $p$ such that all solutions blow up.

\begin{teo}\label{teo-fujita2}
Assume hypotheses \eqref{K1} and \eqref{J1}. If $1<p<1+\overline\gamma/N$ then for any given data $u_0$ the solution to problem \eqref{eq.principal} blows up in finite time.
\end{teo}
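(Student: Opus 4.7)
The plan is to argue by contradiction using the very weak formulation \eqref{weak} with the same product test function as in the previous (large-data) theorem, but now with both scales variable and coupled. Assume for contradiction that the mild solution $u$ with nontrivial datum $u_0$ is global, so \eqref{weak} is valid for every $T>0$. As already noted, if \eqref{J1} holds with $\gamma\geq 2$ it holds also with any smaller $\gamma<2$, so I may assume $\overline\gamma=\gamma<2$ from the outset. Fix $\mu=p/(p-1)$, let $A=A(T)$ be a spatial scale to be chosen later, and take $\zeta(x,t)=\phi_A(x)\,\varphi_2(t)$ with $\phi_A(x)=\psi(x/A)$ (the function of Proposition \ref{prop2-BV}) and $\varphi_2(t)=h_\ell^\mu(T-t)$.

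The two pointwise estimates that drive the argument are $|\mathcal{L}_x\phi_A|\leq cA^{-\overline\gamma}\phi_A$ from Proposition \ref{prop2-BV}, and $0\leq{}_t\mathcal{D}_T\varphi_2\leq c\,\varphi_2^{1/p}$, obtained as in the previous proof from $\ell\star\kappa=1$ together with the monotonicity of $h_\ell$. Inserting $\zeta$ into \eqref{weak}, moving $\int u^p\zeta$ to the left-hand side, and applying Hölder's inequality to each of the two integrals on the right in order to extract a factor $(\int_{Q_T}u^p\zeta)^{1/p}$, one reaches, after evaluating the auxiliary $L^1$ integrals $\int\phi_A\sim A^N$ and $\int_0^T\varphi_2\,dt\sim T^{\alpha\mu+1}$ via \eqref{K1},
\[
\int_{Q_T}u^p\zeta+\int_{Q_T}u_0\,{}_t\mathcal{D}_T\zeta \leq c\Bigl(\int_{Q_T}u^p\zeta\Bigr)^{1/p} A^{N(p-1)/p}T^{(p-1)/p}\bigl(1+A^{-\overline\gamma}T^\alpha\bigr).
\]

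Next, I balance the bracket by choosing $A=T^{\alpha/\overline\gamma}$, use Young's inequality to absorb the factor of $(\int u^p\zeta)^{1/p}$ into the left-hand side, and compute $\int_0^T{}_t\mathcal{D}_T\varphi_2\,dt=\int_0^T\kappa(z)h_\ell^\mu(T-z)\,dz\sim T^{\alpha/(p-1)+1}$ under \eqref{K1}, to arrive at
\[
\int_{\mathbb{R}^N}u_0(x)\,\phi_A(x)\,dx \leq c\,T^{\alpha\bigl(N/\overline\gamma-1/(p-1)\bigr)}.
\]
The subcritical hypothesis $p<1+\overline\gamma/N$ is exactly the assertion $1/(p-1)>N/\overline\gamma$, so the exponent of $T$ is strictly negative. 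Letting $T\to\infty$, and therefore $A=T^{\alpha/\overline\gamma}\to\infty$ so that $\phi_A\to 1$ pointwise and $\int u_0\phi_A\to\int u_0>0$ by monotone convergence, yields the contradiction $0<\int u_0\leq 0$.

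The main obstacle is the spatial scaling. Since $\mathcal{L}_x$ is not self-similar under the sole assumption \eqref{J1}, the dilation-type bound $|\mathcal{L}_x\phi_A|\leq cA^{-\overline\gamma}\phi_A$ is not a scaling identity but the substance of the four-region splitting in Proposition \ref{prop2-BV}, whose closure relies on the reduction $\gamma<2$ so that the near-field second-order estimate is integrable. Once this spatial bound and the polynomial control of $h_\ell$ provided by \eqref{K1} are in hand, everything else is routine Hölder-Young bookkeeping, and the critical coupling $A^{\overline\gamma}=T^\alpha$ produces the Fujita exponent $1+\overline\gamma/N$ automatically.
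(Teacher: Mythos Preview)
Your argument is correct and follows essentially the same route as the paper. The only cosmetic difference is the time test function: the paper switches to the explicit power $\varphi_2(t)=(1-t/T)_+^{p\alpha/(p-1)}$, which under \eqref{K1} is comparable to your choice $h_\ell^{p/(p-1)}(T-t)$ up to a factor $T^{p\alpha/(p-1)}$; since the paper's $\varphi_2$ satisfies $\varphi_2\le1$ it can absorb the diffusion term $u\varphi_2\mathcal{L}_x\varphi_1$ directly into the time-derivative term (using $\varphi_2\le\varphi_2^{1/p}$) and apply H\"older only once, whereas you need a second H\"older and the balance $A^{\overline\gamma}=T^\alpha$---but both computations land on the same inequality $\int u_0\phi_A\le cT^{\alpha(N/\overline\gamma-1/(p-1))}$ and the same contradiction.
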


\begin{proof}
We repeat the previous argument, using this time a rescaled test function, following the original idea of \cite{Fujita66}, and this is why we require condition \eqref{K1}. Suppose $u$ is a global solution, so it satisfies identity~\eqref{weak} for any $T>0$.

Consider first the case $\gamma<2$. For $T>0$ fixed define now the  test function
$$
\zeta(x,t)=\varphi_1(x)\varphi_2(t),
$$
where $\varphi_1=\phi$, the function in Proposition \ref{prop2-BV} with $A=T^{\alpha/\gamma}$, and
$$
\varphi_2(t)=(1-t/T)_+^\mu, \qquad\mu=\frac{p\alpha}{p-1}.
$$
These functions satisfy $0<\varphi_1(x)\le1$, $\varphi_1(x)=1$ for $|x|\le T^{\alpha/\gamma}$; $0\le\varphi_2(t)\le1$, $\varphi_2(T)=0$. We estimate as before $\mathcal{H}\zeta$ for $T$ large.

From Proposition \ref{prop2-BV} we have
$$
\mathcal{L}_x\varphi_1(x)\le cT^{-\alpha}\varphi_1(x).
$$
On the other hand, using \eqref{K1},
$$
\begin{array}{l}
{}_t\mathcal{D}_T\varphi_2(t)\le c T^{-\alpha}(1-t/T)^{\mu-\alpha}_+= c T^{-\alpha}\varphi_2^{1/p}(t), \\ [3mm]
\displaystyle\int_0^T{}_t\mathcal{D}_T\varphi_2(t)\ge c\int_0^TT^{-\alpha}(1-t/T)^{\mu-\alpha}_+=cT^{1-\alpha}.
\end{array}
$$
Thus the very weak formulation becomes here
$$
cT^{1-\alpha}\int_{\mathbb{R}^N}u_0\varphi_1\le  cT^{-\alpha}\int_{Q_T}u\varphi_1\varphi_2^{1/p}-
\int_{Q_T}u^p\varphi_1\varphi_2.
$$
H\"older's inequality on the first integral of the right-hand side gives
$$
\begin{array}{rl}
\displaystyle\int_{Q_T}u\varphi_1\varphi_2^{1/p}&\displaystyle\le \left(\int_{Q_T}u^p\varphi_1\varphi_2\right)^{\frac1p}
\left(\int_{Q_T}\varphi_1\right)^{\frac{p-1}p}\\ [3mm]
&\displaystyle\le cT^{(1+\frac{\alpha N}\gamma)\frac{p-1}p}\left(\int_{Q_T}u^p\varphi_1\varphi_2\right)^{\frac1p}.
\end{array}$$
Thus we get
$$
cT^{1-\alpha}\int_{\mathbb{R}^N}u_0\varphi_1\le cT^{-\alpha+(1+\frac{\alpha N}\gamma)\frac{p-1}p}\left(\int_{Q_T}u^p\zeta\right)^{1/p}-
\int_{Q_T}u^p\zeta.
$$
Maximizing the right-hand side we obtain
\begin{equation}\label{estim-fujita}
\int_{\mathbb{R}^N}u_0\varphi_1\le cT^{\alpha-1}\left(T^{-\alpha+(1+\frac{\alpha N}\gamma)\frac{p-1}p}\right)^{\frac p{p-1}}=cT^{\alpha(\frac N\gamma-\frac1{p-1})}.
\end{equation}
Since the exponent is negative precisely for $p<1+\gamma/N$, taking the limit $T\to\infty$ if $u$ is global in time we conclude
$$
\int_{\mathbb{R}^N}u_0=0,
$$
which is a contradiction. If $\gamma=2$ we replace $\gamma$ by $\gamma-\epsilon$ and we get the same result.

In the case $\gamma>2$ the previous argument works using the spatial test function $\varphi_1(x)=e^{-T^{-\alpha}|x|^2}$: it is easy to check that $\mathcal{L}_x\varphi_1(x)\le cT^{-\alpha}\varphi_1(x)$. Actually
$$
\begin{array}{rl}
\mathcal{L}_x (e^{-\lambda|x|^2})&\displaystyle=\int_{\mathbb{R}^N}\left(e^{-\lambda|x|^2}-e^{-\lambda|x-y|^2}\right)\mathcal{J}(y)\,dy \\ [3mm]
&\displaystyle=e^{-\lambda|x|^2}\int_{\mathbb{R}^N}\left(1-e^{-\lambda(|y|^2-2xy)}\right)\mathcal{J}(y)\,dy\\ [3mm]
&\displaystyle\le e^{-\lambda|x|^2}\int_{\mathbb{R}^N}\lambda(|y|^2-2xy)\mathcal{J}(y)\,dy\\ [3mm]
&\displaystyle
\le c\lambda e^{-\lambda|x|^2},
\end{array}
$$
since the first integral is bounded by hypothesis and the second one vanishes by symmetry (use Principal Value if $\mathcal{J}$ is singular at the origin). We then get inequality \eqref{estim-fujita} with $\gamma$ replaced by $2$. The proof is done.
\end{proof}

As a corollary of the proof we obtain an estimate of the blow-up time in terms of an integral condition on the initial datum $u_0$. See \cite{CortazarQuirosWolanski24} for the double fractional equation~\eqref{double-eq}.

\begin{teo}\label{teo-kaplan}
There is a constant $c>0$ such that if
\begin{equation}\label{cqw}
\int_{|x|<T_0^{\alpha/\overline\gamma}}u_0>cT_0^{\alpha(\frac N{\overline\gamma}-\frac1{p-1})}
\end{equation}
then the solution blows up before $t=T_0$.
\end{teo}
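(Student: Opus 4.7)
My approach is to extract the estimate obtained in the proof of Theorem~\ref{teo-fujita2} \emph{before} the limit $T\to\infty$ is taken, and to identify the free parameter $T$ there with the prescribed $T_0$. I would argue by contradiction: suppose the mild solution $u$ still exists at time $T_0$, so that the very weak formulation~\eqref{weak} is valid on $Q_{T_0}$. I would then use the same test function $\zeta(x,t)=\varphi_1(x)\varphi_2(t)$ as in Theorem~\ref{teo-fujita2}, namely $\varphi_1(x)=\psi(x/T_0^{\alpha/\gamma})$ in the case $\gamma<2$ (or the Gaussian variant $\varphi_1(x)=e^{-T_0^{-\alpha}|x|^2}$ when $\gamma\ge 2$), together with $\varphi_2(t)=(1-t/T_0)_+^\mu$, $\mu=p\alpha/(p-1)$.

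Repeating verbatim the estimates on $\mathcal{L}_x\varphi_1$ (Proposition~\ref{prop2-BV} with $A=T_0^{\alpha/\gamma}$ in the polynomial case, or the Gaussian calculation inside the proof of Theorem~\ref{teo-fujita2} when $\overline\gamma=2$) and on ${}_t\mathcal{D}_{T_0}\varphi_2$ (from hypothesis~\eqref{K1}), followed by H\"older's inequality and the same absorption of the $u^p$-term, yields precisely inequality~\eqref{estim-fujita} with $T$ replaced by $T_0$:
\[
\int_{\mathbb{R}^N} u_0\,\varphi_1 \;\le\; c\, T_0^{\alpha\left(\frac{N}{\overline\gamma}-\frac{1}{p-1}\right)}.
\]
Since $\varphi_1\equiv 1$ on $\{|x|<T_0^{\alpha/\gamma}\}$ in the polynomial case, and $\varphi_1\ge e^{-1}$ on $\{|x|<T_0^{\alpha/2}\}$ in the Gaussian case, bounding the left-hand side from below on the corresponding ball gives
\[
\int_{|x|<T_0^{\alpha/\overline\gamma}} u_0 \;\le\; c\, T_0^{\alpha\left(\frac{N}{\overline\gamma}-\frac{1}{p-1}\right)},
\]
with a constant $c$ depending only on $N$, $p$, $\alpha$, $\overline\gamma$ and the structural constants of $\mathcal{J}$ and $\kappa$. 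Choosing the constant in~\eqref{cqw} strictly larger than this $c$ makes the hypothesis incompatible with the displayed inequality, so $u$ cannot reach $t=T_0$ and must blow up before.

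The only point deserving a bit of care is the verification that the constant in the final bound is genuinely independent of $T_0$. This is because every intermediate estimate scales cleanly under the natural Fujita scaling $x\sim T_0^{\alpha/\overline\gamma}$, $t\sim T_0$: the bounds for $\mathcal{L}_x\varphi_1$, for ${}_t\mathcal{D}_{T_0}\varphi_2$, and for $\int_0^{T_0}{}_t\mathcal{D}_{T_0}\varphi_2$ all produce only the $T_0$-powers that are exactly cancelled by the maximization step that led to~\eqref{estim-fujita}. Consequently the statement is a purely quantitative rereading of the proof of Theorem~\ref{teo-fujita2}, and I anticipate no substantive obstacle beyond this routine bookkeeping of constants.
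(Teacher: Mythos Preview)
Your proposal is correct and follows exactly the same route as the paper: assume the solution survives to $T_0$, reuse the test functions from the proof of Theorem~\ref{teo-fujita2} with $T=T_0$ to reach inequality~\eqref{estim-fujita}, and bound $\int_{\mathbb{R}^N}u_0\varphi_1$ below by $c\int_{|x|<T_0^{\alpha/\overline\gamma}}u_0$ to obtain the contradiction. The paper's own proof is just the two-line version of what you wrote.
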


\begin{proof}
Just observe that, with $\varphi_1$ defined in the previous proof,
$$
\int_{\mathbb{R}^N}u_0\varphi_1\ge c\int_{|x|<T_0^{\alpha/\overline\gamma}}u_0,
$$
so the above condition results in a contradiction with inequality \eqref{estim-fujita} if $u$ is defined in $[0,T_0]$.
\end{proof}

\section{Global solutions for large $p$}\label{sec-bup2}

\setcounter{equation}{0}

We now look for constructing global solutions for large $p$ and small initial value. To guess the critical exponent $p$ we take into account the two typical arguments used in the literature for homogeneous problems. This should work for equation~\eqref{double-eq}, but not directly for our non homogeneous general equation~\eqref{eq.principal}. Nevertheless it could serve as a conjecture.

The first argument is to compare  the diffusion exponent decay with the ODE blow-up exponent, see \cite{DengLevine00}. For instance in the semilinear heat equation \eqref{local-eq}, the diffusion decay rate is $\rho_d=N/2$ (the decay of the Gauss kernel), while the solutions of the corresponding ODE $\partial_tU=U^p$ are $U(t)=c(T-t)^{-\frac1{p-1}}$, so $\rho_r=1/(p-1)$. Thus $\rho_d=\rho_r$ gives $p_*=1+2/N$.
The same can be applied for the double fractional equation~\eqref{double-eq}, but only if $\alpha=1$ or $\beta>N$, since otherwise the fundamental solution is not bounded at $x=0$, see \cite{KemppainenSiljanderVergaraZacher16}. When $0<\alpha<1=N<\beta<2$ the fundamental solution decay is $\|Z_t\|_\infty\le ct^{-\frac\alpha{\beta}}$, see~\eqref{selfsimilarZ}, while the solution to $\partial_t^\alpha U=U^p$ satisfies $U(t)\sim(T-t)^{-\frac\alpha{p-1}}$; this produces $p_*=1+\beta$.

The second idea, introduced in \cite{CazenaveDicksteinWeissler08}, is to use the invariance of some $L^q$ norm under the natural scaling of the equation. If $u$ is a solution to the equation~\eqref{double-eq}, so is $u_\lambda(x,t)=\lambda^{\frac1{p-1}}u(\lambda^{\frac1\beta} x,\lambda^{\frac1\alpha} t)$ for every $\lambda>0$, and $\|u_\lambda\|_{q_0}=\|u\|_{q_0}$ only for $q_0=N(p-1)/\beta$. In the semilinear case $\alpha=1$, $\beta=2$, it was shown in \cite{CazenaveDicksteinWeissler08} that if $\|u_0\|_{q_0}$ is small then the solution is global in time, and the condition $q_0>1$ is $p>1+2/N$, the Fujita exponent. In the double fractional case~\eqref{double-eq} this argument gives  $p_*=1+\beta/N$.

Though our equation is not invariant under any rescaling, if the operator behaves like that of equation~\eqref{double-eq}, the rescaled function $u_\lambda$ is a solution of a different equation but with a differential operator satisfying the same properties. The critical exponent should then be the same.  The major problem in the general situation of equation \eqref{eq.principal} is the different behaviour of the kernel $\mathcal{J}$ at the origin and at infinity, which is reflected in the different decay rates of $Z_t$ for $t$ small or $t$ large, see~\eqref{decayZt}. We now show that only the behaviour at infinity of $\mathcal{J}$ matters, that which gives the behaviour for large $t$,  but with a minimum of singularity at the origin: above the exponent $p_*=1+\varpi/ N$ there exist small global solutions.

What we need in the construction of global solutions is an estimate of the kernels $Z_t$ and $Y_t$ so as to have a smoothing effect for the solutions to our problem if the initial datum is small in some norm. We have proved in Section~\ref{sec-fundamentalair} that such estimates hold true under the assumption that $N<2\beta$ or $\mathcal{D}_t=\partial_t^\alpha$. We have then the following result that implies the last two items in Theorem~\ref{teo-BUP}.

\begin{teo}\label{teo-fujita3}
Assume hypotheses \eqref{K1}, \eqref{J2} and \eqref{J3} with $\beta\ge\frac{N\varpi}{N+\varpi}$. Assume also $N<2\beta$ if $\mathcal{D}_t\neq \partial_t^\alpha$. If $p\ge 1+ \varpi/ N$  then the solution to problem \eqref{eq.principal} is uniformly bounded for every time $0\le t<\infty$ provided the datum $\|u_0\|_1$ is small enough.
\end{teo}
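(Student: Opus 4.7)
The plan is to construct a global mild solution by a fixed-point argument for the Duhamel operator
$$
\Phi(u)(t)=Z_t*u_0+\int_0^t Y_{t-\tau}*u^p(\tau)\,d\tau
$$
in a Banach space whose norm captures the decay supplied by the smoothing estimates of Section~\ref{sec-fundamentalair}. The relevance of the critical exponent $p_*=1+\varpi/N$ is visible from the scaling heuristic recalled at the start of Section~\ref{sec-bup2}: at $p=p_*$ the $L^1$-norm is the quantity that is invariant under the rescaling preserving the large-time asymptotics of the linearized problem, which is why smallness is imposed precisely on $\|u_0\|_1$.

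Fix an auxiliary exponent $q>1$ satisfying $q\ge p$ together with $q<N/(N-\beta)_+$; the hypothesis $\beta\ge N\varpi/(N+\varpi)$, equivalent to $1/\beta\le 1/\varpi+1/N$, is precisely what guarantees that such a $q$ exists at $p=p_*$. Introduce the Banach space
$$
E=\Bigl\{u\in C([0,\infty);X):\;\|u\|_E:=\sup_{t>0}\|u(t)\|_1+\sup_{t>0}\lambda(t)\|u(t)\|_q\le M\Bigr\},
$$
with weight $\lambda(t)=t^{\alpha N(1-1/q)/\beta}$ for $0<t\le 1$ and $\lambda(t)=t^{\alpha N(1-1/q)/\varpi}$ for $t\ge 1$, matching exactly the two regimes in the bound of Corollary~\ref{cor-Zt}. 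Then $\|Z_t*u_0\|_1\le\|u_0\|_1$ and $\lambda(t)\|Z_t*u_0\|_q\le c\|u_0\|_1$ for the linear part. For the nonlinear part I apply Corollary~\ref{cor-Yt} with input exponent $r=q/p\in[1,q]$ to estimate $\|Y_{t-\tau}*u^p(\tau)\|_q$ in terms of $\|u(\tau)\|_q^p$, and Corollary~\ref{cor-Yt} with $r=1$ combined with the interpolation $\|u(\tau)\|_p^p\le \|u(\tau)\|_1^{\theta p}\|u(\tau)\|_q^{(1-\theta)p}$ to estimate $\|Y_{t-\tau}*u^p(\tau)\|_1$. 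After splitting the resulting time integrals according to the small/large-$\tau$ and small/large $(t-\tau)$ regimes and rescaling $\tau=ts$, a direct algebraic check shows that the exponent of $t$ produced is $\alpha-\alpha N(p-1)/\varpi$, which vanishes at $p=p_*$ and is negative for $p>p_*$; this yields $\lambda(t)\|D(u)(t)\|_q\le cM^p$ and $\|D(u)(t)\|_1\le cM^p$ uniformly in $t$.

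The contraction property then follows from $|u^p-v^p|\le p(u^{p-1}+v^{p-1})|u-v|$ applied inside the same estimates, yielding $\|\Phi(u)-\Phi(v)\|_E\le cM^{p-1}\|u-v\|_E$. Choosing $\|u_0\|_1$ and $M$ sufficiently small makes $\Phi$ a contraction of a small ball of $E$ into itself, and Banach's fixed-point theorem produces a global $u\in E$; by uniqueness of mild solutions (Theorem~\ref{teo-exist}) it coincides with the solution of~\eqref{eq.principal}. A short bootstrap step, plugging the $L^1\cap L^q$ information just obtained back into the Duhamel formula and invoking again Corollaries~\ref{cor-Zt}--\ref{cor-ZtYt2}, then raises $u$ to $L^\infty_t(L^\infty_x)$, which delivers the uniform bound claimed.

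The main obstacle is reconciling the two very different smoothing regimes of the fundamental pair---the $t^{-\alpha N/\beta}$ behaviour for small $t$ induced by the singularity \eqref{J3} of $\mathcal{J}$ at the origin, and the $t^{-\alpha N/\varpi}$ behaviour for large $t$ induced by the tail \eqref{J2}---inside a single norm for which the Duhamel integral closes. The compatibility condition $\beta\ge N\varpi/(N+\varpi)$ is the Sobolev-type balance that makes the admissible range of $q$ non-empty at the critical exponent $p_*$; the auxiliary restriction $N<2\beta$, required when $\mathcal{D}_t\ne\partial_t^\alpha$, is what the Fourier-transform based Lemma~\ref{lem-Z-estim} demands to supply the $L^q$ smoothing needed to close the scheme, whereas the subordination-based estimate of Corollary~\ref{cor-ZtYt2} removes this dimensional restriction in the Caputo case and thus extends the result to all dimensions $N\ge 1$.
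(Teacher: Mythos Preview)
Your overall architecture—fixed point for the Duhamel map in a weighted $L^q$-space followed by a bootstrap to $L^\infty$—is the same as the paper's and is the right one. However, the specific weight $\lambda(t)$ you propose, with short-time profile $t^{\alpha N(1-1/q)/\beta}$, does not close the scheme in general. The problem is at $\tau\to0$ in the Duhamel integral: for $u\in E$ your norm only gives $\|u(\tau)\|_q\le M\,\tau^{-\alpha N(1-1/q)/\beta}$ near $\tau=0$, so after applying Corollary~\ref{cor-Yt} with $r=q/p$ the integrand carries the factor $\tau^{-p\alpha N(1-1/q)/\beta}$. At $p=p_*=1+\varpi/N$ and $q=p$ this exponent equals $\alpha\varpi/\beta$, and for $q>p$ it is larger; hence whenever $\alpha\varpi\ge\beta$ the $\tau$-integral diverges. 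Even when it converges, the power of $t$ left after multiplying by $\lambda(t)$ is $\alpha(1-\varpi/\beta)$, which is \emph{negative} as soon as $\varpi>\beta$, so $\lambda(t)\|\Phi(u)(t)\|_q\to\infty$ as $t\to0^+$ and $\Phi$ does not map your ball into itself. A concrete failure: $N=1$, $\varpi=2$ (finite second moment), $\beta=1$, $\alpha=1/2$; all hypotheses of the theorem hold, but your small-time estimate blows up. The same obstruction appears in your $L^1$ estimate through the interpolated factor $\|u(\tau)\|_q^{(1-\theta)p}$.

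The paper sidesteps this by taking the single weight $(1+t)^\theta$ with $\theta=\alpha N(q-1)/(\varpi q)$, i.e.\ only the large-time exponent, so that the norm keeps $\|u(\tau)\|_q$ bounded near $\tau=0$ and the Duhamel integral has no singularity there. The price is that $\|Z_t*u_0\|_q$ for small $t$ cannot be controlled by $\|u_0\|_1$ alone; the paper instead uses $\|Z_t*u_0\|_q\le\|u_0\|_q\le\|u_0\|_\infty^{(q-1)/q}\|u_0\|_1^{1/q}$, which is legitimate because $u_0\in X=L^1\cap L^\infty$ and only the $L^1$-norm needs to be small. The paper also first reduces to the critical case $p=p_*$ by observing that a bounded global solution for exponent $p$ yields a supersolution for any $p'>p$, which simplifies the exponent bookkeeping. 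Your reading of the role of $\beta\ge N\varpi/(N+\varpi)$ (non-emptiness of the admissible $q$-range) and of the $N<2\beta$ versus Caputo dichotomy is correct, and the bootstrap step you outline is essentially the paper's Step~4.
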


\begin{proof} We follow the technique of Weissler \cite{Weissler81}, as applied in  \cite{ZhangSun15} for the Caputo equation with local diffusion, $\mathcal{L}=-\Delta$.
We first observe that it is enough to prove the results for $p=1+\varpi/ N$. In fact, if $v$ is a bounded global solution for some $p$, then $\varphi=M^{\frac{p-p'}{p'-1}}v$ is a (mild) supersolution to the equation with $p'>p$, where $M=\|v\|_\infty$. Thus the result is true in the interval $[p,\infty)$.

Observe that in the hypotheses considered, the estimates in Sections~\ref{sec-Fourier} and~\ref{subordination} hold.

Fix then $p=1+\varpi/ N$ and some $q>\max\{p,\varpi/\beta\}$. Define the function space
$$
\mathcal{A}=\{v \text{ measurable in $Q_\infty$ such that } \|v\|_{\mathcal{A}}<\infty\},
$$
where
$$
\|v\|_{\mathcal{A}}=\sup_{t>0}(1+t)^\theta\|v(t)\|_q,\qquad
  \theta=\frac{\alpha N(q-1)}{\varpi q}.
  $$

Define also the map $\Phi:\mathcal{A}\to\mathcal{A}$, as in the proof of Theorem \ref{teo-exist},
$$
\Phi(v)(t)=Z_t*u_0+\int_0^tY_{t-\tau}*v^p(\tau)\,d\tau.
$$
We want to prove that if $\|u_0\|_{1}=\varepsilon$ is small, and $R>0$ is taken also small, then $\Phi$ is contractive in the ball $B_R\subset\mathcal{A}$, and thus has a unique fixed point $u$. This is a mild solution to our problem, which is bounded locally in time by the existence result, Theorem~\ref{teo-exist}, with finite $L^q$-norm for all positive times. Then we improve the exponent $q$ to get $u$ in $L^r$ for some $r>N/p\beta$ which implies finally  that the solution  is bounded for all times by Corollaries~\ref{cor-Yt} or~\ref{cor-ZtYt2}. We only have to consider the estimates in the first case because, as we will see, the exponents $\delta$ and $\delta'$ coincide when putting $r=q/p$.

We develop this procedure in several steps.

\underline{\sc Step 1}. Let $\|u_0\|_\infty=M$.
Then, if $0<t<1$,
$$
\|Z_t*u_0\|_q\le\|u_0\|_q\le M^{\frac{q-1}q}\varepsilon^{\frac{1}q}.
$$
Now, for $t>1$, using estimate \eqref{eq:u0zt},
$$
\|Z_t*u_0\|_q\le  c t^{-\delta(1,q;t)}\varepsilon,
$$
provided $q<q_1=\frac{N}{(N-\beta)_+}$. Recall that for $t>1$ it is
$$
\delta(1,q;t)=\alpha\min\{1,\frac{N(q-1)}{\varpi q}\}=\frac{\alpha N(q-1)}{\varpi q}
$$
if we also impose $q\le q_2=\frac{N}{(N-\varpi)_+}$. We get in that way $\delta(1,q;t)=\theta$.  We therefore conclude $$
\|Z_t*u_0\|_{\mathcal{A}}\le c(M^{\frac{q-1}q}\varepsilon^{\frac1q}+\varepsilon).
$$

Now we have to estimate the second term in the $L^q$ norm of $\Phi(v)$ for $v$ in the ball $B_R$. We use $v^p(t)\in L^{q/p}$ with $\|v^p(t)\|_{q/p}\le R^p(1+t)^{-p\theta}$. Then, whenever $q<\frac{Nq/p}{N-\beta q/p}$ we can use estimate~\eqref{eq:u0yt}, and this is true since $q>\varpi/\beta$. We thus get
$$
I(t)=\left\|\int_0^tY_{t-\tau}*v^p(\tau)\,d\tau\right\|_q\le
c R^p\int_0^t(t-\tau)^{-a(t-\tau)}(1+\tau)^{-p\theta}\,d\tau,
$$
where $a(s)=1-\alpha+\delta(q/p,q;s)$, see \eqref{exp:u0zt}. We write $
a(s)=\begin{cases}
a_1&s<1,\\ a_2&s\ge1,
\end{cases}$ with the explicit values
$$
a_1=1-\alpha\left(1-\frac{\varpi}{\beta q}\right),\qquad a_2= 1-\alpha\left(1-\frac{1}q\right).
$$
Notice that here $\delta(q/p,q;s)=\delta'(q/p,q;s)$. Clearly $0<a_1,a_2<1$.

Now if $0<t<1$,
$$
I(t) \le c R^p\int_0^t  (t-\tau)^{-a_1}(1+\tau)^{-p\theta}\,d\tau\le cR^pt^{1-a_1}\le cR^p.
$$

Analogously $I(t)$ is bounded if $1\le t\le 2$. Let us then consider the case $t>2$. We have
$$
I(t)\le cR^p\left(\int_0^1+\int_1^{t-1}+\int_{t-1}^t\right)=cR^p\left(I_1+I_2+I_3\right).
$$
$$
\begin{array}{l}
\displaystyle I_1
\le\int_0^1(t-\tau)^{-a_2}\,d\tau\le c(t/2)^{-a_2};\\ [3mm]
\displaystyle I_2\le\int_1^{t-1}(t-\tau)^{-a_2}\tau^{-p\theta}\,d\tau=
t^{-a_2-p\theta+1}\int_{1/t}^{1-1/t}(1-s)^{-a_2}s^{-p\theta}\,ds\\ [3mm]\displaystyle\quad\le
ct^{-a_2-p\theta+1}=ct^{-\theta};\\ [3mm]
\displaystyle I_3\le\int_{t-1}^t(t-\tau)^{-a_1}\tau^{-p\theta}\,d\tau\le c(t/2)^{-p\theta}\le ct^{-\theta}.
\end{array}$$

To bound the second integral we use $a_2,p\theta<1$. We have seen that the first condition is true; as for the second condition this holds if $q< q_3=\frac{\alpha p}{\alpha p-p+1}$.

In order to obtain the desired estimate in the norm of $\mathcal{A}$, we need $a_2\ge\theta$, and this is verified since $\theta-a_2 =p\theta-1<0$. In summary, if we fix the exponent
\begin{equation}\label{room-q}
\max\{p,\varpi/\beta\}<q<\min\{q_1,q_2,q_3\},
\end{equation}
we have $I(t) \le cR^pt^{-\theta}$, which gives
$$
\|\Phi(v)\|_{\mathcal{A}}\le c(\varepsilon+\varepsilon^{\frac{1}q}+R^p)<R
$$
by taking $\varepsilon$ and $R$ small. This implies $\Phi(B_R)\subset B_R$. What is left is to check that there is room to choose the exponent $q$ in \eqref{room-q}.

\

First we have that $q_1>p$ is equivalent to $\frac{N}{N-\beta}>\frac{N+\varpi}{N}$, or $\beta>\frac{N\varpi}{N+\varpi}$; while $q_2>p$ and $q_3>p$ are trivial.

Notice that the previous restriction on $\beta$ implies
$\frac{\varpi}{\beta}\le 1+\frac{\varpi}{N}=p$. So, $q_i>{\varpi}/{\beta}$ is not a restriction.

\

\underline{\sc Step 2}. We now calculate, for $v_1,v_2\in B_R$,
$$
\begin{array}{l}
\displaystyle\|\Phi(v_1)(t)-\Phi(v_2)(t)\|_q\le\int_0^tc\left\|Y_{t-\tau}*(v_1^p(\tau)-v_2^p(\tau))\right\|_q\,d\tau \\ [2mm]
\displaystyle\quad\le\int_0^tc(t-\tau)^{-a(t-\tau)}\left(\|v_1(\tau)\|_q^{p-1}+\|v_2(\tau)\|_q^{p-1}\right)\|v_1(\tau)-v_2(\tau)\|_q\,d\tau \\ [2mm]
\displaystyle\quad\le\int_0^tc(t-\tau)^{-a(t-\tau)}2(R(1+\tau)^{-\theta})^{p-1}(1+\tau)^{-\theta}\|v_1-v_2\|_{\mathcal{A}}\,d\tau\\ [2mm]
\displaystyle\quad= cR^{p-1}\|v_1-v_2\|_{\mathcal{A}}\int_0^t(t-\tau)^{-a(t-\tau)}(1+\tau)^{-p\theta}\,d\tau\\ [2mm]
\displaystyle\quad\le  cR^{p-1}\|v_1-v_2\|_{\mathcal{A}}(1+t)^{-\theta},
\end{array}$$
exactly as before. Then $\Phi$ is contractive in $B_R$ if $R$ is small enough. Let $u$ be the unique fixed point of $\Phi$  in $B_R$.

\underline{\sc Step 3}. If $U\in C((0,T_0);X)$ is the solution constructed in Theorem~\ref{teo-exist}, by uniqueness we have $u=U$ for $0<t<T_0$, and thus
$$
u\in C((0,T_0);X)\cap L^\infty((0,\infty);L^q(\mathbb{R}^N).
$$
Assume for simplicity of the subsequent calculus that $T_0\ge2$. Let us now write the solution for $t>T_0$ through the expression
$$
\begin{array}{rl}
u(t)&\displaystyle=Z_t*u_0+\int_0^{1}Y_{t-\tau}*u^p(\tau)\,d\tau+\int_{1}^tY_{t-\tau}*u^p(\tau)\,d\tau \\ [3mm]
&\displaystyle=I_1(t)+I_2(t)+I_3(t).
\end{array}
$$
It is clear that $I_1$ is smooth and satisfies
$$
\|I_1(t)\|_\infty\le \|u_0\|_\infty,\qquad \|I_1(t)\|_q\le cM^{\frac{q-1}q}\varepsilon^{\frac{1}q}.
$$
As for the second integral,
$$
\begin{array}{l}
\displaystyle\|I_2(t)\|_\infty\le \sup_{0<\tau<1}\|u(\tau)\|_\infty^p\int_0^1c(t-\tau)^{\alpha-1}\,d\tau\le c\sup_{0<\tau<1}\|u(\tau)\|_\infty^p\\ [3mm]
\displaystyle\|I_2(t)\|_q\le\|u(t)\|_q\le R.
\end{array}
$$
Therefore
$$
I_1+I_2\in L^\infty((0,\infty);L^r(\mathbb{R}^N))
$$
for every $q\le r\le\infty$, and we can concentrate on estimating $I_3$.

\underline{\sc Step 4}. We now apply estimate \eqref{eq:u0yt} to the function $g(t)=u^p(t)\in L^{q/p}(\mathbb{R}^N)$. First, if $q/p>N/\beta$ we are done, since then
$$
\|I_3(t)\|_\infty\le \int_{1}^t(t-\tau)^{-1+\alpha-\delta(p/q,\infty;t-\tau)}R^p\tau^{-\theta p}\,d\tau\le c .
$$
On the contrary, if $q/p<N/\beta$, we have
$$
\|I_3(t)\|_r\le \int_{1}^t(t-\tau)^{-1+\alpha-\delta(p/q,r;t-\tau)}R^p\tau^{-\theta p}\,d\tau\le c,
$$
for every
$$
q\le r<\frac{Nq/p}{N-\beta q/p}=\frac{Nq}{Np-\beta q}.
$$
We get a smoothing effect $L^q\to L^r$. We now take $r_0=q$, $r_k=\chi r_{k-1}$ with $1<\chi <\frac{N}{Np-\beta q}$ fixed, and repeat the procedure, getting in each step the smoothing effect $L^{r_{k-1}}\to L^{{r_k}}$. Observe that at each step
$$
r_{k-1}< r_k<\frac{N r_{k-1}}{Np-\beta r_{k-1}}.
$$
We stop when we reach a value $k_*\ge1$ for which $\chi^{k_*} q>Np/\beta$, thus giving the smoothing effect $L^{r_{k^*}}\to L^\infty$. This implies  $u$ bounded.
\end{proof}

\

\appendix
\section{The nonlocal heat equation}\label{app-A}
\setcounter{equation}{0}
\renewcommand{\theequation}{A.\arabic{equation}}

In this appendix we review some easy estimates for the fundamental solution of the nonlocal heat equation without memory, which is used through a subordination formula, to study the problem with memory. That is, we characterize the function $G_t$ solution to
\begin{equation}\label{zeta}
\left\{
\begin{array}{ll}
(\partial_t +\mathcal{L}_x) G_t=0,\qquad & \text{in } Q,\\
G_0=\delta,
\end{array}\right.
\end{equation}
where $\delta$ is the Dirac mass at the origin. Since the Fourier transform is $\widehat G_t(\xi)=e^{-m(\xi)t}$,  we first need to study the symbol $m(\xi)$ of the diffusion operator $\mathcal{L}_x$. See for example \cite{KassmannMimica17,BrandledePablo18}.

\subsection{The symbol of $\mathcal{L}_x$}\label{sec-symbol}

Let $m(\xi)$ be the symbol given by \eqref{symbol}.
We first observe that it is given  by the L\'evy-Khintchine formula
\begin{equation}\label{kinchin}
m(\xi)=\int_{\mathbb{R}^N}(1-\cos(z\xi))\mathcal{J}(z)\,dz.
\end{equation}
It is  radial  since the kernel $\mathcal{J}$ is radial.
Without imposing any condition on $\mathcal{J}$, apart of being of L\'evy type, we have the trivial estimates $m(0)=0$ and
\begin{equation}\label{minmax}
c_1\min\{1,|\xi|^2\}\le m(\xi)\le c_2\max\{1,|\xi|^2\}.
\end{equation}

In the case of an integrable kernel $\mathcal{J}$ we have $m(\xi)=\|\mathcal{J}\|_1-\widehat{\mathcal{J}}(\xi)$, and then $m( \xi)\le c$. This implies that $\widehat{G}_t\notin L^\sigma(\mathbb{R}^N)$ for any $\sigma<\infty$, $t>0$.

Assume then $\mathcal{J}\notin L^1(B_1)$. If $\mathcal{J}$ is weakly nonintegrable, i.e., $|z|^N\mathcal{J}(z)\le c$ for $|z|\le1$, then using \eqref{J0} we get for $|\xi|>1$
$$
\begin{array}{rl}
m(\xi)&\displaystyle\le \int_{|z|\le|\xi|^{-1}}\frac{c|\xi|^2|z|^2}{|z|^N}\,dz+\int_{|\xi|^{-1}<|z|<1}\frac{c}{|z|^N}\,dz+\int_{|z|\ge1}\mathcal{J}(z)\,dz\\ [3mm]
&\displaystyle\le c_1+c_2\log|\xi|+c_3,
\end{array}
$$
a growth that is again not enough to get $\widehat{G}_t$ in any $L^\sigma(\mathbb{R}^N)$, $\sigma<\infty$ provided $t$ is small. It is then crucial to have a stronger singularity.

Assuming then a power singularity, i.e. condition  \eqref{J3}, we have, by a simple change of variables
\begin{equation}
  \label{rho>1}
  \begin{array}{rl}
m(\xi)&\displaystyle=|\xi|^{-N}\int_{\mathbb{R}^N}(1-\cos z_1)\mathcal{J}(z/|\xi|)\,dz \\ [3mm]
&\displaystyle\ge c|\xi|^{\beta}\int_0^1\frac{1-\cos w}{w^{1+\beta}}\,dw = c|\xi|^{\beta},
\end{array}
\end{equation}
for $|\xi|>1$.
As to the behaviour at the origin of $m$, we have that \eqref{J1} implies
$$
\begin{array}{rl}
m(\xi)&\displaystyle\le \int_{|z|\le1}|\xi|^2|z|^2\mathcal{J}(z)\,dz+\int_{1<|z|<|\xi|^{-1}}\dfrac{|\xi|^2|z|^2}{|z|^{N+\gamma}}\,dz\\ [3mm]
&\displaystyle
+\int_{|z|\ge|\xi|^{-1}}\frac1{|z|^{N+\gamma}}\,dz\le c_1|\xi|^2+c_2\Big||\xi|^2-|\xi|^\gamma\Big|+c_3|\xi|^\gamma.
\end{array}
$$
Thus
\begin{equation}
  \label{rho2<1}
m(\xi)\le c|\xi|^{\overline\gamma}\qquad|\xi|\le1,\quad\overline\gamma=\min\{\gamma,2\}.
\end{equation}
On the contrary, hypothesis  \eqref{J2} implies
$$
m(\xi)=\int_{\mathbb{R}^N}(1-\cos(z\xi))\mathcal{J}(z)\,dz\ge c\int_{c_1|\xi|^{-1}}^{c_2|\xi|^{-1}}|\xi|^2r^{1-\omega}\,dr=c|\xi|^{\omega}.
$$
 for $|\xi|\le1$. Using \eqref{minmax} we deduce
\begin{equation}
  \label{rho<1}
m(\xi)\ge c|\xi|^{\varpi}\qquad |\xi|\le1,\quad \varpi=\min\{\omega,2\}.
\end{equation}
In summary, assuming hypotheses \eqref{J1} (and \eqref{J0}) we get
\begin{equation}
  \label{estimates-m0}
m(\xi)\le\left\{\begin{array}{cl}
c_1|\xi|^{\overline\gamma}&\quad\text{if } |\xi|\le 1, \\ [2mm]
c_2|\xi|^2&\quad\text{if } |\xi|\ge 1.
\end{array}\right.
\end{equation}
and also assuming \eqref{J2} and \eqref{J3}
\begin{equation}
  \label{estimates-m}
m(\xi)\ge\left\{\begin{array}{cl}
c_3|\xi|^\varpi&\quad\text{if } |\xi|\le 1, \\ [2mm]
c_4|\xi|^\beta&\quad\text{if } |\xi|\ge 1.
\end{array}\right.
\end{equation}

\subsection{The heat kernel $G_t$}

Let $\widehat{G}_t(\xi)=e^{-m(\xi)t}$. Clearly $0<\widehat G_t\le1$. Also, an easy comparison principle gives $G_t>0$. In fact, if $w$ is any smooth function satisfying
$$
\left\{
\begin{array}{ll}
(\partial_t +\mathcal{L}_x) w=0,\qquad & \text{in } Q,\\
w(x,0)\ge0,
\end{array}\right.
$$
and if $w(x_0,t_0)=0$ is a minimum of $w$ we would have
$$
\partial_tw(x_0,t_0)\le0
$$
and
$$
\mathcal{L}_x w(x_0,t_0)=\int_{\mathbb{R}^N}(w(x_0,t_0)-w(z,t_0))\mathcal{J}(x_0-z)\,dz<0,
$$
which is a contradiction, and it implies $w>0$ in $Q$. As a corollary $\|G_t\|_1=\widehat{G}_t(0)=1$ for every $t>0$.

Let us estimate now the $L^q$ norm of $G_t$ for $q>1$ using the estimates \eqref{estimates-m} on the symbol $m(\xi)$.

\begin{prop}\label{prop-Gt}
Assume hypotheses \eqref{J2} and \eqref{J3}. For any $1\le q\le\infty$ it holds
\begin{equation}\label{estim-Gt}
  \|G_t\|_q\le \left\{\begin{array}{ll}
  ct^{-\frac N\beta(1-1/q)}&\text{ if } t\le1,\\ [2mm]
  ct^{-\frac N\varpi(1-1/q)}&\text{ if } t\ge1.\end{array}
  \right.
\end{equation}
\end{prop}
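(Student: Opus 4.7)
The plan is to reduce the estimate for general $q$ to the endpoint cases $q=1$ and $q=\infty$ and interpolate. The $q=1$ endpoint is already established ($\|G_t\|_1=1$), so the whole proof hinges on an $L^\infty$ bound with the two announced rates.

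For the $L^\infty$ bound, I would use Fourier inversion: since $\widehat G_t(\xi)=e^{-m(\xi)t}$ and $m(\xi)\ge c|\xi|^\beta$ for $|\xi|\ge1$ by \eqref{estimates-m}, we have $\widehat G_t\in L^1(\mathbb{R}^N)$, hence
\[
\|G_t\|_\infty\le (2\pi)^{-N}\int_{\mathbb{R}^N}e^{-m(\xi)t}\,d\xi
=(2\pi)^{-N}\!\left(\int_{|\xi|\le1}+\int_{|\xi|\ge1}\right)\!e^{-m(\xi)t}\,d\xi.
\]
Now I would insert the lower bounds from \eqref{estimates-m}: on $|\xi|\le1$ replace $m$ by $c|\xi|^\varpi$, on $|\xi|\ge 1$ replace it by $c|\xi|^\beta$. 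Passing to polar coordinates and substituting $r=s\,t^{-1/\varpi}$ (resp.\ $r=s\,t^{-1/\beta}$) the two integrals become, up to constants,
\[
t^{-N/\varpi}\!\int_0^{t^{1/\varpi}}\!\!s^{N-1}e^{-cs^\varpi}ds
\;+\;t^{-N/\beta}\!\int_{t^{1/\beta}}^{\infty}\!\!s^{N-1}e^{-cs^\beta}ds.
\]
For $t\le 1$ the first integrand is bounded (integration over a shrinking set gives $O(1)$) while the second integral equals a finite constant, so the second term $t^{-N/\beta}$ dominates. For $t\ge 1$ the first term converges to $c\,t^{-N/\varpi}$ while the second is exponentially small. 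This yields
\[
\|G_t\|_\infty\le \begin{cases} c\,t^{-N/\beta} & t\le1,\\ c\,t^{-N/\varpi} & t\ge1.\end{cases}
\]

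To finish, I would interpolate between $\|G_t\|_1=1$ and the $\|G_t\|_\infty$ bound via the elementary inequality $\|G_t\|_q\le\|G_t\|_\infty^{1-1/q}\|G_t\|_1^{1/q}$, which gives exactly \eqref{estim-Gt} in both regimes $t\le 1$ and $t\ge 1$, for every $1\le q\le\infty$.

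The only delicate point is the lower bound for $m$ on the unit ball: I am invoking $m(\xi)\ge c|\xi|^\varpi$ for $|\xi|\le 1$, which in turn requires hypothesis \eqref{J2} (and $\varpi=\min\{\omega,2\}$ absorbs the second-moment case when $\omega>2$ via \eqref{minmax}). The complementary lower bound $m(\xi)\ge c|\xi|^\beta$ for $|\xi|\ge1$ is exactly the content of \eqref{J3} and is what makes $\widehat G_t$ integrable at infinity in the first place; without it one could not even start the Fourier-inversion argument. Apart from these two inputs, the proof is a direct scaling computation and an interpolation.
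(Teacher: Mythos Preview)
Your proof is correct and follows essentially the same approach as the paper: bound $\|G_t\|_\infty$ via Fourier inversion, split the frequency integral at $|\xi|=1$, insert the symbol lower bounds from \eqref{estimates-m}, rescale each piece, and then interpolate with $\|G_t\|_1=1$. The only cosmetic difference is the choice of substitution variable (you use $r=s\,t^{-1/\varpi}$, $r=s\,t^{-1/\beta}$ while the paper sets $z=cr^\varpi t$, $z=cr^\beta t$), but the resulting estimates and the interpolation step are identical.
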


\begin{proof}
First
$$
\|G_t\|_\infty\le c\int_{\mathbb {R}^N}e^{-m(\xi)t}\,dx\le c\int_0^1e^{-cr^\varpi t}r^{N-1}\,dt+c\int_1^\infty e^{-cr^\beta t}r^{N-1}\,dt=I_1+I_2.
$$
$$
I_1\le  ct^{-\frac N\varpi}\int_0^te^{-z}z^{\frac N\varpi-1}\,dz\le c\left\{
\begin{array}{ll}
1&\text{ if } t\le1, \\ [2mm]
t^{-\frac N\varpi}&\text{ if } t\ge1,
\end{array}
\right.
$$
$$
I_2\le ct^{-\frac N\beta}\int_t^\infty e^{-z}z^{\frac N\beta-1}\,dz\le c\left\{
\begin{array}{ll}
t^{-\frac N\beta}&\text{ if } t\le1, \\ [2mm]
t^{-1}e^{-t}&\text{ if } t\ge1,
\end{array}
\right.
$$
Thus
$$
\|G_t\|_\infty\le c\left\{
\begin{array}{ll}
t^{-\frac N\beta}&\text{ if } t\le1, \\ [2mm]
t^{-\frac N\varpi}&\text{ if } t\ge1.
\end{array}
\right.
$$
The result follows by interpolation with $\|G_t\|_1=1$.
\end{proof}

This estimate should be complemented with a pointwise estimate, in the spirit of \cite{BlumenthalGetoor60,PruittTaylor69}. This only works  in the case $\beta=\omega\in (0,2)$, see  \cite{BassLevin02}, thus getting
$$
G_t(x)\le c\min\{t^{-N/\beta},t|x|^{-N-\beta}\}.
$$

\section*{Acknowledgments}

Work supported by the Spanish project  PID2020-116949GB-I00. The first author was also supported by Grupo de Investigaci\'on UCM 920894.



\end{document}